\newtheorem{theorem}{Theorem}[section]
\newtheorem{lemma}[theorem]{Lemma}
\newtheorem{proposition}[theorem]{Proposition}
\newtheorem{remark}[theorem]{Remark}
\def\Xint#1{\mathchoice
{\XXint\displaystyle\textstyle{#1}}%
{\XXint\textstyle\scriptstyle{#1}}%
{\XXint\scriptstyle\scriptscriptstyle{#1}}%
{\XXint\scriptscriptstyle\scriptscriptstyle{#1}}%
\!\int}
\def\XXint#1#2#3{{\setbox0=\hbox{$#1{#2#3}{\int}$ }
\vcenter{\hbox{$#2#3$ }}\kern-.6\wd0}}
\def\dashint{\Xint-}
\newcommand{\R}{{\mathbb R}}
\newcommand{\mO}{\mathcal{O}}
\newcommand{\mS}{\mathcal{S}}
\newcommand{\mJ}{\mathcal{J}}
\newcommand{\mR}{\mathcal{R}}
\newcommand{\M}{\mathcal{M}}
\newcommand{\rp}{\mathrm{p}}
\renewcommand{\d}{\mathrm{d}}
\newcommand{\na}{{\nabla}}
\newcommand{\eps}{{\varepsilon}}
\def\div{ \mathrm{div} \,}
\def\Xint#1{\mathchoice
{\XXint\displaystyle\textstyle{#1}}%
{\XXint\textstyle\scriptstyle{#1}}%
{\XXint\scriptstyle\scriptscriptstyle{#1}}%
{\XXint\scriptscriptstyle\scriptscriptstyle{#1}}%
\!\int}
\def\XXint#1#2#3{{\setbox0=\hbox{$#1{#2#3}{\int}$ }
\vcenter{\hbox{$#2#3$ }}\kern-.6\wd0}}
\def\dashint{\Xint-}
\title{Derivation of an effective rheology for dilute suspensions of micro-swimmers.} 
\author{Alexandre Girodroux-Lavigne}
\begin{document}
\maketitle
\begin{abstract}
We provide a rigorous derivation of an effective rheology for dilute viscous suspensions of self-propelled particles. We derive at the limit an effective Stokes system with two additional terms. The first term corresponds to an effective viscosity that matches the famous Einstein's formula $\mu_{eff}=(1+ \frac{5}{2} \lambda)\mu$. The second additional term derived is an effective \textit{active stress} $\sigma_1$ that appears as a forcing term $\lambda \, \div \sigma_1$ in the fluid equation. We then investigate the effects of this term depending on the micro-swimmers nature and their distribution in the fluid.
\end{abstract}
\begin{keywords}
micro-swimmers suspensions, rheology, Stokes equations.
\end{keywords}
\medskip

\noindent
\begin{MSC}
35Q35, 76T20, 76D07.
\end{MSC}
\section{Introduction}

Active suspensions analysis is a subject of growing interest. Broadly speaking, an active suspension is a large collection of swimming particles immersed into a fluid. These particles, sometimes referred to as \textit{micro-swimmers}, convert chemical energy into mechanical energy through a swimming mechanism. Classic examples and applications arise in biological fluids (cytoplasm, spermatozoon), micro-algae or bacteria suspensions \cite{sokolov2009enhanced,kim2007use, yasa2018microalga} and artificial micro-machines \cite{paxton2004catalytic}. Moreover, active fluids display unique experimental behaviors like pattern formations, chemotaxis, unsteady whirls and jets, exotic rheology and others that still lack theoretical understanding. \medskip

 Specifically, the \textit{rheology of active fluids} has been intensively studied in the last years. Depending  the nature of the particles, the suspension viscosity can either \textit{increase} or \textit{decrease} due to the particles activity. In \cite{hatwalne2004rheology}, a coarse-grained simple model is proposed to predict this fluid viscosity modification. A few years later, Aranson and Sokolov measured for the first time the shear viscosity of a bacterial suspension (\textit{Bacillus Subtilis}) and witnessed a drastic viscosity reduction, which pointed out a clear contrast when compared with a solution of immobile particles, see \cite{sokolov2009reduction}. Other experiments in that sense were achieved in \cite{rafai2010effective, gachelin2013non,lopez2015turning,chui2021rheology}. Following Aranson and Sokolov results, many theoretical studies proposed models to explain these viscosity changes. In all of these studies, some \textit{anisotropy on the particles orientation distribution} is needed to observe this deviated rheology. The origin of this anisotropy depends on the model studied, and is sometimes directly assumed. A broad panorama of the literature is given below.

\begin{itemize}
    \item A simplified two-dimensional model studied by Haines and al in \cite{haines2008effective} represents particles self-propulsion effect on the fluid by a Dirac source term located near each particle's boundary. On the other side, the balance of forces on each particle is modified through this  pushing (or pulling) force. Assuming a background shear-flow and the alignment of particles in the flow, they pointed out a decrease of the effective shear viscosity. This description of a micro-swimmer will be our starting point and we refer to Subsection \ref{subsection::our_model} for more details.
    
s    \item The previous model was adapted to the three-dimensional case in \cite{haines2009three, haines2011effective} to study ellipsoidal bacteria. There,  \textit{tumbling} and \textit{rotational diffusion} of the bacteria are taken into account, which allows to compute a solution of the steady Fokker-Planck equation solved by the probability density of the particles. This solution leads to some anisotropy in the particles orientation distribution, which enables to compute the effective viscosity changes in both pure shear and pure straining flows.
    
    \item In \cite{gyrya2011effective}, the active particle membrane is separated into two parts. The fluid sticks to the first part, and a tangential traction modelling the activity is prescribed on the second part. In the \textit{dilute regime}, this model leads to a uniform orientation distribution of the particles and it is shown that the obtained effective viscosity is not affected by self-propulsion. In the \textit{semi-dilute regime}, swimmer-swimmer interactions are taken into account, disrupting the rotational behavior of the particles.  Numerical simulations then show a global decrease of the effective shear viscosity.
    
    \item A new way to describe the particle flagellum is proposed in \cite{decoene2011microscopic}. The self-propulsion exerts a force on a very elongated ellipse outside the particle that represents its flagellum. This modeling was then used in \cite{decoene2019modelisation, chibbaro2021irreversibility} to numerically compute the modification of the effective shear viscosity in the semi-dilute regime.

     \item In the work of Pedley \cite{ pedley2016spherical}, a new model of active particles is introduced: the \textit{steady squirmer}. This particle moves through mucus absorption and secretion which results in a prescribed relative velocity on the particle surface. The rheology of a \textit{squirmer} suspension is then analysed in \cite{ishikawa2007rheology, ishikawa2021rheology}. Using two numerical methods, \textit{Stokesian dynamics} and \textit{lubrication theory}, the effective shear viscosity of the suspensions is computed in the semi-dilute regime. In this study, the anisotropy is a consequence of two effects: particles interaction and bottom heaviness (along with gravity) of the particles.
    \item It is well-known that the disturbance flow induced by a single active particle in a simple Stokes flow behaves as a force dipole in the far field\footnote{Note that this approximation was experimentally justified in reference \cite{drescher2011fluid}.}.  Based on this dipole representation, Saintillan introduced a kinetic description to understand active suspensions in \cite{saintillan2008instabilities}. Assuming some rotational diffusion, the probability density solution of the Fokker-Planck equation is numerically computed in \cite{saintillan2010dilute} and used to show shear thinning and shear thickening depending on the particles nature. Further studies on this model were achieved in \cite{saintillan2010extensional,chen2013global,saintillan2015theory,alonso2016microfluidic} and we refer to \cite{saintillan2018rheology} for a complete review.
\end{itemize}
 A few other articles deal with hydrodynamic-interactions and their impact on rheology in semi-dilute regime, see \cite{ryan2011viscosity,potomkin2016effective,zhang2021effective}.  More complicated settings were also recently studied, like micro-swimmers with bending flagella \cite{potomkin2017flagella} or micro-swimmers in \textit{nematic liquid crystals \cite{chi2020surface}, \textit{viscoelastic fluids} \cite{li2021microswimming} or in \textit{emulsions} \cite{favuzzi2021rheology}.} \black

\subsection{Model and assumptions}
\label{subsection::our_model}
We consider $N$ spherical particles $B_i:=\overline{B(x_i,a)}$ of small radius $a$ and centered at $x_{i}$ for $ 1 \leq i \leq N$, all included in a fixed compact of $\R^3$ independant of $N$. Note that the balls centers $x_i$ and radius $a$ depend on $N$ but we omit it in the notation. These particles are immersed in a Stokes flow of constant viscosity $\mu$. Each particle $B_i$ is being pushed (or pulled) by a swimming mechanism along the direction $\rp_i$ in the three-dimensional sphere  $\mS^2$, using for example a flagellum. To take this phenomenon into account, we represent this self-propulsion using a force monopole as in the model presented in \cite{haines2009three,haines2011effective}. Denoting 
\begin{align*}
    \Omega_N := \R^3 \setminus \cup_{i=1}^N B_i, \quad \text{the fluid domain,}
\end{align*}
the equations for the fluid velocity $u_N$ and its associated pressure $p_N$ are:
\begin{equation}
    \left\{
      \begin{aligned}
         - \mu \Delta u_N + \na p_N & = g_N - k_f \sum_{i=1}^N \delta(x - x_{f,i})  \, \rp_i   &&\text{ in } \Omega_N,\\
         \div  u_N &  = 0  &&\text{ in } \Omega_N,\\
         u_N(x) & = U_i +V_i \times (x-x_i) &&\text{ in } B_i,\\
         \underset{|x| \rightarrow + \infty}{ \lim} u_N(x) &  = 0,
      \end{aligned}
    \right.
    \label{system::main}
\end{equation}
where $g_N$ represents some forcing that converges toward some $g$ as $N$ grows. The action of particle $B_i$ on the fluid is a point force with intensity $k_f \in \R$ concentrated at point $x_{f,i}:= x_i + a \beta \rp_i$ with $\beta>1$, corresponding to a point close to the particle. The no-slip boundary condition on each ball $B_i$ involves the translation and rotation velocities $(U_i,V_i)$ on $B_i$. These constant vector fields are unknown but are determined through the balance of forces and torques for each particle $B_i$:
\begin{align*}
    & \int_{\partial B_i}\sigma(u_N,p_N){} n + k_f \rp_i  + \int_{B_i} g_N = 0\\
    &  \int_{ \partial B_i} \sigma(u_N,p_N) n \times (x-x_i) + \int_{B_i} g_N \times (x-x_i)=0,
\end{align*}
where $\sigma(u,p):= 2 \mu D(u) - p Id$ is the standard Cauchy stress tensor and $n$ is the normal vector pointing outward $B_i$. \medskip

\noindent \textbf{Assumptions.} The intensity $k_f$ of the pushing/pulling is chosen to scale like the particle effective surface, e.g:
\begin{equation}
    \label{assumption::force}
    k_f:= \alpha \pi \mu a^2
\end{equation}
where the sign of $\alpha \in \R$ determines whether it is a pulling or pushing swimmer. Indeed, if it is a pusher particle, which propels itself by exerting an outward force near its tail, we have $\alpha<0$ as the fluid is being pushed away from the particle. For a puller particle that swims using his arms to pull itself forward, one has $\alpha>0$. In the figures below, the small arrows indicate the direction of the pushing/pulling force $- k_f \delta_{x_{f,i}} \rp_i$ and the bigger ones marks the self-propulsion $k_f \rp_i$ on the particle.
\definecolor{gray1}{gray}{0.85}
\definecolor{gray2}{gray}{0.67}
\definecolor{gray3}{gray}{0.55}

\begin{minipage}{0.45\textwidth}
    \begin{figure}[H]
    \centering
    \includegraphics[scale=0.4]{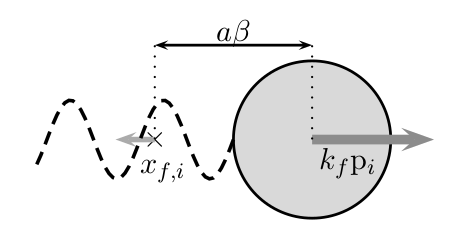}
    \caption{Pusher type particle, $k_f <0$.}
    \label{Geometry 1}
\end{figure}
\end{minipage}
\hspace{4ex} 
\begin{minipage}{0.45\textwidth}
\begin{figure}[H]
    \centering
   \includegraphics[scale=0.4]{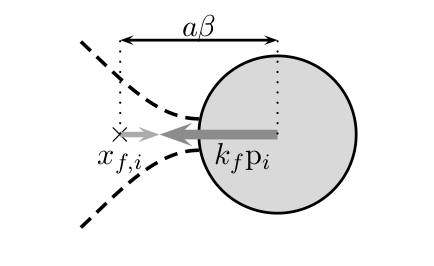}
    \caption{Puller type particle, $k_f>0$.}
    \label{Geometry 2}
\end{figure}
\end{minipage}\\ \medskip

\noindent We assume the following \textit{empirical measure convergence}:
\begin{equation}
    \tag{H1}
    f_N :=\frac{1}{N} \sum_{i= 1}^N \delta_{(x_i, \rp_i)} \underset{N \rightarrow + \infty}{\rightharpoonup} f(x,\rp) \, \d x \d \rp \quad \text{in the sense of measures,}
    \label{assumption::empirical_measure}
\end{equation}
where $f \in L^\infty(\R^3 \times \mS^2, \R^+)$ is a bounded density. Furthermore, we assume there exists a smooth bounded open set $\mO$ of $\R^3$ such that, for any $\rp \in \mS^2$, function $f(\cdot,\rp)$ has a support in $\overline{\mO}$. For simplicity, we suppose that $\mO$ is of volume one. Denoting 
\begin{align*}
    \rho(x):= \int_{\mS^2} f(x,\rp) \, \d \rp, \quad \text{the particles spatial density, }
\end{align*}
we easily show that $\eqref{assumption::empirical_measure}$ implies
\begin{align}
    \tag{H1'}
    & \delta_N:=\frac{1}{N} \sum_{i= 1}^N \delta_{x_i} \underset{N \rightarrow + \infty}{\rightharpoonup} \rho(x) \, \d x \quad \text{in the sense of measures,}
    \label{assumption::empirical_measure_spatial}\\
    & \mathrm{support}(\rho) \subset \overline{\mO}, \quad \rho \in L^\infty(\R^3).  \nonumber
\end{align}
We  investigate the \textit{dilute suspensions regime,} which means the volume fraction  $\lambda:= \frac{4}{3} \pi a^3 N$ is small, but independent of $N$. Hence, the particles radius $a$ scales like $N^{- \frac{1}{3}}$.\medskip

\noindent We also make the following \textit{separation assumption}: there exists $c>0$ such that for any $N>0$, we have
 \begin{equation}
    \tag{H2}
     \underset{i \neq j}{\min} \, |x_i - x_j| \geq c N^{-1/3}.
    \label{assumption::separation}
\end{equation}
Our analysis in Section \ref{section::Relaxed Assumption} demonstrates how this assumption may be relaxed, using arguments from $\cite{GVRH}$. Note that to ensure that all the forces application points $x_{f,i}$ are indeed in the fluid domain $\Omega_N$, we need to verify $ \underset{i \neq j}{\min} \, |x_i - x_j| \geq 2 \beta a$. In the light of assumption \eqref{assumption::separation}, it is enough to satisfy the following inequality:
\begin{equation}
    1<\beta< \frac{c}{2} \Big(\frac{4 \pi}{3} \Big)^{1/3} \lambda^{-1/3},
    \label{assumption::separation_beta}
\end{equation}
which is automatically satisfied for small enough $\lambda$.\medskip

\noindent \textbf{Aim of the study.} As $N \rightarrow + \infty$, the hope is to replace \eqref{system::main} by a Stokes system without particles, with a new viscosity $\mu'$ in the domain $\mO$ and a new source term $f'$, and find an asymptotic development when $\lambda$ is small:
\begin{equation}
    \mu' = \mu Id +  \lambda  \mu_1 I_d + \lambda^2 \mu_2+ \dots \quad  \text{and} \quad f' = g+  \lambda f_1 + \lambda^2 f_2 + \dots
    \label{intro::dev effectif}
\end{equation}
More precisely, we want to show that, for $N$ very large and at small volume fraction $\lambda$, the solution $u_N$ of \eqref{system::main} is $o(\lambda)$-close to the solution $\bar{u}$ of the following effective fluid model
\begin{equation}
    \left\{
      \begin{aligned}
         - 2 \mu \, \div ([ 1+\frac{5}{2} \rho \lambda ] D(\bar{u}))+ \na \bar{p} & = g + \lambda \, \div{} \sigma_1 && \ \text{in} \ \R^3,\\
         \div \bar{u} & = 0 && \text{ in } \R^3,\\
         \underset{|x| \rightarrow + \infty}{ \lim} \bar{u}(x)  & = 0
      \end{aligned}
    \right.
    \label{system::intro_effective_model}
\end{equation}
where $\sigma_1$ is a matrix-valued function defined on the whole space by
\begin{equation}
    \sigma_1(x)= \alpha \mJ \int_{\mS^2} (\rp \otimes \rp-\frac{1}{3} Id) f(x,\rp) \d \rp, \quad \text{for some} \quad \mJ >0 .
    \label{first def:: sigma_1}
\end{equation}
In the above system, the constant fluid viscosity $\mu$ has been replaced by $(1+\frac{5}{2} \rho \lambda) \mu$. This new viscosity corresponds to the famous Einstein first-order expansion of the effective viscosity induced by a rigid spheres suspension. This effective viscosity was introduced for the first time in \cite{Einstein}, and was since then the inspiration for many studies, see \cite{Haines&Mazzucato,GVH,HillairetWu,DueGloria, GVRH} among many. In our model, this effective viscosity encodes the particles influence as if they were passive rigid bodies. \medskip

\noindent The term $\sigma_1$ can be understood as an additional stress due to the particles self-propulsion. A similar term called \textit{active stress tensor} arised in many publications such as \cite{saintillan2008instabilities,saintillan2010dilute, saintillan2018rheology, rafai2010effective, potomkin2016effective, alonso2016microfluidic,degond2019coupled} and is reminiscent of Batchelor's primal work \cite{batchelor1971stress}. In most of them, this active stress tensor is due to some averaging of the force dipoles produced by each micro-swimmer. However, no rigorous derivation or convergence toward effective models were proposed. Indeed, the history of the mathematical study of active suspensions is scarcer than for passive suspensions. \medskip

\noindent Our objective is two-fold:
\begin{itemize}
    \item to justify the materialization of particles self-propulsion by this active stress tensor $\sigma_1$ and to understand its effects on the system
    \item rigorous homogenization theory for active suspensions has yet to be performed. Proving that $u_N$ is close as $N \rightarrow + \infty$ to the effective fluid model satisfied by $\bar{u}$, up to an error of order $\lambda$, is a first step in this direction. 
\end{itemize} \medskip

\noindent \textbf{Main results.} One of the main difficulties in studying the limit of system \eqref{system::main} is the presence of the irregular Dirac source terms. The first issue is to bound the solution in a suitable space. This is the purpose of the following Theorem

\begin{theorem}
\label{thm::u_N_bornée}
Let $\lambda>0$.  Assume that $g_N \rightarrow g$  in $L^{\frac{6}{5}}(\R^3) \cap L^{3+\eps}(\R^3)$, for some $\eps >0$, and assume that \eqref{assumption::empirical_measure} and \eqref{assumption::separation} hold. Then for all compact $K$, there exists a constant $C>0$ independent of $N$ such that 
\begin{align*}
   ||u_N||_{L^p(K)} \leq C \quad \text{for any} \quad 1<p< \frac{3}{2}.
\end{align*}
\end{theorem}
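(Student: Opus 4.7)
The plan is to decompose $u_N$ into a singular part that carries the Stokeslet-type singularities coming from the $N$ point-force sources, and a regular remainder amenable to an $H^1$-energy estimate. For each $i$, I introduce the \emph{single-swimmer} solution $W_i$ of the Stokes system in $\R^3 \setminus B_i$ with source $-k_f \delta_{x_{f,i}} \rp_i$, satisfying a rigid-body no-slip condition on $\pa B_i$ with (unknown) velocities $(A_i, \Omega_i)$, force-and-torque balance $\int_{\pa B_i} \sigma(W_i) n + k_f \rp_i = 0$ and $\int_{\pa B_i} \sigma(W_i) n \times (x - x_i) = 0$, and $W_i \to 0$ at infinity. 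The crucial feature is that the total force exerted by the swimmer on the fluid vanishes, so $W_i$ has no Stokeslet term in its far field and decays as a dipole of size $k_f a / |x - x_i|^2 \sim a^3 / |x - x_i|^2$; near the point force it retains the Stokeslet singularity $k_f / |x - x_{f,i}| \sim a^2/|x - x_{f,i}|$. I then decompose $u_N = v_N^{(1)} + v_N^{(2)}$ with $v_N^{(1)} := \sum_{i=1}^N W_i$, extended by its rigid value inside each $B_i$.

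For the singular part, Minkowski's inequality gives $\|v_N^{(1)}\|_{L^p(K)} \leq \sum_i \|W_i\|_{L^p(K)}$. Splitting $\|W_i\|_{L^p(K)}^p$ into the near-field ball $|x - x_{f,i}| < \eta/2$ (with $\eta = c N^{-1/3}$ the minimal inter-particle distance from \eqref{assumption::separation}, on which the Stokeslet bound applies) and its complement in $K$ (on which the dipole bound applies), an elementary computation yields
\[
  \|W_i\|_{L^p(K)}^p \lesssim a^{2p} \eta^{3-p} + a^{3p} R^{3-2p}, \qquad R := \mathrm{diam}(K).
\]
Since $a \sim \eta \sim N^{-1/3}$, for $1 < p < 3/2$ the second term dominates and yields $\|W_i\|_{L^p(K)} \lesssim N^{-1}$, so $\|v_N^{(1)}\|_{L^p(K)} \lesssim N \cdot N^{-1} = 1$ uniformly in $N$. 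The threshold $p < 3/2$ is exactly what makes this crude Minkowski bound converge: for $p \geq 3/2$ the far-field integral fails to be summable and a finer cancellation-based argument would be required.

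The regular remainder $v_N^{(2)} = u_N - v_N^{(1)}$ satisfies a Stokes system in $\Omega_N$ with forcing $g_N$ and \emph{no} Dirac sources, with boundary data on each $\pa B_i$ given by
\[
  (U_i - A_i) + (V_i - \Omega_i) \times (x - x_i) - \sum_{j \neq i} W_j|_{\pa B_i},
\]
and with force/torque balances inherited from those of $u_N$ (now driven only by $g_N$). The cross-interaction traces $W_j|_{\pa B_i}$, $j \neq i$, are smooth and of pointwise size $O(a^3/\eta^2) = O(N^{-1/3})$ by the dipole decay of $W_j$ combined with \eqref{assumption::separation}, so their $L^2(\cup \pa B_i)$-contribution vanishes as $N \to \infty$. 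A divergence-free Bogovskii lifting of this boundary data across the balls then reduces the problem to a classical Stokes energy estimate, giving $\|\na v_N^{(2)}\|_{L^2(\R^3)} \leq C$ uniformly in $N$. By Sobolev embedding $H^1(\R^3) \hookrightarrow L^6(\R^3)$, $v_N^{(2)} \in L^p(K)$ for all $p \leq 6$, which combined with Step~2 gives the theorem. The main obstacle is to make the energy estimate in this last step genuinely uniform in $N$: one must sum the cross-interactions $\sum_{j \neq i} W_j|_{\pa B_i}$ in the correct trace norms, and prove \emph{a priori} uniform bounds on the rigid-body velocities $(U_i, V_i)$ of $u_N$ (which enter the lifting). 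These estimates use the dilute, well-separated regime \eqref{assumption::empirical_measure}--\eqref{assumption::separation} and are reminiscent of the techniques developed for passive suspensions in \cite{Haines&Mazzucato,HillairetWu,GVRH}.
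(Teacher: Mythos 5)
Your decomposition is in substance the paper's: the single-swimmer fields $W_i$ are exactly the elementary solutions $v[\rp_i](\cdot-x_i)$ of \eqref{system::elementary_active_problem}, so $v_N^{(1)}$ is the dilute approximation $u_N^{app}$ of \eqref{def::u_N^app_regime_dilué}, and $v_N^{(2)}$ is the sum of the passive solution of \eqref{system::main_passive} and the error field $v_N$ of \eqref{systeme::error_v_N}. For the singular part your argument is correct and genuinely more elementary than the paper's: the force balance does kill the monopole (this is confirmed by the explicit formula of Lemma \ref{lemma::construction de v[p]} and the expansion \eqref{taylor_expansion_v[p]}), the pointwise bounds $|W_i|\lesssim a^2|x-x_{f,i}|^{-1}$ near the force point and $\lesssim a^3|x-x_i|^{-2}$ at distances $\gtrsim a$ are valid, and the triangle-inequality computation indeed gives $\|W_i\|_{L^p(K)}\lesssim N^{-1}$ for $1<p<3/2$, hence a uniform bound on $\sum_i W_i$; the paper instead obtains the $L^p(K)$ bound by duality against Stokes solutions (Proposition \ref{convergence u_app^N}), which it needs anyway to identify the limit, but for boundedness alone your route works.

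The genuine gap is in the remainder estimate, and it is precisely where the paper's technical work lies. Your quantitative claim that the cross-interaction data is pointwise $O(a^3/\eta^2)=O(N^{-1/3})$ accounts only for the nearest neighbour: summing the dipole tails over all $j\neq i$ under \eqref{assumption::separation} gives $\sum_{j\neq i}a^3|x_i-x_j|^{-2}=O(a^3N)=O(\lambda)$, which is order one in $N$, so the assertion that the $L^2(\cup_i\partial B_i)$ contribution ``vanishes as $N\to\infty$'' is unjustified (in the plain trace norm over the $N$ spheres it even grows with $N$). What the energy/variational argument actually requires is a uniform-in-$N$ bound on $\sum_i\|D(h_i)\|_{L^2(B_i)}^2$ with $h_i=\sum_{j\neq i}W_j$, and the naive pointwise estimate $|D(h_i)|\lesssim\sum_{j\neq i}a^3|x_i-x_j|^{-3}=O(\lambda\log N)$ only yields $\lambda^3\log^2N$, which is not uniform; removing the logarithm and getting a bound independent of $N$ requires summing the interactions in $\ell^2$ (generalized Young inequality \eqref{inequality::generalized_Young}, and in the paper a Calder\'on--Zygmund argument in the spirit of the method-of-reflections analysis), which is exactly Proposition \ref{prop_borne_uniforme_v_N} giving $\sum_i\|D(h_i)\|_{L^2(B_i)}^2\le C\lambda^3$. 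Your proposal names this as ``the main obstacle'' but does not carry it out, and the heuristic offered in its place is quantitatively wrong, so the proof is incomplete at its decisive step. On the other hand, your worry about a priori bounds on $(U_i,V_i)$ can be dismissed: using the variational characterization of the error (minimize $\int|D(u)|^2$ among divergence-free fields with $D(u)=-D(h_i)$ on $B_i$), the rigid velocities are free parameters and never need to be estimated, and the $g_N$-forced passive part obeys the standard energy bound of Proposition \ref{prop::well_posedness_passive}.
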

\noindent Note that this Theorem allows us to define an accumulation point of the sequence $(u_N)_{N}$ in $L^p_{loc}(\R^3)$ for the same range of $p$.

\begin{theorem}
Let $\lambda>0$. Assume that $g_N \rightarrow g$  in $L^{\frac{6}{5}}(\R^3) \cap L^{3+\eps}(\R^3)$, for some $\eps >0$, and assume that \eqref{assumption::empirical_measure} and \eqref{assumption::separation} hold. Let $u_\lambda$ be an accumulation point of the sequence $(u_N)_{N}$ in  $L^p_{loc}(\R^3)$, then the field $u_\lambda$ solves in the sense of distributions
\begin{equation}
    \left\{
      \begin{aligned}
         - 2 \mu \, \div ([1+\frac{5}{2}\rho \lambda] D(u_\lambda)) + \na p_\lambda & = g +\lambda \, \div{} \sigma_1 +  \mR_\lambda & \ \text{in} \ \R^3,\\
         \div  u_\lambda &  = 0 & \ \text{in} \ \R^3,\\
         \underset{|x| \rightarrow + \infty}{ \lim} u_\lambda(x)  & = 0,
      \end{aligned}
    \right.
    \label{system::effective_main_theorem} 
\end{equation}
where $\sigma_1$ is the trace-free matrix defined in \eqref{first def:: sigma_1} with $\mJ:=\frac{3 \alpha \mu}{4}\Big(\beta- \frac{5}{2} \beta^{-2} + \frac{3}{2} \beta^{-4} \Big)$ and $\mR_\lambda$ satisfies for every $q \geq 3$ 
\begin{align*}
    & |\langle  \mR_\lambda | \phi \rangle| \leq C \lambda^{\frac{5}{3}} ||D(\phi)||_{L^q(\R^3)}, \quad \forall \phi \in \dot{H}^1(\R^3) \cap \dot{W}^{1,q}(\R^3) \text{ divergence-free},
\end{align*}
and where we denote:
\begin{align*}
    \dot{H}^1(\R^3) & := \{h \in W^{1,2}_{loc}(\R^3), \ \na h \in L^2(\R^3)  \}\\
    \dot{W}^{1,q}(\R^3)& := \{h \in W^{1,q}_{loc}(\R^3), \ \na h \in L^q(\R^3) \}.
\end{align*}
\label{main_theorem}
\end{theorem}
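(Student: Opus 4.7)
The plan is a duality argument based on a corrector construction, in the spirit of the Einstein derivation for passive suspensions. Given a divergence-free test function $\phi \in \dot{H}^1(\R^3) \cap \dot{W}^{1,q}(\R^3)$ with $q \geq 3$, I build a modified test field $\Phi_N = \phi - \sum_{i=1}^N w_i$ with three properties: $\Phi_N$ is divergence-free on $\R^3$; it coincides with the rigid velocity $R_i \phi$ (the $L^2$-projection of $\phi|_{B_i}$ onto rigid motions) inside each $B_i$; and each $w_i$ is supported in a thin annulus around $B_i$. Each $w_i$ is the classical Einstein stresslet corrector, obtained as the explicit exterior Stokes flow around a sphere with boundary data $\phi - R_i\phi$, expressible through the Oseen tensor and its source doublet. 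The separation assumption \eqref{assumption::separation} ensures the supports of distinct $w_i$ are disjoint so that $\Phi_N$ is well defined.

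Pairing the momentum equation of \eqref{system::main} with $\Phi_N$ and integrating by parts on $\Omega_N$, the surface integrals $\int_{\pa B_i} \sigma(u_N,p_N)\, n \cdot \Phi_N$ collapse onto the linear and angular moments of the Cauchy stress because $\Phi_N|_{B_i}$ is rigid, and these moments are eliminated using the force and torque balances in \eqref{system::main}. After rearrangement this gives
\begin{equation*}
2 \mu \int_{\R^3} D(u_N) : D(\Phi_N) \;=\; \int_{\R^3} g_N \cdot \Phi_N \;-\; k_f \sum_{i=1}^N \rp_i \cdot \bigl( \Phi_N(x_{f,i}) - \Phi_N(x_i) \bigr).
\end{equation*}
The crucial cancellation is that the naively divergent contribution $k_f \sum_i \rp_i \cdot \Phi_N(x_i)$, of apparent size $\alpha \mu a^{-1}\lambda$, is exactly absorbed by the translational moment of the surface stress; only the dipole difference, of size $O(a)$ per particle, survives.

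The effective system is then identified by analyzing each side separately. On the left, decomposing $D(\Phi_N) = D(\phi) - \sum_i D(w_i)$ and using Theorem \ref{thm::u_N_bornée}, the explicit stresslet form of $w_i$, and \eqref{assumption::empirical_measure_spatial}, one obtains $2\mu \int D(u_N):D(\Phi_N) \to 2\mu \int (1 + \tfrac{5}{2}\rho \lambda) D(u_\lambda):D(\phi)$: this reproduces the Einstein viscosity correction. On the right, I split $\Phi_N(x_{f,i}) = \phi(x_{f,i}) - w_i(x_{f,i}) - \sum_{j \neq i} w_j(x_{f,i})$ and Taylor-expand $\phi(x_{f,i})$ around $x_i$. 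The first-order Taylor term yields
\begin{equation*}
k_f\, a\, \beta \sum_{i=1}^N (\na \phi(x_i)\, \rp_i) \cdot \rp_i \;\longrightarrow\; \tfrac{3}{4} \alpha \mu \beta\, \lambda \int_{\R^3 \times \mS^2} (\na \phi(x)\, \rp) \cdot \rp\, f(x,\rp)\, \d x\, \d \rp
\end{equation*}
in the limit, via $k_f = \alpha \pi \mu a^2$, $\lambda = \tfrac{4\pi}{3} a^3 N$, and \eqref{assumption::empirical_measure}; divergence-freeness of $\phi$ allows trading $\rp \otimes \rp$ for $\rp \otimes \rp - \tfrac{1}{3}\Id$, producing the $\beta$ coefficient in $\mJ$. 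The contribution of the auto-corrector $w_i(x_{f,i})$ uses its explicit expansion at distance $a\beta$ from the sphere centre: the successive powers of $\beta^{-1}$ furnished by the Oseen tensor and the source doublet give precisely the rational coefficients $-\tfrac{5}{2}\beta^{-2}$ and $+\tfrac{3}{2}\beta^{-4}$ of $\mJ$.

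The residual $\mR_\lambda$ then gathers the remaining terms: higher-order Taylor corrections, the cross-corrector sums $\sum_{j \neq i} w_j(x_{f,i})$, and the empirical-measure discretization gap. Each is bounded using the Oseen decay of $w_j$, assumption \eqref{assumption::separation}, and the Sobolev embedding $\dot{W}^{1,q}(\R^3) \hookrightarrow C^{0,\alpha}$ for $q \geq 3$. The main obstacle lies precisely in this last step: the double sums arising from many-body interactions $w_j(x_{f,i})$ have a naive bound of order $\lambda$, which would spoil the target; obtaining the sharp exponent $\lambda^{5/3}$ requires showing that the singular part of these interactions is already absorbed into the Einstein viscosity correction, leaving only a fluctuation of size $\lambda^{5/3}$. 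Theorem \ref{thm::u_N_bornée} is used crucially to control the factors of $D(u_N)$ entering these estimates.
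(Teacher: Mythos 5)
Your testing identity is correct: pairing \eqref{system::main} with a field $\Phi_N=\phi-\sum_i w_i$ that is rigid in each $B_i$ does give
\begin{equation*}
2\mu\int_{\R^3} D(u_N):D(\Phi_N)=\int_{\R^3} g_N\cdot\Phi_N-k_f\sum_{i=1}^N \rp_i\cdot\big(\Phi_N(x_{f,i})-\Phi_N(x_i)\big),
\end{equation*}
and your reciprocity heuristics for where the coefficients of $\mJ$ should come from is appealing. But the proof has a genuine gap at its central step, the claimed limit of the left-hand side. Theorem \ref{thm::u_N_bornée} only gives weak compactness of $u_N$ in $L^p_{loc}(\R^3)$, $p<\frac{3}{2}$; this gives no control whatsoever of $\sum_i\int D(u_N):D(w_i)$, which is a pairing of $D(u_N)$ against fields concentrating on annuli of vanishing measure. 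Identifying its limit is precisely where all the analytic difficulty of Einstein-type results lies (method of reflections or variational estimates as in \cite{gerard2020correction,GVRH,HillairetWu}); it cannot be dispatched by "the explicit stresslet form of $w_i$ and \eqref{assumption::empirical_measure_spatial}". Worse, in the active setting $D(u_N)$ on the annulus around $B_i$ contains the near field created by the particle's own point force at $x_{f,i}$ (the analogue of $v[\rp_i]$, whose gradient is not even square-integrable near $x_{f,i}$); its pairing with $D(w_i)$ is of size $k_f a\sim\lambda/N$ per particle, hence of total order $\lambda$ --- exactly the order of the terms you are deriving. By the reciprocal theorem this pairing produces stresslet-type quantities $k_f a\,(-\frac{5}{2}\beta^{-2}+\frac{3}{2}\beta^{-4})\,D(\phi)(x_i):\rp_i\otimes\rp_i$, i.e. terms of the same nature as the auto-corrector evaluation $k_f\rp_i\cdot w_i(x_{f,i})$ from which you extract part of $\mJ$ on the right-hand side. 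As written, your bookkeeping either double counts or omits this contribution, so the assertion that the left-hand side tends to the purely passive Einstein form $2\mu\int(1+\frac{5}{2}\rho\lambda)D(u_\lambda):D(\phi)$ is unjustified, and the resulting coefficient $\mJ$ is not controlled. Resolving this requires exactly the near-field computation the paper performs (construction of $v[\rp]$ by the method of images and its stresslet, Lemma \ref{lemma::construction de v[p]} and Proposition \ref{convergence u_app^N}).

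The second gap is the remainder estimate, which you essentially concede: you state that the sharp exponent requires showing the singular interactions are "absorbed" into the viscosity correction, without any mechanism. (Incidentally, if the $w_j$ are truncated to thin disjoint annuli, the cross terms $\sum_{j\neq i}w_j(x_{f,i})$ vanish identically by \eqref{assumption::separation}, so the obstacle you single out is not the real one; the real one is the left-hand side pairing above.) In the paper the exponent $\frac{5}{3}$ comes from a different, quantitative route: the splitting $u_N=u_N^{\frak{p}}+u_N^{app}+v_N$, the quoted passive result (Theorem \ref{thm::DGV_Mecherbet}) for the Einstein part, the convergence of $u_N^{app}$ to $St^{-1}(\lambda\,\div\sigma_1)$ (Proposition \ref{convergence u_app^N}), and above all the refined bound $||D(v_N)||_{L^2(\R^3)}\leq C\lambda^{3/2}$ of Proposition \ref{prop_borne_uniforme_v_N} (H\"ofer-type decomposition with Calder\'on--Zygmund estimates) combined with the Bogovskii duality estimate of Proposition \ref{prop:limsup v_N}, which supplies the extra factor $\lambda^{\frac{1}{2}-\frac{1}{q}}$ because $D(\phi)$ is only sampled on the balls. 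Your proposal contains no substitute for these ingredients, so the bound $|\langle\mR_\lambda|\phi\rangle|\leq C\lambda^{5/3}||D(\phi)||_{L^q(\R^3)}$ is not established; without it the statement reduces to an order-$\lambda$ formal expansion rather than the theorem.
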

\begin{remark}
From this last estimate, we get the following inequality for all $1<p \leq \frac{3}{2}$
\begin{equation*}
    ||u_\lambda - \bar{u}||_{\dot{W}^{1,p}(\R^3)} \leq C \lambda^{\frac{5}{3}},
\end{equation*}
where $\bar{u}$ is solution to the effective problem \eqref{system::intro_effective_model}. Through Sobolev embedding, this yields 
\begin{equation*}
     ||u_\lambda - \bar{u}||_{L^r_{loc}(\R^3)} \leq C \lambda^{\frac{5}{3}} \quad \text{for} \quad r=\frac{3p}{3-p} \in \Big[\frac{3}{2},6 \Big].
\end{equation*}
\end{remark}\medskip

\noindent \textbf{Paper organization.}  The outline of the paper is as follows. In Section \ref{section::splitting}, we split the microscopic system \eqref{system::main} into two systems: the \textit{"active system"} which accounts for particles self-propulsion and the \textit{"passive system"} which characterizes the effects induced by immobile rigid particles. This splitting will be of major interest as the literature about the \textit{passive system} is already well-furnished. Subsection \ref{subsection::aproximation u_N^app} introduces an approximation of the \textit{active system} in the dilute regime. The analysis of this approximation performed in Subsections \ref{subsection approx} and \ref{subsection::study of v_N} is the key argument used to prove Theorem \ref{thm::u_N_bornée} in Section \ref{section::boundeness u_N}. The proof of Theorem \ref{main_theorem} is then carried out in Section \ref{section::proof of theorem 1}. Section \ref{section::comments} provides comments on the effective model as well as a discussion on the particles orientation distribution. Eventually, we explain in Section \ref{section::Relaxed Assumption} how one may relax the separation assumption \eqref{assumption::separation} through slight modifications of the analysis.

\section{Splitting of the solution $\boldsymbol{u_N}$}
\label{section::splitting}
Thanks to Stokes equations linearity, we split the solution of the microscopic problem \eqref{system::main} into two fields $u_N := u_N^{\frak{p}} + u_N^{\frak{a}}$. The first term $u_N^{\frak{p}}$ accounts for the particles presence as \textit{passive rigid particles} and is a solution of the \textit{passive system}
\begin{equation}
    \left\{
      \begin{aligned}
         - \mu \Delta u_N^{\frak{p}} + \na p_N^{\frak{p}} & = g_N  &&\ \text{in} \ \Omega_N,\\
         \div  u_N^{\frak{p}} &  = 0 &&\ \text{in} \ \Omega_N,\\
         u_N^{\frak{p}}(x) & = U_i^{\frak{p}} +V_i^{\frak{p}} \times (x-x_i) &&\text{ in } B_i,\\
         \int_{\partial B_i} \sigma(u_N^{\frak{p}},p_N^{\frak{p}})n & = - \int_{B_i} g_N &&\text{ for all } i,\\
         \int_{\partial B_i} \sigma(u_N^{\frak{p}},p_N^{\frak{p}})n \times (x-x_i) & = - \int_{B_i} g_N \times (x-x_i)  &&\text{ for all } i,\\
         \underset{|x| \rightarrow + \infty}{ \lim} u_N^{\frak{p}}(x) &  = 0.
      \end{aligned}
    \right.
    \label{system::main_passive}
\end{equation}
This system and similar ones were already of interest in several studies, see \cite{almog1999ensemble,sanchez1985einstein,Haines&Mazzucato,HillairetWu,niethammer2020local,duerinckx2020effective,DuerinckxGloria,GVRH,gerard2020correction,gerard2021derivation} among many. Through a variational argument and an energy estimate, one can easily show
\begin{proposition}
\label{prop::well_posedness_passive}
For any $N>0$, there exists a unique solution $u_N^{\frak{p}}$ in $\dot{H}^1(\R^3)$ to the system \eqref{system::main_passive} such that $|| \na u_N^{\frak{p}}||_{L^2(\R^3)} \leq C ||g_N||_{L^{\frac{6}{5}}(\R^3)}$ for a constant $C$ independent of $N$.
\end{proposition}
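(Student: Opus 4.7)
The plan is a standard variational / Lax-Milgram argument on the Hilbert space of admissible velocity fields. Define
\begin{equation*}
V := \bigl\{ v \in \dot H^1(\R^3) : \div v = 0 \text{ in } \R^3,\ D(v) = 0 \text{ in each } B_i \bigr\},
\end{equation*}
which is a closed subspace of $\dot H^1(\R^3)$ (closedness comes from the continuity of $v\mapsto D(v)|_{B_i}$ from $\dot H^1(\R^3)$ into $L^2(B_i)$), hence a Hilbert space for the norm $\|\nabla\cdot\|_{L^2}$. Since $D(v)=0$ on the connected set $B_i$ forces $v$ to coincide with a rigid motion $U_i + V_i\times(x-x_i)$ there, the elements of $V$ are exactly the admissible candidates for $u_N^{\frak p}$. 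The natural weak formulation is then to seek $u_N^{\frak p}\in V$ such that $a(u_N^{\frak p}, v) = L(v)$ for every $v\in V$, where
\begin{equation*}
a(u,v) := 2\mu \int_{\R^3} D(u):D(v)\,\d x, \qquad L(v) := \int_{\R^3} g_N\cdot v\,\d x .
\end{equation*}

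Continuity of $L$ and coercivity of $a$ on $V$ are the two checks of Lax-Milgram, and they are precisely where the energy estimate is extracted. By Sobolev embedding $\dot H^1(\R^3)\hookrightarrow L^6(\R^3)$ and Hölder,
\begin{equation*}
|L(v)| \leq \|g_N\|_{L^{6/5}(\R^3)}\|v\|_{L^6(\R^3)} \leq C\|g_N\|_{L^{6/5}(\R^3)}\|\nabla v\|_{L^2(\R^3)},
\end{equation*}
where $C$ is the universal Sobolev constant. For any divergence-free $v\in\dot H^1(\R^3)$ one has the classical identity $\int_{\R^3}|\nabla v|^2 = 2\int_{\R^3}|D(v)|^2$, so $a(v,v) = \mu\|\nabla v\|_{L^2(\R^3)}^2$. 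Lax-Milgram (equivalently, Riesz representation on the Hilbert space $(V, \|\nabla\cdot\|_{L^2})$) then produces a unique $u_N^{\frak p}\in V$, and testing against $v=u_N^{\frak p}$ yields immediately
\begin{equation*}
\mu\|\nabla u_N^{\frak p}\|_{L^2(\R^3)}^2 = L(u_N^{\frak p}) \leq C\|g_N\|_{L^{6/5}(\R^3)}\|\nabla u_N^{\frak p}\|_{L^2(\R^3)}.
\end{equation*}
Crucially the constant depends only on $\mu$ and on the full-space Sobolev and Korn-type constants, not on the particle configuration, which gives $N$-independence.

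Finally, to recover the classical formulation \eqref{system::main_passive} I would plug two families of test fields into the variational identity: divergence-free $C_c^\infty$ fields supported in $\Omega_N$, which via De Rham's theorem produce the pressure $p_N^{\frak p}$ satisfying $-\mu\Delta u_N^{\frak p}+\nabla p_N^{\frak p}=g_N$ in $\Omega_N$; and smoothly cut-off rigid motions $U + V\times(x-x_i)$ localized near a single $B_i$, for which integration by parts against the Stokes equation produces precisely the two balance relations on $\partial B_i$. No step is a genuine obstacle: this is the classical energy method for Stokes problems with rigid inclusions, and the only point deserving real attention is verifying that every constant introduced is independent of the configuration $(x_i)_{i=1}^N$, which follows because all the functional-analytic tools are applied on the whole space $\R^3$ before the inclusions are seen.
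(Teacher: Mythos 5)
Your proposal is correct and follows exactly the route the paper alludes to (``a variational argument and an energy estimate''): Lax--Milgram on the space of divergence-free $\dot H^1(\R^3)$ fields that are rigid in each $B_i$, coercivity from the whole-space identity $\|\nabla v\|_{L^2}^2=2\|D(v)\|_{L^2}^2$, continuity of the load via $\dot H^1\hookrightarrow L^6$ and $g_N\in L^{6/5}$, all with configuration-independent constants. The only step worth a word more care is the recovery of the force and torque balances: your cut-off rigid motions must be made divergence-free (e.g.\ by a Bogovskii correction in an annulus around $B_i$, which exists since the balls do not overlap) before they are admissible test fields in the variational identity.
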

More precisely, an order-two correction to the famous Einstein formula for effective viscosity is derived in \cite{gerard2020correction} from this system. Adapting the arguments to the simpler order-one case, we have 
\begin{theorem}[adapted from \cite{gerard2020correction}]
\label{thm::DGV_Mecherbet}
Let $\lambda>0$. Assume \eqref{assumption::empirical_measure_spatial}-\eqref{assumption::separation} hold and that $g_N \rightarrow g$ in $L^{\frac{6}{5}}(\R^3) \cap L^{3+\eps}(\R^3)$, for some $\eps >0$. Any accumulation point $u_\lambda^{\frak{p}}$ of $(u_N^{\frak{p}})_N$ in $\dot{H}^1(\R^3)$ is a solution in the sense of distributions to  
\begin{equation}
    \left\{
      \begin{aligned}
         - 2 \mu \, \div ([1+\frac{5}{2}\rho \lambda] D(u_\lambda^{\frak{p}})) + \na p_\lambda^{\frak{p}} & = g + \mR^{\frak{p}}_\lambda &&\ \text{in} \ \R^3,\\
         \div  u_\lambda^{\frak{p}} &  = 0 &&\ \text{in} \ \R^3,\\
         \underset{|x| \rightarrow + \infty}{ \lim} u_\lambda^{\frak{p}}(x)  & = 0
      \end{aligned}
    \right.
    \label{system::effective_passive}
\end{equation}
where $\mR^{\frak{p}}_\lambda$ is a remainder term that satisfies for all $q \geq 3$ 
\begin{align*}
    & \langle  \mR^{\frak{p}}_\lambda | \phi \rangle \leq C \lambda^{\frac{5}{3}} ||D(\phi)||_{L^q(\R^3)}, \quad \forall \phi \in \dot{H}^1(\R^3) \cap \dot{W}^{1,q}(\R^3) \text{ divergence-free.}
\end{align*}
\end{theorem}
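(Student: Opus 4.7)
Since the statement is an adaptation of \cite{gerard2020correction}, my plan is to follow the variational/corrector method developed there, specialised to the first-order Einstein correction. Proposition \ref{prop::well_posedness_passive} already provides the uniform $\dot H^1(\R^3)$ bound on $(u_N^{\frak{p}})_N$, so an accumulation point $u_\lambda^{\frak{p}}$ exists and we may assume, after extraction, that $u_N^{\frak{p}} \rightharpoonup u_\lambda^{\frak{p}}$ weakly in $\dot H^1$. The heart of the proof consists in testing the microscopic system \eqref{system::main_passive} against a well-chosen, particle-adapted variant of an arbitrary divergence-free test function and identifying the Einstein contribution in the limit.

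\textbf{Corrector-based test function.} Given a divergence-free $\phi \in \dot H^1(\R^3) \cap \dot W^{1,q}(\R^3)$, I would build a modified test function $\phi_N := \phi - \sum_{i=1}^N \psi_i$, where each $\psi_i$ is divergence-free, supported in an annulus around $B_i$ of outer radius $O(N^{-1/3})$ (the annuli being disjoint thanks to \eqref{assumption::separation}), and on $\partial B_i$ satisfies $\phi_N(x) = \overline U_i + \overline V_i \times (x-x_i)$ for $(\overline U_i, \overline V_i)$ the averaged translation/rotation of $\phi$ across $B_i$. The template for $\psi_i$ is the explicit single-sphere Stokes stresslet flow associated with the ambient strain $D(\phi)(x_i)$, glued to $0$ in the outer annulus via a Bogovski-type correction to preserve incompressibility. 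With this choice, $\phi_N$ is an admissible test function for \eqref{system::main_passive}.

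\textbf{Extracting the Einstein term.} Testing \eqref{system::main_passive} against $\phi_N$ and using the force/torque balance conditions to cancel the boundary contributions on each $\partial B_i$ yields
\begin{equation*}
    2\mu \int_{\R^3} D(u_N^{\frak{p}}) : D(\phi)\,\d x = \int_{\R^3} g_N \cdot \phi\,\d x + 2\mu \sum_{i=1}^N \int_{\R^3} D(u_N^{\frak{p}}) : D(\psi_i)\,\d x + E_N(\phi),
\end{equation*}
where $E_N$ collects the error from the localisation of $\phi$. Using the explicit stresslet, each term in the sum is equivalent to $\tfrac{5}{2}\,|B_i|\, D(u_N^{\frak{p}})(x_i) : D(\phi)(x_i)$ up to a remainder. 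Invoking the spatial empirical-measure convergence \eqref{assumption::empirical_measure_spatial}, the scaling $N|B_i| = \lambda$, and the weak convergence of $D(u_N^{\frak{p}})$ then gives, in the limit, the Einstein correction $5\mu\lambda \int_{\R^3} \rho\, D(u_\lambda^{\frak{p}}) : D(\phi)\,\d x$ that appears in \eqref{system::effective_passive}.

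\textbf{Main obstacle: the quantitative remainder.} The real difficulty is the sharp estimate $|\langle \mR^{\frak{p}}_\lambda \mid \phi\rangle| \leq C \lambda^{5/3} \|D(\phi)\|_{L^q}$ for $q \geq 3$. The error $E_N$ decomposes into three pieces: cross-interactions between correctors $\psi_i,\psi_j$ with $i\neq j$, controlled via the stresslet decay $|D(\psi_i)(x)| \lesssim a^3|x-x_i|^{-3}$ together with the separation $|x_i-x_j| \gtrsim N^{-1/3}$; replacement errors where $D(\phi)$ is frozen at $x_i$ on $B_i$, each contributing $O(a\,\|\nabla D(\phi)\|_{L^q(B_i)})$ and summing up through $\sum_i a^{1+3/q'} \sim \lambda^{5/3} \|\nabla D(\phi)\|_{L^q}$ (using $a = (3\lambda/(4\pi N))^{1/3}$ and Hölder, which forces $q\geq 3$); and a forcing term $\sum_i \int g_N\cdot\psi_i$ treated similarly. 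Controlling all three at the $\lambda^{5/3}$ level is precisely the technical heart of \cite{gerard2020correction}, and my plan is to invoke those estimates directly, the present order-one adaptation being strictly simpler than their order-two analysis.
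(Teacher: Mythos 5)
The paper never actually proves Theorem \ref{thm::DGV_Mecherbet}: it is imported from \cite{gerard2020correction} with the single remark that the order-one adaptation is simpler, so at the level of strategy your proposal (corrector/stresslet test functions to extract the $\frac{5}{2}\rho\lambda$ term, then deferring to the estimates of \cite{gerard2020correction} for the remainder) is consistent with what the paper does. The extraction of the Einstein contribution via the single-sphere stresslet and the empirical measure \eqref{assumption::empirical_measure_spatial} is indeed the right mechanism.

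There is, however, one concrete flaw in the part of the sketch you do spell out, and it sits exactly where the content of the theorem lies. You freeze $D(\phi)$ at the centers $x_i$ by a pointwise Taylor expansion and bound each replacement error by $O\big(a\,\|\nabla D(\phi)\|_{L^q(B_i)}\big)$, so that after summation your remainder is controlled by $\|\nabla D(\phi)\|_{L^q(\R^3)}$, i.e.\ by second derivatives of the test function. The theorem requires a bound by $\|D(\phi)\|_{L^q(\R^3)}$ alone, for $\phi$ merely in $\dot{H}^1(\R^3)\cap\dot{W}^{1,q}(\R^3)$; the stronger norm is not dominated by $\|D(\phi)\|_{L^q}$ and cannot be removed afterwards by density, so as written the route does not reach the stated estimate. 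The exponent bookkeeping also does not close: with $a^3\sim\lambda/N$ one has $\sum_i a^{1+3/q'}=N a^{4-3/q}\sim\lambda^{4/3-1/q}N^{1/q-1/3}$, which at $q=3$ is of order $\lambda$, not $\lambda^{5/3}$, i.e.\ comparable to the Einstein term itself. The way \cite{gerard2020correction} (and this paper, in the analogous active estimates of Propositions \ref{prop:limsup v_N} and \ref{prop_borne_uniforme_v_N}) avoids this is to never Taylor-expand the test function: one compares $D(\phi)$ with its local averages on the balls and uses only H\"older, the separation assumption \eqref{assumption::separation} and the scaling $Na^3\sim\lambda$, which produces factors of the form $\lambda^{\frac12-\frac1q}\|D(\phi)\|_{L^q(\R^3)}$; combined with an $O(\lambda^{3/2})$ energy bound on the corrector error this gives $\lambda^{2-\frac1q}\le\lambda^{\frac53}$ for $q\ge 3$. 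If your intention is to quote those estimates verbatim, as the paper does, that is legitimate; but the pointwise-freezing argument should then be dropped rather than presented as the source of the $\lambda^{5/3}$ rate.
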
\medskip

The second part $u_N^{\frak{a}}$ represents \textit{particles activity} and is a solution of the \textit{active system}
\begin{equation}
    \left\{
      \begin{aligned}
         - \mu \Delta u_N^{\frak{a}} + \na p_N^{\frak{a}} & = - k_f \sum_{i=1}^N \delta(x - x_{f,i})  \, \rp_i   &&\ \text{in} \ \Omega_N,\\
         \div u_N^{\frak{a}} &  = 0 &&\ \text{in} \ \Omega_N,\\
         u_N^{\frak{a}}(x) & = U_i^{\frak{a}} +V_i^{\frak{a}} \times (x-x_i) &&\text{ in } B_i,\\
         \underset{|x| \rightarrow + \infty}{ \lim} u_N^{\frak{a}}(x) &  = 0
      \end{aligned}
    \right.
    \label{system::main_active}
\end{equation}
endowed with the balance of forces and torques
\begin{align*}
     \int_{\partial B_i} \sigma(u_N^{\frak{a}},p_N^{\frak{a}})n  = - k_f \rp_i \quad \text{and} \quad
         \int_{\partial B_i} \sigma(u_N^{\frak{a}},p_N^{\frak{a}})n \times (x-x_i)  = 0, \quad i =1 \dots N.
\end{align*}
This problem has a solution in $\dot{W}_{loc}^{1,p}(\R^3)$ for $1<p<\frac{3}{2}$, obtained by splitting the above system in several sub-systems. A part of the work will be to explain how the above active system is responsible for the apparition of the so called \textit{active stress tensor} $\sigma_1$ in the effective fluid model \eqref{system::intro_effective_model}. In order to prove Theorem \ref{thm::u_N_bornée}, we need a splitting of the above system \eqref{system::main_active}. This is the purpose of the next Subsection.

\subsection{Approximation of $\boldsymbol{u_N^{\frak{a}}}$}
\label{subsection::aproximation u_N^app}
In order to deal with the irregular source terms in \eqref{system::main_active}, we approximate $u_N^{\frak{a}}$ by a dilute approximation denoted $u_N^{app}$. Namely, $u_N^{app}$ should deal with the irregular Dirac source terms as well as the self-propulsion on each particle. As we work in a dilute regime, we neglect all inter-particle interactions and $u_N^{app}$ is defined as a sum of elementary solutions for each particle
\begin{equation}
    (u^{app}_N,p^{app}_N):= \sum_{i=1}^N (v[\rp_i], p[\rp_i]) (x-x_i),
    \label{def::u_N^app_regime_dilué}
\end{equation}
where $v[\rp]$ denotes the solution of the following elementary problem outside the ball $B:= \overline{B(0,a)}$
 \begin{equation}
    \left\{
      \begin{aligned}
        - \mu \Delta v[\rp] + \na p [\rp] & = - k_f \delta(x - a\beta \rp)  \, \rp  &&\ \text{in} \ \R^3 \setminus B,\\
        \div v[\rp] & = 0 &&\ \text{in} \ \R^3 \setminus B,\\
       v[\rp](x) & = U + V \times x &&\text{ in } B,\\
        \underset{|x| \rightarrow + \infty}{ \lim} v[\rp](x) &  = 0,
      \end{aligned}
    \right.
    \label{system::elementary_active_problem}
\end{equation}
endowed with the force and torque balance
\begin{equation*}
    \int_{\partial B}\sigma(v[\rp],p[\rp]) n + k_f \rp =0 \quad \text{and} \quad \int_{ \partial B} \sigma(v[\rp],p[\rp]) n \times x = 0.
\end{equation*}
Note that construction of type \eqref{def::u_N^app_regime_dilué} corresponds to the first iteration of the well-known \textit{Method of Reflections}, see \cite{hofer2021convergence} for an in-depth review of the method. In Subsection \ref{subsection approx}, we first explicitly construct the elementary solution $v[\rp]$, see Lemma \ref{lemma::construction de v[p]}, and we then show that $u_N^{app}$ has a limit in $L_{loc}^p(\R^3)$, $1 < p < \frac{3}{2}$, in Proposition \ref{convergence u_app^N}. \medskip

 We denote $v_N$ the associated error field $v_N:= u_N^{\frak{a}}-u^{app}_N$, which is the solution of the following microscopic system 
\begin{equation}
    \left\{
      \begin{aligned}
         - \mu \Delta v_N + \na \pi_N & = 0  &&\ \text{in} \ \Omega_N,\\
         \div v_N & = 0 &&\ \text{in} \ \Omega_N,\\
         v_N(x) & = - h_i(x) + \tilde{U}_i + \tilde{V}_i \times (x-x_i) &&\text{ in } B_i,\\
         \underset{|x| \rightarrow + \infty}{ \lim} v_N(x) &  = 0
      \end{aligned}
    \right.
    \label{systeme::error_v_N}
\end{equation}
where $h_i$ is the error realized by the dilute approximation $u_N^{app}$ in particle $B_i$ 
\begin{align*}
    h_i(x) = \sum_{j \neq i} v[\rp_j](x-x_j), \quad \text{for} \quad x \in B_i.
\end{align*}
System \eqref{systeme::error_v_N} is coupled with the balance equations for all particles
\begin{equation*}
    \int_{\partial B_i}\sigma(v_N,\pi_N){} n = 0 \quad \text{and} \quad \int_{ \partial B_i} \sigma(v_N,\pi_N) n \times (x-x_i)  = 0.
\end{equation*}
The solution $v_N$ is the unique minimizer of
\begin{equation*}
    \left\{ \int_{\R^3} |D(u)|^2,\  u \in \dot{H}^1(\R^3), \ \div u =0, \text{ s.t } D(u)=-D(h_i) \text{ on } B_i \text{ for all } i=1 \dots N \right\}.
\end{equation*}
We will prove in Subsection \ref{subsection::study of v_N} that $v_N$ is uniformly bounded in $\dot{H}^1(\R^3)$, and therefore in $L^p_{loc}(\R^3)$ for all $1<p \leq 6$ and thus for all $1<p<\frac{3}{2}$.

\section{Bound of the solution - proof of Theorem \ref{thm::u_N_bornée}}
\label{section::boundeness u_N}
This section is devoted to the proof of Theorem \ref{thm::u_N_bornée}. According to  Proposition \ref{prop::well_posedness_passive}, we have a uniform control of $u_N^{\frak{p}}$ in $\dot{H}^{1}(\R^3)$ and thus in $L_{loc}^p(\R^3)$ for $1 \leq p \leq 6$. Using the splitting $u_N = u_N^{\frak{p}} + u_N^{app} + v_N$ we introduced in the previous Section, it remains to respectively control the approximation $u_N^{app}$ and the remainder $v_N$. The next Subsection demonstrates a convergence result for $u_N^{app}$ in $L^p_{loc}(\R^3)$, $1 < p < 3/2$, and Subsection \ref{subsection::study of v_N}  provides a uniform estimate of $v_N$ in $\dot{H}^{1}(\R^3)$, along with one additional result. Gathering these results together, the Theorem follows.
\subsection{Convergence of the dilute approximation $\boldsymbol{u_{N}^{app}}$}
\label{subsection approx}
We focus here on the proof of the following convergence result 
\begin{proposition}
\label{convergence u_app^N}
The approximation $u^{app}_N$ defined in \eqref{def::u_N^app_regime_dilué} weakly converges in $L_{loc}^p(\R^3)$ for any $ 1 <p<\frac{3}{2}$ toward $w_0:=St^{-1}( \lambda \, \div \sigma_1)$ the solution of the Stokes equations
\begin{equation}
    \left\{
      \begin{aligned}
         - \mu \Delta w_0 + \na p_0 & =  \lambda \, \div \sigma_1 &&\text{ in }  \R^3,\\
         \div w_0 & = 0 &&\ \text{in} \ \R^3,\\
         \underset{|x| \rightarrow + \infty}{ \lim} w_0(x)  & = 0.
      \end{aligned}
    \right.
    \label{systeme::w_0}
\end{equation}
where 
\begin{align}
    \sigma_1(x):= \alpha \mJ  \int_{\mS^2} \rp \otimes \rp f(x,\rp) \, \d \rp,
    \label{def::sigma_1}
\end{align}
and $\mJ$ is the positive number given by $\mJ := \frac{3 \alpha \mu}{4}\Big(\beta- \frac{5}{2} \beta^{-2} + \frac{3}{2} \beta^{-4} \Big) .$
\end{proposition}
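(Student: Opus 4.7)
The key observation is that the elementary solution $v[\rp]$ of \eqref{system::elementary_active_problem} carries \emph{zero net force and zero net torque}, by the prescribed balance conditions, and hence decays at infinity as a Stokes stresslet (symmetric force dipole) rather than as a Stokeslet. This dipole, of amplitude $a^3 \sim \lambda/N$, is precisely what will generate the active stress $\sigma_1$ in the limit. My plan has four steps: (i) extract the dipole moment of $v[\rp]$ from its explicit form; (ii) decompose $u_N^{app} = D_N + r_N$ with $D_N$ the sum of far-field dipoles; (iii) pass to the limit in $D_N$ via the empirical measure convergence \eqref{assumption::empirical_measure}; (iv) show $r_N \to 0$ in $L^p_{loc}$ using the separation assumption \eqref{assumption::separation}.

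\noindent\textbf{Main construction.} Starting from the explicit formula in Lemma \ref{lemma::construction de v[p]} and rescaling $V[\rp](y) := a^{-1} v[\rp](ay)$, which absorbs the scaling $k_f = \alpha\pi\mu a^2$, one obtains an $a$-independent profile with multipole expansion
\[
V[\rp](y) = \mathcal{M}[\rp] : \na G(y) + O(|y|^{-3}), \qquad |y| \to \infty,
\]
where $G$ is the Oseen tensor. Axial symmetry around $\rp$ together with trace-freeness forces $\mathcal{M}[\rp] = \kappa (\rp \otimes \rp - \tfrac{1}{3}\Id)$, and a direct computation identifies $\kappa$ with $\mu^{-1}\mJ$, for $\mJ$ as in the statement. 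Scaling back, $v[\rp](x) = a^3\, \mathcal{M}[\rp] : \na G(x) + O(a^4 |x|^{-3})$ for $|x|\gg a$, motivating the splitting $u_N^{app} = D_N + r_N$ with
\[
D_N(x) := \frac{3\lambda}{4\pi N}\sum_{i=1}^N \mathcal{M}[\rp_i] : \na G(x - x_i).
\]
For $\phi \in C_c^\infty(\R^3)$, using that $\na G \ast \phi$ is bounded continuous, assumption \eqref{assumption::empirical_measure} yields
\[
\int D_N \cdot \phi \;\longrightarrow\; \frac{3\lambda}{4\pi}\int_{\R^3}\!\!\int_{\mS^2} \mathcal{M}[\rp]\, f(y,\rp)\,\d\rp : (\na G \ast \phi)(y)\,\d y,
\]
which, after an integration by parts and comparison with \eqref{def::sigma_1} (the $-\tfrac{1}{3}\Id$ part of $\mathcal{M}[\rp]$ produces a pure gradient absorbed into the pressure), equals $\int w_0 \cdot \phi$. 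A uniform bound on $D_N$ in $L^p_{loc}$ for $p < 3/2$ follows from $\na G \in L^p_{loc}(\R^3)$ for such $p$ and a triangle-inequality argument; combined with the distributional convergence, this upgrades to weak $L^p_{loc}$-convergence of $D_N$ to $w_0$.

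\noindent\textbf{Main difficulty.} The delicate point is bounding $r_N$ in $L^p_{loc}$. The remainder contains the near-field Dirac-induced Stokeslet singularities $-k_f G(\cdot - x_{f,i})\rp_i$ (each of strength $a^2 \sim N^{-2/3}$) together with the sub-leading multipole tails $O(a^4|x-x_i|^{-3})$. A naive triangle inequality over $N$ Stokeslets yields $O(N^{1/3})$, which diverges, so the scheme cannot work at the level of individual Stokeslets. The remedy is to exploit \eqref{assumption::separation}: the singular points $x_{f,i}$ lie in disjoint balls $B(x_{f,i}, cN^{-1/3})$ on each of which $\|a^2 G(\cdot - x_{f,i})\|_{L^p}^p = O(N^{-(p+3)/3})$; summing in $\ell^p$ over these disjoint supports gives a total $L^p$ contribution $O(N^{-1/3}) \to 0$. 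The dipole tails are handled by a discrete-convolution estimate that again uses \eqref{assumption::separation}, yielding an additional $O(N^{-1/3}\log N)$ bound. The upper threshold $p < 3/2$ is sharp: it is the local integrability threshold for the $|x|^{-2}$ singularity of the dipole kernel $\na G$, which is what makes the term-by-term $L^p_{loc}$ estimate on $D_N$ possible.
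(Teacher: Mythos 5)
Your proposal is correct in outline, but it proceeds by a genuinely different route than the paper. The paper never splits $u_N^{app}$ at the level of the velocity field: it shows that $-\mu\Delta u_N^{app}+\na p_N^{app}\to\lambda\,\div\sigma_1$ in the sense of distributions by exploiting the system \eqref{edp::solution_elementaire_v[p]} satisfied by each $v[\rp]$ together with the force balance and the Kim--Karrila stresslet identity (the $\beta$ contribution coming from the offset $x_{f,j}-x_j$, the $-\frac{5}{2}\beta^{-2}+\frac{3}{2}\beta^{-4}$ contribution from the stresslet of $w[\rp]$), then obtains the uniform $L^p(K)$ bound by a duality argument (solving $St(\varphi)=\tilde{\Psi}$ for $\Psi\in L^{p'}(K)$, $p'>3$, and playing $\|\na\varphi\|_{L^\infty}\le C\|\Psi\|_{L^{p'}}$ against $|\langle St(u_N^{app})\,|\,\varphi\rangle|\le C\|\na\varphi\|_{L^\infty}$), and finally identifies the limit by uniqueness. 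You instead extract the dipole directly from the explicit image system --- this is exactly the paper's expansion \eqref{taylor_expansion_v[p]}, which the paper only uses later, for $v_N$ --- split $u_N^{app}=D_N+r_N$, pass to the limit in $D_N$ via the empirical measure \eqref{assumption::empirical_measure} and a triangle-inequality $L^p_{loc}$ bound (valid since $\na U$ is homogeneous of degree $-2$, hence $L^p_{loc}$ for $p<\frac{3}{2}$), and show $r_N\to0$ strongly through disjoint-ball near-field estimates plus a far-field discrete-convolution bound. Your route avoids both the stresslet computation (the combination $\beta-\frac{5}{2}\beta^{-2}+\frac{3}{2}\beta^{-4}$ indeed falls out of Taylor-expanding \eqref{expression v_elem}) and the duality step, and it yields strong, quantified smallness of the error; its price is that the estimate on $r_N$ uses the separation hypotheses \eqref{assumption::separation} and \eqref{assumption::separation_beta} in an essential way, whereas the paper's proof of Proposition \ref{convergence u_app^N} uses no separation at all, which is precisely why it transfers unchanged to the relaxed setting of Section \ref{section::Relaxed Assumption}. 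If you write your version up, two details to tighten: near each center you must also control the singular part of $D_N$ itself at $x_i$ and the solid-body extension inside $B_i$ (both obey the same $O(N^{-(p+3)/3})$ per-ball $L^p$ bound, so nothing breaks, but they are not covered by the Stokeslet-at-$x_{f,i}$ estimate alone), and your dipole normalization $\kappa=\mu^{-1}\mJ$ does not match the paper's $v[\rp]=\frac{\lambda}{N}\mu\,\na U(\cdot)\,C(\rp)+R[\rp]$ unless your $G$ omits the $\frac{1}{8\pi\mu}$ factor of the Oseen tensor, so the constant bookkeeping deserves a check even though the $\beta$-dependence is right.
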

\begin{remark}
\label{remark::sigma_1 divergence_free}
As this last system is tested against divergence-free test function, we may write without any changes 
\begin{equation*}
     \sigma_1(x) = \alpha \mJ \int_{\mS^2} (\rp \otimes \rp -  \frac{1}{3}Id)f(x,\rp) \, \d \rp
\end{equation*}
in order to have a trace-free matrix $\sigma_1$, that matches the announced formula \eqref{first def:: sigma_1}. This expression corresponds to an orientation average of the \textit{dipole moment} $\rp \otimes \rp - \frac{1}{3} Id$. This dipole representation was commonly used in several studies when computing the disturbance flow from an active particle, see \cite{potomkin2016effective} for instance.
\end{remark}
In order to prove this Proposition, we first construct an explicit solution $v[\rp]$ of  \eqref{system::elementary_active_problem} and we collect several useful properties  \medskip

\noindent $\bullet$ \textit{Explicit solution} \medskip

\noindent We introduce the Oseen tensor $U(x):= \frac{1}{8 \pi \mu} \Big(\frac{Id}{|x|}+ \frac{x \otimes x}{|x|^3} \Big)$ as well as related ones defined by the explicit formulas
\begin{align*}
     \Tilde{U}(x):= \frac{1}{8 \pi \mu} \Big( \frac{Id}{3 |x|} - \frac{x \otimes x}{|x|^3}\Big) \ \quad \text{and} \quad \ \Delta U(x):=\frac{1}{8 \pi \mu } \Big( \frac{2 Id}{|x|^3} - \frac{6}{|x|^5} x \otimes x \Big), \quad x \in \R^3 \setminus \{0 \}.
\end{align*}
The third-rank tensor $\nabla U$ is defined for any matrix $A$ by
\begin{equation*}
    \nabla U(x)A = -\frac{3}{8 \pi \mu} \frac{A:x \otimes x}{|x|^5} x, \quad x \in \R^3 \setminus \{0 \}.
\end{equation*}
Using these fundamental quantities, we have
    \begin{lemma}
\label{lemma::construction de v[p]}
 A solution $v[\rp]$ of \eqref{system::elementary_active_problem} is given by 
 \begin{align}
     & v[\rp](x) \nonumber\\
      = & -k_f U(x-a\beta \rp) \, \rp + k_f \gamma_1 U(x-a\beta^{-1} \rp) \, \rp + k_f \gamma_2 a \na U(x-a\beta^{-1} \rp) \big(\rp \otimes \rp- \frac{Id}{3} \big) \label{expression v_elem}\\
    & + k_f \gamma_3 a^2\Delta U(x-a\beta^{-1} \rp) \, \rp + k_f (1-\gamma_1) U(x) \, \rp + k_f(1-\gamma_1) \frac{a^2}{|x|^2} \Tilde{U}(x) \, \rp, \quad x \in \R^3 \setminus B \nonumber
 \end{align}
where we denoted 
\begin{align*}
     \gamma_1:= \frac{3 \beta^{-1}}{2}  - \frac{\beta^{-3} }{2}, \quad \gamma_2:=  \beta^{-2} - \beta^{-4} \quad \text{and} \quad \quad \gamma_3:= \frac{\beta^{-1}}{4}(1-\beta^{-2})^2
\end{align*}
constants only depending on the parameter $\beta$. \medskip
 
  \noindent Extending this field by $\frac{k_f (1-\gamma_1)}{6 \pi \mu a} \rp $ inside the ball $B$, the total field $v[\rp]$ lies in $\dot{W}_{loc}^{1,r}(\R^3)$, $ 1 \leq r < 3$. 
\end{lemma}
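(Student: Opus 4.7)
The plan is to build the solution as a superposition of three classical Stokes ingredients dictated by the geometry and the balance laws. By the axial symmetry of the data about the line through $0$ and $\rp$, the rigid-body motion inside $B$ must reduce to a pure translation $U_0\,\rp$ with $V=0$, so the torque balance is automatic and only the scalar $U_0$ remains to be determined.

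The first ingredient is the direct Stokeslet $-k_f\,U(x-a\beta\rp)\,\rp$ produced by the point force. This alone fails the rigid-body condition on $\partial B$. To repair it I would invoke the classical Lorentz image system for a Stokeslet near a rigid sphere: image singularities are placed at the geometric inverse point $x^\ast = a^2(a\beta\rp)/|a\beta\rp|^2 = a\beta^{-1}\rp$, which lies inside $B$. For a radial Stokeslet the image consists of (a) an image Stokeslet of strength $\gamma_1 k_f\,\rp$, (b) a Stokes doublet proportional to $\gamma_2 a$, and (c) a potential source doublet proportional to $\gamma_3 a^2$, with the specific coefficients $\gamma_1,\gamma_2,\gamma_3$ stated in the lemma fixed by the Lorentz--Blake construction. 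The remaining two terms, $k_f(1-\gamma_1)\bigl[U(x)+\tfrac{a^2}{|x|^2}\Tilde{U}(x)\bigr]\rp$, are precisely the classical Stokes flow around a rigid sphere of radius $a$ translating with velocity $U_0\,\rp$, where $U_0 = \tfrac{k_f(1-\gamma_1)}{6\pi\mu a}$ is read off from Stokes drag. The prefactor $1-\gamma_1$ is then pinned down by the force balance: the three Stokeslet coefficients $-k_f,\ \gamma_1 k_f,\ (1-\gamma_1)k_f$ sum to zero, meaning no net Stokeslet at infinity, which translates exactly the requirement that the fluid carries zero total force.

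Verification then consists of checking: (i) outside $B$ each term solves the homogeneous Stokes system except for the first, which reproduces the Dirac source at $a\beta\rp$---all image singularities live strictly inside $B$ since $\beta>1$, hence are harmless in $\R^3\setminus B$; (ii) the six terms evaluated on $|x|=a$ sum to the constant field $U_0\,\rp$---this is the only technically delicate algebraic step, performed by expanding each Oseen-type tensor on the sphere using $|x|=a$ and $\rp\cdot x = a\cos\theta$, then collecting powers of $\cos\theta$; (iii) the integrated boundary traction equals $-k_f\,\rp$, which follows at once from identifying the far-field Stokeslet coefficient, while the torque vanishes by axial symmetry; (iv) the decay is $\mO(|x|^{-2})$ because all Stokeslet strengths cancel. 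Finally the stated $\dot{W}^{1,r}_{loc}$ regularity is a direct consequence of the isolated $|x-a\beta\rp|^{-2}$ singularity of $\nabla U(x-a\beta\rp)$, and the extension of $v[\rp]$ by the constant $U_0\,\rp$ inside $B$ matches the boundary trace continuously by step (ii).

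The main obstacle is step (ii): the on-shell algebraic reconciliation on $\partial B$. Once the Lorentz image ansatz is fixed the three numbers $\gamma_1,\gamma_2,\gamma_3$ are uniquely determined and no free parameters remain, so the calculation is a check rather than a derivation; still, the bookkeeping of tensor contractions of $U$, $\nabla U$, $\Delta U$ and $\Tilde{U}$ on the sphere is intricate, and is the place where one must be careful to recover exactly the claimed coefficients $\gamma_1=\tfrac{3}{2}\beta^{-1}-\tfrac{1}{2}\beta^{-3}$, $\gamma_2=\beta^{-2}-\beta^{-4}$ and $\gamma_3=\tfrac{1}{4}\beta^{-1}(1-\beta^{-2})^2$.
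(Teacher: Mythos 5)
Your construction is essentially the paper's own proof: you split $v[\rp]$ into the point-force Stokeslet together with its image system at the inverse point $a\beta^{-1}\rp$ (so the first four terms vanish on $\partial B$, quoted from the classical sphere-image literature exactly as the paper quotes Kim--Karrila), plus the classical translating-sphere flow whose amplitude is fixed by the force balance and with rotation excluded, your zero-net-Stokeslet accounting and symmetry argument being equivalent to the paper's explicit drag and torque computations. Two cosmetic caveats: the image system for a rigid sphere is not the Lorentz--Blake (plane-wall) one, and a gradient singularity of order $|x-a\beta\rp|^{-2}$ by itself only gives $\na v[\rp]\in L^r_{loc}$ for $r<\frac{3}{2}$, so it does not justify the stated range $r<3$ --- a point on which the paper's own proof is equally terse and which is harmless downstream, since only exponents below $\frac{3}{2}$ are ever used.
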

\noindent The proof is given in  Appendix \ref{appendix::lemma_v[p]} and the associated pressure $p[\rp]$ is computed there for the sake of completeness. \medskip

\noindent Furthermore, $v[\rp]$ satisfies in the sense of distributions 
\begin{equation}
    \left\{
      \begin{aligned}
 - \mu \Delta v[\rp] + \nabla p[\rp] & = - k_f \delta(x-a \beta \rp) \, \rp - \sigma(v[\rp],p[\rp])^+n \, s^a &&\text{ in } \R^3,      \\
     \div v[\rp] & = 0 &&\text{ in } \R^3,
      \end{aligned}
    \right.
\label{edp::solution_elementaire_v[p]}
\end{equation}
where $s^a$ denotes the surface measure on $\partial B(0,a)$ and $\sigma(v[\rp],p[\rp])^+$ is the value of the constraint tensor coming from the exterior of the sphere. \medskip

\noindent $\bullet$ \textit{Scaling consideration} \medskip

\noindent  We have
\begin{equation}
    v[\rp](ax)= \frac{k_f}{a} w[\rp](x), \quad x \in \R^3
    \label{scaling_v[p]}
\end{equation}
where
\begin{align*}
      w[\rp](x) :=&   - U(x-\beta \rp) \, \rp + \gamma_1 U(x-\beta^{-1} \rp) \, \rp +  \gamma_2 \na U(x-\beta^{-1} \rp) \big(\rp \otimes \rp- \frac{Id}{3}\big)\\
    & +  \gamma_3 \Delta U(x-\beta^{-1} \rp) \, \rp + (\gamma_1-1) U(x) \, \rp + (\gamma_1-1) \frac{1}{|x|^2} \Tilde{U}(x) \, \rp, \quad x \in \R^3 \setminus B(0,1)\\
     w[\rp](x)   :=&  \frac{1-\gamma_1}{6 \pi \mu} \rp, \quad x \in B(0,1).
\end{align*}
This field $w[\rp]$ defines a divergence-free field in $\dot{W}^{1,r}_{loc}(\R^3)$, $1 \leq r < 3$, and which is independent of $a$. Denoting $p_w(x) = \frac{a}{k_f} p[\rp](x)$ the pressure associated to this scaling, we verify
\begin{equation*}
    \left\{
      \begin{aligned}
        - \mu \Delta w[\rp] + \na p_w [\rp] & = - \delta(x - \beta \rp)  \, \rp  &&\ \text{in} \ \R^3 \setminus B(0,1),\\
        \div w[\rp] & = 0 &&\ \text{in} \ \R^3 \setminus B(0,1),\\
       w[\rp](x) & = U + V \times x &&\text{ in } B(0,1),\\
        \underset{|x| \rightarrow + \infty}{ \lim} w[\rp](x) &  = 0,
      \end{aligned}
    \right.
\end{equation*}
along with 
\begin{equation*}
    \int_{\partial B}\sigma(w[\rp],p_w[\rp]) n + \rp =0 \quad \text{and} \quad \int_{ \partial B} \sigma(w[\rp],p_w[\rp]) n \times x = 0.
\end{equation*}

\medskip

\noindent $\bullet$ \textit{Reduction to a Stokeslet}\medskip

\noindent Through Taylor expansions with integral remainder, the expression \eqref{expression v_elem} of the elementary solution $v[\rp]$ simplifies to a \textit{Stokeslet doublet} $\nabla U$. Indeed, using that $k_f=\alpha \pi \mu a^2$ and $a^3 = \frac{3\lambda  }{4 \pi N}$, we write
\begin{align}
    v[\rp](x)& = \frac{\lambda}{N} \mu \na U(x) C(\rp)+ R[\rp](x), \quad x \in \R^3 \setminus B     \label{taylor_expansion_v[p]}\\
    \text{ where }C(\rp)  &:=  \alpha \mJ \rp \otimes \rp - \mJ' Id, \quad \mJ' \quad  \text{is a real number,}  \nonumber 
\end{align}
with the following remainder
\begin{align*}
R[\rp](x) = &  -k_f a^2 \beta^2 \int_0^1 (1-t) \nabla^2 U(x-t a \beta \rp) \rp \otimes \rp  \, \rp \, \d t \\
& +  k_f \gamma_1 a^2 \beta^{-2} \int_0^1 (1-t) \na^2 U(x-a t \beta^{-1} \rp)  \rp \otimes \rp  \, \rp \, \d t\\
& -  k_f \gamma_2 a^2 \beta^{-1}  \int_0^1 \na^2 U(x-a t \beta^{-1} \rp)  \big( \rp \otimes \rp - \frac{Id}{3} \big) \, \rp \, \d t \\
& + k_f \gamma_3 a^2\Delta U(x-a\beta^{-1} \rp) \, \rp \\
& + k_f (1-\gamma_1) \frac{a^2}{|x|^2} \Tilde{U}(x) \, \rp.
\end{align*}
Since all the operators appearing in the remainder are homogeneous of degree $-3$, we easily verify
\begin{equation}
    |R[\rp](x)| = \mathcal{O}\big( a^4 (|x|^{-3} +|x-a \beta \rp|^{-3} + |x-a \beta^{-1} \rp|^{-3}) \big).
    \label{remainder_grand_O_v[p]}
\end{equation}
 Note that it was expected that the main contribution of this elementary solution would be a \textit{Stokeslet doublet} $\nabla U$ since the solution expression \eqref{expression v_elem} displayed two pairs of opposed fundamental solutions at very closed points.  It is also known from the literature that an active particle disturbance flow may be seen as a  \textit{Stokeslet doublet} (or dipole) in the far field, see for instance \cite{drescher2011fluid} or \cite{lauga2016stresslets}. \medskip
 
 We are now ready to prove Proposition \ref{convergence u_app^N}.

\begin{proof}[Proof of Proposition \ref{convergence u_app^N}]
We claim first that $- \mu \Delta u^{app}_N + \na p^{app}_N$ converges in the sense of distributions toward $\lambda \, \div{} \sigma_1$. We know from \eqref{edp::solution_elementaire_v[p]} that $u^{app}_N = \sum_{j= 1}^N v[\rp_j](\cdot-x_j)$ solves in the sense of distributions
\begin{align*}
     - \mu \Delta u^{app}_N + \na p^{app}_N = -k_f \sum_{j=1}^N \delta(\cdot - x_{f,j}) \, \rp_j - \sum_{j=1}^N [(\sigma(v[\rp_j],p[\rp_j])^+ n  ](\cdot) s^a (\cdot + x_j) \quad \text{in} \quad \R^3 
\end{align*}
along with $ \div u_N^{app} = 0 \text{ in } \R^3$. Therefore, testing the above equation against $\varphi$ in $D(\R^3)$, we find through a change of variables
\begin{align*}
    & \langle - \mu \Delta u^{app}_N + \na p^{app}_N | \varphi \rangle \\
    & = -k_f \sum_{j} \varphi(x_{f,j}) \cdot \rp_j - \sum_{j} \int_{\partial B(0,a)} [\sigma(v[\rp_j],p[\rp_j])^+ n](x)\cdot \varphi(x+x_j) \, \d s (x)\\
    & = -k_f \sum_{j} \varphi(x_{f,j}) \cdot \rp_j - \sum_{j} \int_{\partial B(x_j,a)} [\sigma(v[\rp_j],p[\rp_j])^+ n](x-x_j)\cdot \varphi(x) \, \d s (x)
\end{align*}
We approximate $\varphi(x)$ by its value at the particles centers to write
\begin{align*}
    & \sum_{j} \int_{\partial B(x_j,a)} [ \sigma(v[\rp_j],p[\rp_j])^+ n](x-x_j) \cdot \varphi(x) \, \d s (x) \\
    & = \sum_{j} \int_{\partial B(x_j,a)} [\sigma(v[\rp_j],p[\rp_j])^+ n](x-x_j) \, \d s(x)  \cdot \varphi(x_j)\\
    & + \sum_{j} \int_{\partial B(x_j,a)} [\sigma(v[\rp_j],p[\rp_j])^+ n](x-x_j) \cdot (\varphi(x)-\varphi(x_j)) \, \d s (x).
\end{align*}
Using the force balance on particle $B_j$ we have 
\begin{equation*}
    \int_{\partial B(x_j,a)} [\sigma(v[\rp_j],p[\rp_j])^+ n](x-x_j) \, \d s(x) \cdot \varphi(x_j) = - k_f  \rp_j \cdot \varphi(x_j),
\end{equation*}
thus we may write
\begin{align}
   &\langle - \mu \Delta u^{app}_N + \na p^{app}_N | \varphi \rangle  = k_f \sum_{j} (\varphi(x_j)- \varphi(x_{f,j})) \cdot \rp_j + \langle B_N | \varphi \rangle,    \label{stokes_u_N^app}\\
& \text{where } \langle B_N | \varphi \rangle := \sum_{j} \int_{\partial B(x_j,a)} [\sigma(v[\rp_j](\cdot-x_j),p[\rp_j](\cdot-x_j))^+ n](x) \cdot (\varphi(x)-\varphi(x_j)) \, \d s(x) .\nonumber
\end{align}
 To compute the first term's limit, we write using \eqref{assumption::force}:
\begin{align}
 &  k_f \sum_{j} (\varphi(x_j)- \varphi(x_{f,j})) \cdot \rp_j = -\frac{\lambda}{N} \frac{3\alpha  \mu \beta}{4 } \sum_{j} \nabla \varphi(x_j) : \rp_j \otimes \rp_j + \mR_N  \label{equality::convergence_utile 3_8}\\
&\text{ with } \mR_N:=  \frac{\lambda}{a N}\frac{3\alpha \mu  }{4} \sum_{j} \left[ (\varphi(x_j)- \varphi(x_{f,j})) \cdot \rp_j + a \beta \nabla \varphi(x_j) : \rp_j \otimes \rp_j \right] \nonumber.
\end{align}
Thanks to a Taylor expansion we get, reminding that $x_{f,j}=x_j + a \beta \rp_j$:
\begin{align*}
    | (\varphi(x_j)- \varphi(x_{f,j})) \cdot \rp_j + a \beta \nabla \varphi(x_j) : \rp_j \otimes \rp_j | \leq C a^2||\na^2 \varphi||_{L^\infty(\R^3)}, \quad  j=1 \dots N
\end{align*}
with a constant $C$ independant of $j= 1 \dots N$. Hence, we find
\begin{align*}
     |\mR_N| \leq  \frac{a}{N} \sum_{j} C \leq C a 
\end{align*}
which vanishes as $N$ increases since $a \sim N^{-1/3}$. Consequently, we get, using the empirical measure convergence \eqref{assumption::empirical_measure} in \eqref{equality::convergence_utile 3_8}
\begin{align}
    k_f \sum_{j} (\varphi(x_j)- \varphi(x_{f,j})) \cdot \rp_j \underset{N \rightarrow + \infty}{\longrightarrow} -\lambda \frac{3\alpha  \mu \beta    }{4} \int_{\R^3 \times \mS^2} \na \varphi(x) : \rp \otimes \rp f(x,\rp) \, \d x \d \rp.
    \label{limite term A_N}
\end{align}
We now turn to $\langle B_N | \varphi \rangle$. Using the scaling consideration \eqref{scaling_v[p]}, we obtain through the change of variables $x=ay+ x_j$ 
\begin{align*}
    \langle B_N | \varphi \rangle & =\sum_{j} \int_{\partial B(x_j,a)} [\sigma(v[\rp_j],p[\rp_j])^+n](x-x_j) \cdot (\varphi(x)-\varphi(x_j)) \, \d s(x)\\
    & = a^2 \sum_{j} \int_{\partial B(0,1)} [\sigma(v[\rp_j],p[\rp_j])^+ n]( a y) \cdot (\varphi(ay + x_j)-\varphi(x_j)) \, \d s(y)\\
    & =  k_f  \sum_{j} \int_{\partial B(0,1)} [\sigma(w[\rp_j],p_w[\rp_j])^+ n](y)\cdot (\varphi(a y + x_j)-\varphi(x_j)) \, \d s(y).
\end{align*}
Owing to a similar Taylor expansion, we write
\begin{equation*}
    \langle B_N | \varphi \rangle = a k_f \sum_{j} \int_{\partial B(0,1) } [\sigma(w[\rp_j],p_w[\rp_j])^+ n](y) \otimes y \, \d s(y) :\na \varphi(x_j)+ \mR_N'
\end{equation*}
where $\mR_N$ is a remainder satisfying $|\mR_N'| \leq k_f a^2 \sum_j ||\na^2 \varphi||_{L^\infty(\R^3)} = \mathcal{O}(a)$. To compute the limit of the sum involving $\sigma(w[\rp_j],p_w[\rp_j])$, remark that the quantity 
$$\int_{\partial B(0,1) } [\sigma(w[\rp_j],w[\rp_j])^+ n](y) \otimes y \, \d s (y)$$
is a physical quantity called \textit{Stresslet} in the literature, which corresponds to the symmetric part of the first force moment on particle $B(0,1)$. Using tedious computations from \cite[Section 10.2.1]{KK} coupled with the fact that a translating sphere in a Stokes flow produces no Stresslet, we get\footnote{$w[\rp]$ must be splitted like $v[\rp]$ to compute the \textit{Stresslet}, see the proof of Lemma \ref{lemma::construction de v[p]} for details.} for all $j=1 \dots N$:
\begin{align*}
    \int_{\partial B(0,1) } [\sigma(w[\rp_j],p_w[\rp_j])^+ n](y) \otimes y  \, \d s (y) = \Big( - \frac{5}{2} \beta^{-2} + \frac{3}{2} \beta^{-4} \Big) \, \rp_j \otimes \rp_j.
\end{align*}
Passing to the limit, we find 
\begin{align}
    \underset{N \rightarrow + \infty}{\lim} \, \langle B_N | \varphi \rangle =  \lambda\frac{3 \alpha \mu}{4} \Big( - \frac{5}{2} \beta^{-2} + \frac{3}{2} \beta^{-4} \Big)  \int_{\R^3 \times \mS^2} \rp \otimes \rp : \na \varphi(x) f(x,\rp) \, \d x \d \rp.
    \label{limite_term_B_N}
\end{align}
Combining \eqref{limite term A_N}-\eqref{limite_term_B_N} with a last integration by parts, it follows that $\langle - \mu \Delta u^{app}_N + \na p^{app}_N | \varphi \rangle$ converges toward
\begin{align*}
     -\lambda \frac{3\alpha  \mu   }{4} \Big(\underbrace{\beta  - \frac{5}{2} \beta^{-2} + \frac{3}{2} \beta^{-4} }_{=\mJ}\Big) \int_{\R^3} \Big( \int_{\mS^2} \rp \otimes \rp f(x,\rp) \d \rp \Big): \na \varphi(x) \, \d x= \langle  \lambda \, \div \sigma_1 | \varphi \rangle,
\end{align*}
which proves that $- \mu \Delta u^{app}_N + \nabla p^{app}_N$ converges in the sense of distributions toward $\lambda \, \div{} \sigma_1$ with the expected matrix $\sigma_1$ defined in \eqref{def::sigma_1}. \medskip

 By a duality argument, we will show how this convergence implies that $u^{app}_N$ weakly converges toward $St^{-1}(\lambda \, \div{} \sigma_1)$ in $L^p_{loc}(\R^3)$ for $1<p<\frac{3}{2}$. We will need the following inequality 
\begin{equation}
     |\langle - \mu \Delta u^{app}_N + \nabla p^{app}_N | \varphi \rangle |  \leq C ||\na \varphi||_{L^\infty(\R^3)} \label{inequality_stokes_u_N^app}
\end{equation}
that we easily obtain  from \eqref{stokes_u_N^app} for any test function in $D(\R^3)$. Let $K$ be a compact set and $\Psi$ in $L^{p'}(K)$ for $p'>3$. We extend $\Psi$ by zero outside $K$ and we note $\tilde{\Psi}$ that extension. Let $\varphi$ be the solution of $St(\varphi) = \tilde{\Psi}$. From elliptic regularity, we know that $\varphi$ is in $W^{2,p'}(\R^3)$ and a Sobolev embedding gives that $\na \varphi \in L^\infty(\R^3)$. We compute using an integration by parts
\begin{align*}
    \int_{K} u_N^{app} \cdot \Psi & = \int_{\R^3}u_N^{app} \cdot \tilde{\Psi} = \int_{\R^3} u_N^{app} \cdot St(\varphi) = \int_{\R^3} St(u_N^{app}) \cdot \varphi.
\end{align*}
Owing to a density argument, the inequality \eqref{inequality_stokes_u_N^app} remains true for $\varphi$ in  $W^{2,p'}(\R^3)$ and yields
\begin{align*}
    \Big| \int_{K} u_N^{app} \cdot \Psi \Big| = \Big| \int_{\R^3} St(u_N^{app}) \cdot \varphi \Big|  \leq  C ||\na \varphi||_{L^\infty(\R^3)}.
\end{align*}
The function $\varphi$ is explicit and given by $\varphi = U \star \tilde{\Psi}$ where $U$ is the Oseen tensor. For almost any $x$ in $\R^3$, we have 
\begin{align*}
    \na \varphi(x) & = \int_{\R^3} \na U(x-y) \tilde{\Psi}(y) \, \d y = \int_{K} \na U(x-y) \Psi(y) \, \d y
\end{align*}
and taking advantage of the fact that $\na U$ is homogeneous of degree $-2$, we get
\begin{align*}
    |\na \varphi(x)| & \leq \int_{K} \frac{1}{|x-y|^2} |\Psi(y)| \, \d y \leq \Big( \int_{K} \frac{1}{|x-y|^{2p}} \, \d y \Big)^{\frac{1}{p}} \Big( \int_{K} |\Psi|^{p'} \Big)^{\frac{1}{p'}} \leq C ||\Psi||_{L^{p'}(\R^3)}
\end{align*}
since $y \mapsto \frac{1}{|x-y|^{2p}}$ is locally integrable ($p'>3$). To sum up, we proved that for any compact $K$ and any $\Psi \in L^{p'}(K)$, the following bound holds
\begin{align*}
   \Big| \int_{\R^3} u_N^{app} \cdot \Psi \Big| \leq C ||\Psi||_{L^{p'}(K)}§.
\end{align*}
By duality, this implies that $u^{app}_N$ is uniformly bounded in $L^p(K)$ and thus there exists a subsequence (still denoted $u^{app}_N$) of $u^{app}_N$ that weakly converges  toward $w_0$ in $L^p(K)$. \medskip

The final step of the proof consists in showing that $w_0$ is the expected limit $St^{-1} (\lambda \, \div \sigma_1)$. Since $u_N^{app}$ weakly converges toward $w_0$ in $L_{loc}^p(\R^3)$, we have \begin{align*}
    - \mu \Delta u^{app}_N  \underset{N \rightarrow + \infty}{ \longrightarrow} - \mu \Delta w_0 \quad \text{in the sense of distributions.}
\end{align*}
Testing against a divergence-free function $\varphi$, we can add the pressure and write 
\begin{align*}
     \langle - \mu \Delta u^{app}_N | \varphi \rangle    &  = \langle - \mu \Delta u^{app}_N + \na p^{app}_N| \varphi \rangle \underset{N \rightarrow + \infty}{\longrightarrow} \langle \lambda \, \div \sigma_1 | \varphi \rangle.
\end{align*}
By uniqueness of the limit in the sense of distributions, we get  $\langle - \mu \Delta w_0 | \varphi \rangle = \langle \lambda \, \div \sigma_1 | \varphi \rangle$  for all divergence-free $\varphi$ in  $D(\R^3)$, which proves that $w_0$ is the solution in the sense of distributions of \eqref{systeme::w_0}. Classical arguments show that system \eqref{systeme::w_0} has a unique solution, which proves that $u_N^{app}$ has only one accumulation point, given by $w_0$. Altogether, this proved that the whole sequence $u_N^{app}$ converges toward $w_0 = St^{-1}(\lambda \, \div \sigma_1)$ in $L_{loc}^p(\R^3)$ for $1<p<\frac{3}{2}$.

\end{proof}

\subsection{Results on the remainder $\boldsymbol{v_N}$}
\label{subsection::study of v_N}

\begin{proposition}
For all $q \geq 2$ and all divergence-free $\phi$ in $C^\infty_0(\R^3)$, 
\begin{align*}
    \underset{N \rightarrow + \infty}{\limsup} \, \Big| 2 \mu \int_{\R^3} D(v_N):D(\phi) \Big| \leq C \lambda^{\frac{1}{2}-\frac{1}{q}} ||D( v_N)||_{L^2(\R^3)} ||D(\phi)||_{L^q(\R^3)}.
\end{align*}
\label{prop:limsup v_N}
\end{proposition}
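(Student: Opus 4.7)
The plan is to exploit the variational characterization of $v_N$ as the minimizer of $\int_{\R^3}|D(u)|^2$ over divergence-free fields $u \in \dot{H}^1(\R^3)$ with $D(u) = -D(h_i)$ on each $B_i$. Its Euler--Lagrange equation reads
\begin{equation*}
    \int_{\R^3} D(v_N) : D(u) = 0 \quad \text{for every divergence-free } u \in \dot{H}^1(\R^3) \text{ with } D(u) = 0 \text{ on each } B_i,
\end{equation*}
i.e.\ for every $u$ that is a rigid motion inside each particle. The idea is thus to construct from $\phi$ a divergence-free corrector $\psi$ supported near the particles such that $\phi - \psi$ is a rigid motion on every $B_i$. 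Then $\psi$ lies in the tangent space, so $\int D(v_N):D(\phi) = \int D(v_N):D(\psi)$, and Cauchy--Schwarz against $D(v_N) \in L^2$ reduces the problem to estimating $\|D(\psi)\|_{L^2}$.

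To build $\psi = \sum_i \psi_i$, set $B_i' := B(x_i, 2a)$ and choose a smooth cutoff $\chi_i$ equal to $1$ on $B_i$ and vanishing outside $B_i'$ with $|\na\chi_i| \leq C/a$. Let $R_i(x) = c_i + W_i(x-x_i)$ be the best $L^q(B_i')$ rigid-motion approximation of $\phi$, with $W_i$ antisymmetric so that $R_i$ is solenoidal. The naive choice $\tilde\psi_i := \chi_i(\phi - R_i)$ agrees with $\phi - R_i$ on $B_i$ but is not divergence free; however $\div\tilde\psi_i = \na\chi_i \cdot (\phi - R_i)$ has zero mean on the annulus $B_i' \setminus B_i$, so a Bogovskii corrector $\eta_i \in W^{1,q}_0(B_i' \setminus B_i)$ solving $\div \eta_i = \div\tilde\psi_i$ exists, and $\psi_i := \tilde\psi_i - \eta_i$ (extended by zero) is divergence free, supported in $B_i'$, and satisfies $\phi - \psi_i = R_i$ on $B_i$. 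Under \eqref{assumption::separation} the enlarged balls $B_i'$ are pairwise disjoint for $\lambda$ small and $N$ large, so $\psi$ is admissible in the Euler--Lagrange identity; this is where the $\limsup$ in the statement comes from.

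The analysis then rests on three scale-invariant estimates, obtained by rescaling each $B_i'$ to the fixed annulus $B(0,2)\setminus B(0,1)$: the Korn--Poincar\'e inequality $\|\phi - R_i\|_{L^q(B_i')} \leq Ca\|D(\phi)\|_{L^q(B_i')}$, the Bogovskii bound $\|\na\eta_i\|_{L^q} \leq C\|\div\tilde\psi_i\|_{L^q} \leq Ca^{-1}\|\phi - R_i\|_{L^q(B_i')}$, and the cutoff estimate $\|D(\tilde\psi_i)\|_{L^q(B_i')} \leq C\|D(\phi)\|_{L^q(B_i')}$. Together they give $\|D(\psi_i)\|_{L^q(B_i')} \leq C\|D(\phi)\|_{L^q(B_i')}$. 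Since $q \geq 2$, H\"older on each ball yields $\|D(\psi_i)\|_{L^2(B_i')} \leq Ca^{3(\frac{1}{2}-\frac{1}{q})}\|D(\phi)\|_{L^q(B_i')}$, and the disjointness of the supports, combined with a discrete H\"older inequality in $i$ with exponents $(q/2,q/(q-2))$, leads to
\begin{equation*}
    \|D(\psi)\|_{L^2(\R^3)}^2 = \sum_i \|D(\psi_i)\|_{L^2(B_i')}^2 \leq C\,(a^3 N)^{1-\frac{2}{q}} \|D(\phi)\|_{L^q(\R^3)}^2 = C\,\lambda^{1-\frac{2}{q}} \|D(\phi)\|_{L^q(\R^3)}^2,
\end{equation*}
since $\tfrac{4}{3}\pi Na^3 = \lambda$. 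Cauchy--Schwarz on $2\mu\int D(v_N):D(\psi)$ then delivers the claimed bound.

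The main obstacle is the simultaneous enforcement of the three properties of $\psi_i$ with scale-invariant constants: divergence-freeness (which forces the Bogovskii correction on an annulus), coincidence with a rigid motion on $B_i$ (which forces subtracting an actual $R_i$ rather than a mere constant), and the $L^q$ control by $D(\phi)$ alone (which forces the use of Korn--Poincar\'e rather than plain Poincar\'e, so that the right-hand side features $\|D(\phi)\|_{L^q}$ and not $\|\na\phi\|_{L^q}$ as would come from a constant subtraction). Once the geometry is properly handled, the counting of the $N$ particles against the volume fraction $Na^3 \sim \lambda$ mechanically produces the gain $\lambda^{\frac{1}{2}-\frac{1}{q}}$.
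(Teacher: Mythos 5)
Your proof is correct and takes essentially the same route as the paper: replace $\phi$ by a divergence-free field supported in the doubled balls $B(x_i,2a)$ that coincides with $\phi$ up to a rigid motion on each $B_i$ (cutoff, Bogovskii correction, scale-invariant Korn--Poincar\'e), then conclude by Cauchy--Schwarz against $D(v_N)$ and H\"older using the particle volume $Na^3\sim\lambda$ to produce $\lambda^{\frac12-\frac1q}$. The only cosmetic difference is that you justify replacing $\phi$ by the corrector through the Euler--Lagrange orthogonality of the minimizer, whereas the paper integrates by parts and invokes the vanishing force and torque balance on $v_N$ --- the same fact in a different guise.
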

We will then show that 
\begin{proposition}
For all $N >0$, 
\begin{align*}
    ||D(v_N)||_{L^2(\R^3)} \leq C \lambda^{\frac{3}{2}}.
\end{align*}
\label{prop_borne_uniforme_v_N}
\end{proposition}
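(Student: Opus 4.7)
The plan is to exploit the variational characterisation of $v_N$ recalled at the end of Subsection~\ref{subsection::aproximation u_N^app}: among divergence-free $u\in \dot{H}^1(\R^3)$ satisfying $D(u)=-D(h_i)$ on every $B_i$, $v_N$ is the unique minimiser of $\int_{\R^3}|D(u)|^2$. Hence $\|D(v_N)\|_{L^2}^2 \leq \|D(u^\ast)\|_{L^2}^2$ for any admissible test field $u^\ast$, and it suffices to construct such a $u^\ast$ with $\|D(u^\ast)\|_{L^2}\leq C\lambda^{3/2}$.

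I would build $u^\ast$ ball by ball. By \eqref{assumption::separation} and the smallness of $a$ compared to $N^{-1/3}$, the enlarged balls $B(x_i,2a)$ are pairwise disjoint and each $h_i(x)=\sum_{j\neq i}v[\rp_j](x-x_j)$ is smooth on a neighbourhood of $B_i$ (its singularities, located at $x_j$ and $x_{f,j}$ for $j\neq i$, sit at distance $\gtrsim N^{-1/3}$). On $B_i$ set $u^\ast := -(h_i - r_i)$, where $r_i(x) = h_i(x_i) + W_i(x-x_i)$ subtracts the skew-symmetric part $W_i$ of $\nabla h_i(x_i)$; then $D(u^\ast)=-D(h_i)$ on $B_i$ as required. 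Extend $u^\ast$ across the shell $B(x_i,2a)\setminus B_i$ through a smooth cutoff $\chi_i$ (equal to $1$ on $B_i$, supported in $B(x_i,2a)$), completed by a Bogovskii-type corrector on the shell so that the resulting $u^\ast$ is divergence-free and compactly supported in $\bigcup_i B(x_i,2a)$.

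Scaling to a reference annulus and invoking standard estimates for the extension/Bogovskii operator together with Korn's inequality yields, uniformly in $i$ and $N$,
\[
\|D(u^\ast)\|_{L^2(B(x_i,2a))}^2 \;\leq\; C\,\|D(h_i)\|_{L^2(B(x_i,2a))}^2 \;\leq\; C\,a^3\,\|\nabla h_i\|_{L^\infty(B(x_i,2a))}^2.
\]
The Stokeslet-doublet reduction~\eqref{taylor_expansion_v[p]} gives $|\nabla v[\rp_j](y)| \lesssim (\lambda/N)|y|^{-3}$ plus remainders whose sum is $O(\lambda^{4/3})$, so using \eqref{assumption::separation} to control $\sum_{j\neq i}|x_i-x_j|^{-3}$ one obtains $|\nabla h_i|\leq C\lambda$ on $B(x_i,2a)$. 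Summing over the $N$ balls,
\[
\|D(v_N)\|_{L^2}^2 \;\leq\; \sum_i C a^3 \lambda^2 \;=\; C N a^3 \lambda^2 \;=\; C\lambda^3,
\]
which is the claim.

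The main technical obstacle is the sharp pointwise bound $|\nabla h_i|\leq C\lambda$. The kernel $\nabla^2 U$ sits at the borderline of the Calderón--Zygmund scale, and a crude shell-by-shell summation of $\sum_{j\neq i}|x_i-x_j|^{-3}$ produces a spurious $\log N$ factor that cannot be tolerated here, since together with Proposition~\ref{prop:limsup v_N} this estimate must eventually feed the $N$-uniform remainder $\mR_\lambda$ of size $\lambda^{5/3}$ in Theorem~\ref{main_theorem}. Removing the logarithm requires exploiting the oriented tensor structure of $\sum_j \nabla^2 U(\cdot-x_j)C(\rp_j)$, viewed as a discrete convolution against the empirical measure $f_N$ converging to $f$, so that in effect one reduces to the action of a bounded singular integral on the compactly supported density $f$. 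Once this bound is secured, the remainder of the argument is routine.
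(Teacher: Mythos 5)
The overall skeleton you use is the right one: the variational characterisation reduces everything to bounding $\sum_i \|D(h_i)\|_{L^2(B_i)}^2$ (your explicit cutoff-plus-Bogovskii test field is essentially the proof of the variational inequality the paper imports from Gérard-Varet--Mecherbet), and your treatment of the remainder terms $R[\rp_j]$ is consistent with the paper's bound $\mR\le C\lambda^{11/3}$. The gap is exactly at the point you flag and then wave away: the claimed uniform pointwise bound $|\nabla h_i|\le C\lambda$ on $B(x_i,2a)$ is not just technically hard, it is false in general under (H1)--(H2). Since $\sum_{j\neq i}|x_i-x_j|^{-3}$ is a sum of positive terms, it is genuinely of order $N\log N$ for well-spread configurations, and the oriented sum $\sum_{j\neq i}\nabla^2U(x-x_j)C(\rp_j)$ can also be made of order $N\log N$ for admissible configurations: place the points near $x_i$, dyadic shell by dyadic shell, only in an angular sector where the kernel $y\mapsto \nabla^2U(y)C(\rp)$ has a fixed sign; separation and weak convergence of the empirical measure (which carries no rate) do not forbid this. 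The cancellation you invoke is of Calderón--Zygmund type, and such cancellation yields $L^p$ bounds for $1<p<\infty$, not $L^\infty$ bounds: even in the continuum, $\M\star f$ for $f$ bounded and compactly supported only lands in BMO and can have logarithmic singularities. So "reducing to a bounded singular integral acting on $f$" cannot deliver the pointwise estimate your summation step needs, and without it your chain gives at best $\lambda^3\log^2N$, which is not uniform in $N$.

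The paper sidesteps pointwise control entirely and works in an $\ell^2$-in-$i$ sense, which is where the log disappears. After the variational reduction, the Stokeslet-doublet contribution is rewritten by smearing the Dirac masses into balls of radius $d_N=\tfrac{c}{4}N^{-1/3}$, i.e.\ comparing the discrete sum with the convolution $\M\star l$ where $l:=\sum_j C(\rp_j)|B(x_j,d_N)|^{-1}1_{B(x_j,d_N)}$. The difference terms ($A_1^i$, $A_2^i$) are controlled by Lipschitz estimates on $\M$ off the diagonal, which produce an extra factor $d_N$ and hence summable $|x_i-x_j|^{-4}$ tails (no log), while the genuinely singular part ($A_3^i$) is estimated through the $L^2$-boundedness of the Calderón--Zygmund operator $\M$ applied to $l$, giving $\sum_i\|A_3^i\|_{L^2(B_i)}^2\le C a^9 d_N^{-6}\sum_i|C(\rp_i)|^2\le C\lambda^3$. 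This only requires an average (square-summed over $i$) control, which is exactly the quantity fed into the final bound, whereas your route demands a uniform-in-$i$ pointwise bound that the hypotheses do not provide. To repair your proof you would have to replace the $L^\infty$ step by such an $L^2$-in-$i$ argument, at which point you recover the paper's proof.
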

\noindent
Mixing these two results together for $q \geq 3$ yields the following useful inequality for any divergence-free $\phi$ in $C_0^{\infty}(\R^3)$:
\begin{align}
    \underset{N \rightarrow + \infty}{\limsup} \, \Big| 2 \mu \int_{\R^3} D(v_N):D(\phi) \Big| \leq C \lambda^{\frac{5}{3}} ||D(\phi)||_{L^q(\R^3)}.
    \label{inequality::nouvelle useful limsup}
\end{align}

\begin{proof}[Proof of Proposition \ref{prop:limsup v_N}]
Using an integration by parts on \eqref{systeme::error_v_N}, we write
\begin{align*}
    2 \mu \int_{\R^3} D(v_N):D(\phi) & = 2 \mu \int_{\Omega_N} D(v_N): D(\phi) + 2 \mu \sum_i \int_{B_i} D(h_i):D(\phi)  \\
    & =- \sum_i \int_{\partial B_i} \sigma(v_N,\pi_N)n \cdot \phi +2 \mu \sum_i \int_{B_i} D(h_i):D(\phi).
\end{align*}
We now use a classical Bogovskii argument. For all $i$ and any constant vectors $v_i$ and $w_i$, we have owing to the balance equations on $v_N$:
\begin{align*}
    \int_{\partial B_i} \sigma(v_N,\pi_N)n \cdot \phi = \int_{\partial B_i} \sigma(v_N,\pi_N)n \cdot (\phi+ u_i +  v_i \times (x-x_i)).
\end{align*}
Thanks to the Bogovskii operator, there exists $\tilde{\phi}_i \in H^1_0(B(x_i, 2a))$ such that
\begin{align*}
    \div \tilde{\phi}_i = 0 \quad \text{in} \quad B(x_i, 2 a), \quad \tilde{\phi}_i = \phi + u_i + v_i \times (x-x_i) \quad \text{in} \quad  B_i
\end{align*}
along with $|| \na \tilde{\phi}_i||_{L^2(\R^3)} \leq C_{i,N} ||\phi + u_i + v_i \times (x-x_i) ||_{W^{1,2}(B_i)}$. With a good choices of the constant vectors $(u_i,v_i)_{1 \leq i \leq N}$, we have the Korn inequality: 
\begin{align*}
    ||\phi + v_i + w_i \times (x-x_i) ||_{W^{1,2}(B_i)} \leq C_{i,N} ||D( \phi)||_{L^2(B_i)},
\end{align*}
which leads to $|| \na \tilde{\phi}_i||_{L^2(\R^3)} \leq C_{i,N} ||D ( \phi)||_{L^2(B_i)}.$ With a translation and scaling argument, the family of constant $(C_{i,N})_i$ can be chosen independent of $i$ and $N$. Extending each $\tilde{\phi}_i$ by zero on the whole space, the sum $\tilde{\phi} = \sum_i \tilde{\phi}_i$ verifies
\begin{align*}
    ||\nabla \tilde{\phi}||_{L^2(\R^3)}^2 \leq C \sum_i ||D ( \phi)||_{L^2(B_i)}^2 = ||D ( \phi)||_{L^2( \cup_i B_i)}^2 \leq C || D(\phi)||_{L^2(\cup_i B_i)}^2
\end{align*}
as long as $ \underset{i \neq j}{\min} \, |x_i - x_j| > 2 a $ which is verified for small enough $\lambda$ thanks to \eqref{assumption::separation}. Using this extension $\tilde{\phi}$, we write
\begin{align*}
    2 \mu \int_{\R^3} D(v_N) : D(\phi) & = - \sum_i \int_{\partial B_i} \sigma(v_N,\pi_N)n \cdot \phi + 2 \mu \sum_i \int_{B_i} D(h_i):D(\phi)  \\
    & = - \sum \int_{\partial B_i} \sigma(v_N,\pi_N)n \cdot \tilde{\phi} + 2 \mu \sum_i \int_{B_i} D(h_i):D(\tilde{\phi}) \\
     & = 2 \mu \int_{\R^3} D(v_N) : D(\tilde{\phi}).
\end{align*}
The Cauchy-Schwarz and Hölder inequalities then yield the expected result
\begin{align*}
    \Big| 2 \mu  \int_{\R^3} D(v_N) : D(\tilde{\phi)}\Big| \leq C ||D(v_N)||_{L^2(\R^3)} ||D ( \phi)||_{L^2( \cup_i B_i)} \leq C \lambda^{\frac{1}{2}-\frac{1}{q}} ||D(v_N)||_{L^2(\R^3)} ||D ( \phi)||_{L^q(\R^3)}.
\end{align*}
\end{proof}

\begin{proof}[Proof of Proposition \ref{prop_borne_uniforme_v_N}]
From a variational argument borrowed from  \cite[Proposition 2.1]{gerard2020correction}, the unique solution $v_N$ of \eqref{systeme::error_v_N} satisfies, under the separation assumption  \eqref{assumption::separation} 
\begin{align*}
    ||\na v_{N}||_{L^2(\R^3)}^2 = 2 || D(v_{N})||_{L^2(\R^3)}^2 \leq C \sum_{i=1}^N ||D(h_i)||_{L^2(B_i)}^2
\end{align*}
for a constant $C$ independant of $N$. From the definition \eqref{expression v_elem} of $v[\rp_j]$ on the whole space, together with Taylor expansion \eqref{taylor_expansion_v[p]}, we write
\begin{align*}
    h_i(x) = \frac{\lambda}{N} \mu \sum_{j \neq i} \na U (x-x_j) C(\rp_j) + \sum_{j \neq i } R[\rp_j](x-x_j), \quad x \in B_i,
\end{align*}
which leads to
\begin{align}
    D(h_i)(x) = \frac{\lambda}{N} \mu \sum_{j \neq i} D(\na U)(x-x_j) C(\rp_j) + \sum_{j \neq i} D(R[\rp_j])(x-x_j), \quad x \in B_i.
    \label{taylor expansion D(h_i)}
\end{align}
As in \cite{gerard2020analysis}, we use the following notations: for any matrix $A$ and $x$ in $\R^3 \setminus \{ 0\} $,
$$D(\nabla U)(x) A = \frac{3}{8\pi\mu }\M(x)A, \quad \M(x)A = -2 \frac{Ax \otimes x}{|x|^5} + 5 \frac{A:(x \otimes x)}{|x|^7} x \otimes x $$
which gives $ D(\na U)(x-x_j) C(\rp_j) = \frac{3}{8 \pi} \M(x-x_j) C(\rp_j)$. Hence, we need to study the two following quantities $\mS$ and $\mR$:
\begin{multline*}
       \sum_{i} ||D(h_i)||_{L^2(B_i)}^2 \\
       \leq C  \sum_i  \int_{B_i} \Big| a^3 \sum_{j \neq i} \M(x-x_j) C(\rp_j) \Big|^2  \d x +C \sum_i \int_{B_i} \Big| \sum_{j \neq i} D(R[\rp_j])(x-x_j)\Big|^2  \d x := \mS + \mR.
\end{multline*}
 In order to deal with the sum of remainders term $\mR$, we recall (see \eqref{remainder_grand_O_v[p]}) that 
$$|D(R[\rp_j])(x-x_j)| = \mathcal{O}(a^4 (|x-x_j|^{-4} + |x-x_j - a \beta \rp_j|^{-4} + |x-x_j-a \beta^{-1} \rp_j|^{-4})).$$ 
We use this to bound $\mR$:
\begin{align*}
       \mR & =\sum_i \int_{B_i} \Big|  \sum_{j \neq i} D(R[\rp_j])(x-x_j) \Big|^2 \d x \\ 
     & \leq  |B_1| \sum_i  \Big( \Big| \sum_{j \neq i} \sup_{z \in B_i} \frac{a^4}{|z-x_j|^4} \Big|^2  +  \Big| \sum_{j \neq i} \sup_{(z,\rp) \in B_i \times \mS^2} \frac{a^4}{|z-x_j- a \beta \rp|^4} \Big|^2\\
     & +\Big| \sum_{j \neq i} \sup_{(z,\rp) \in B_i \times \mS^2} \frac{a^4}{|z-x_j - a \beta^{-1} \rp|^4}\Big|^2 \Big) \\
     & := I_1 + I_2 + I_3.
\end{align*}
We start with the analysis of $I_1$. The following inequality is satisfied for all $i \neq j$ using the separation assumption \eqref{assumption::separation}
\begin{align*}
   \underset{z \in B_i}{\sup} |x_j-z|^{-4} & =   (|x_i-x_j| - a)^{-4}  \leq C |x_i-x_j|^{-4} \big(1-  \lambda^{\frac{1}{3}} c^{-1} \big(\frac{4 \pi}{3}\big)^{-\frac{1}{3}}\big)^{-4} \leq C' |x_i-x_j|^{-4}
\end{align*}
where we used that $\lambda^{\frac{1}{3}} c^{-1} \big(\frac{4 \pi}{3} \big)^{-\frac{1}{3}} < \frac{1}{2}$ for small enough $\lambda$. Denoting $y_i:=x_i N^{1/3}$, we use $ |y_i-y_j| \geq \frac{1}{2} (|y_i-y_j| + c ) $ to find
\begin{align*}
     I_1 & \leq a^{11}  \sum_i   \Big|  \sum_{j \neq i}  |x_i-x_j|^{-4} \Big|^2  \\
     & \leq  C a^{11}  N^{8/3} \sum_i    \Big| \sum_{j} (c+|y_i-y_j|)^{-4} \Big|^2 \\
    & \leq C \lambda^{\frac{11}{3}} \Big( \sup_{i} \sum_j \frac{1}{(c+|y_i-y_j|)^4} \Big)^2 
\end{align*}
where we used the generalized Young inequality:
\begin{equation}
    \forall q \geq 1, \quad  \sum_i \big( \sum_j |a_{ij} b_j| \big)^q \leq \max \Big( \sup_i \sum_j |a_{ij}|, \sup_j \sum_i |a_{ij}| \Big)^{q} \sum_i |b_i|^q.
    \label{inequality::generalized_Young}
\end{equation}
We conclude the estimate noticing\footnote{see \ref{appendix::lemma serie} for a proof of a similar inequality.} that $\sum_j \frac{1}{(c+|y_i-y_j|)^4}$ is uniformly bounded in $i$ by the finite quantity $\int_{\R^3} \frac{1}{(c+|y|)^4} \, \d y$ which entails $I_1 \leq C \lambda^{\frac{11}{3}}$. To provide an estimate on $I_2$ and $I_3$, we use similar arguments, with a little twist to deal with the translated denominators. Notice that, for all $i \neq j$,
\begin{align*}
   \underset{(z,\rp) \in B_i \times \mS^2}{\sup} |z-x_j - a \beta \rp|^{-4} & =  (|x_i-x_j| - \beta a )^{-4}\\
 & \leq |x_i-x_j|^{-4} \big(1-  \beta\lambda^{\frac{1}{3}} c^{-1} \big(\frac{4 \pi}{3}\big)^{-\frac{1}{3}}\big)^{-4}\\
 & \leq C' |x_i-x_j|^{-4}
\end{align*}
where we used $ \beta\lambda^{\frac{1}{3}} c^{-1} \big(\frac{4 \pi}{3}\big)^{-\frac{1}{3}} < \frac{1}{2} $ from the well-separated assumption \eqref{assumption::separation_beta}. Using the same techniques as for $I_1$, we obtain $I_2 \leq C \lambda^{\frac{11}{3}}$. To bound $I_3$, the method is the same, but this time we use the fact that $\beta^{-1} < 1$ and therefore that \begin{align*}
  \underset{(z,\rp) \in B_i \times \mS^2}{\sup} |z-x_j - a \beta^{-1} \rp|^{-4} & = C  (|x_i-x_j| - \beta^{-1} a )^{-4} \leq C'  |x_i-x_j|^{-4}
\end{align*}
which leads in a similar way to $I_3 \leq C \lambda^{\frac{11}{3}}$ and thus to $\mR \leq C \lambda^{\frac{11}{3}}$. 

We now turn to the estimate on $\mS$ and we will prove that 
\begin{align*}
     \sum_i  \int_{B_i} \Big| a^3 \sum_{j \neq i} \M(x-x_j) C(\rp_j) \Big|^2  \d x \leq C \lambda^3.
\end{align*}
Using straightforwardly Lemma A.2 of \cite{HillairetWu}, one could show that the above quantity is bounded by $\lambda^2$. To improve this bound, we follow and adapt a similar argument in \cite{hofer2021convergence} (see proof of Proposition 4.1). Let $d_N = \frac{c}{4} N^{-\frac{1}{3}} \leq \frac{1}{4} \, \underset{i \neq j}{\min} \, |x_i - x_j|$, we introduce $l$ the following function
\begin{align*}
    l:=  \sum_j C(\rp_j) |B(x_j,d_N)|^{-1} 1_{B(x_j,d_N)}.
\end{align*}
Let $T$ be a function such that $T(y)=x_j$ if $y \in B(x_j,d_N)$. For any $x \in B_i$, we write
\begin{align*}
    a^3 \sum_{j \neq i} \M(x-x_j) C(\rp_j) & = a^3 \int_{\R^3 \setminus B(x_i,2 d_N)} (\M(x-T(y)) - \M(x-y)) l(y) \, \d y\\
    & + a^3 \dashint_{B(x_i,d_N) } \int_{\R^3 \setminus B(x_i,2 d_N)}  (\M(x-y) - \M(z-y)) l(y) \, \d y \d z\\
    & + a^3 \dashint_{ B(x_i,d_N) } \int_{\R^3 \setminus B(x_i,2 d_N)}  \M(z-y) l(y) \, \d y \d z\\
    & := A^i_1(x) + A^i_2(x) + A^i_3.
\end{align*}
For all $x \in B_i$ and $y \in B(x_j,d_N)$ with $i \neq j$, we have using \eqref{assumption::separation} 
\begin{align*}
    |\M(x-x_j) - \M(x-y)| \leq C |y-x_j| \sup_{\bar{z} \in [x_j,y]} |\bar{z}-x|^{-4} \leq C d_N |x_i-x_j|^{-4},
\end{align*}
and we obtain using similar computations as the one used above for $\mR$, still denoting $y_i:= N^{\frac{1}{3}} x_i$
\begin{align*}
    \sum_i ||A_1^i||_{L^2(B_i)}^2 & \leq C a^6 \sum_i \int_{B_i} \Big| \sum_{j \neq i} d_N \frac{|C(\rp_j)|}{|x_i-x_j|^4} \Big|^2 \,  \d x\\
    & \leq C a^{9} N^{\frac{8}{3}} d_N^2 \sum_i \Big| \sum_j C (c+|y_i-y_j|)^{-4} |C(\rp_j)| \Big|^2\\
    & \leq \frac{\lambda^3}{N} \Big( \sup_{i} \sum_j (c+|y_i-y_j|)^{-4} \Big)^2 \sum_i |C(\rp_i)|^2\\
    & \leq C \frac{\lambda^3}{N}  \sum_i |C(\rp_i)|^2.
\end{align*}
Thanks to \eqref{assumption::empirical_measure}, we find $  \sum_i ||A_1^i||_{L^2(B_i)}^2  \leq C \lambda^{3}$. The sum $\sum_i ||A_2^i||_{L^2(B_i)}^2 $ is treated the same way using that for all $x \in B_i$, $y \in  B(x_j,d_N)$ and $z \in  B(x_i,2 d_N)$, we have
\begin{align*}
    |\M(x-y) - \M(z-y)| \leq C d_N \sup_{\bar{z} \in [x,z]} |\bar{z} - y|^{-4} \leq C d_N (|x_i-x_j|- 2 d_N)^{-4} \leq C d_N |x_i-x_j|^{-4}
\end{align*}
and the same analysis yields $\sum_i ||A_2^i||_{L^2(B_i)}^2  \leq C \lambda^{3}$. We now turn to the contribution ofrom the $A_3^i$. Introducing the function $l_i$ defined by 
\begin{align*}
    l_i:=C(\rp_i) |B(x_i,d_N)|^{-1} 1_{B(x_i,d_N)}, \quad i =1 \dots N,
\end{align*}
we can write for any $z \in B(x_i, d_N)$:
\begin{align*}
    \int_{\R^3 \setminus B(x_i,2 d_N)}  \M(z-y) l(y) \, \d y = (\M \star (l-l_i))(z).
\end{align*}
We then deduce
\begin{align*}
    \sum_{i}||A_3^i||_{L^2(B_i)}^2 & = C a^3 \sum_{i} (A_3^i)^2\\
    & \leq C a^3 \sum_i \big( a^3 \dashint_{B(x_i,d_N)}  (\M \star (l-l_i))(z) \d z  \big)^2\\
    & \leq C  \frac{a^9}{d_N^3} \sum_i || \M  \star (l-l_i)||_{L^2(B(x_i,d_N))}^2 \quad \text{applying Hölder inequality}\\
    & \leq  C  \frac{a^9}{d_N^3} || \M  \star l||_{L^2(\R^3)}^2 + C  \frac{a^9}{d_N^3} \sum_i || \M  \star l_i||_{L^2(B(x_i,d_N))}^2
\end{align*}
using that the union $\cup_i B(x_i,d_N)$ is disjoint thanks to \eqref{assumption::separation}. As $\M$ is a Calderon-Zyngmund operator, we have 
\begin{align*}
     \sum_{i}||A_3^i||_{L^2(B_i)}^2  & \leq C  \frac{a^9}{d_N^3} ||l||_{L^2(\R^3)}^2 + C  \frac{a^9}{d_N^3} \sum_i ||l_i||_{L^2(\R^3)}^2.
\end{align*}
We then notice that $ \sum_i ||l_i||_{L^2(\R^3)}^2 =   ||l||_{L^2(\R^3)}^2 = \frac{1}{d_N^3} \sum_i |C(\rp_i)|^2 $ which yields
\begin{align*}
    \sum_{i}||A_3^i||_{L^2(B_i)}^2  \leq C\frac{a^9}{d_N^6} \sum_i |C(\rp_i)|^2 \leq C \lambda^{3}
\end{align*}
which entails $\mS \leq C \sum_{k=1}^3 ||A^i_k||_{L^2(B_i)}^2 \leq C \lambda^3$. Altogether, we proved that 
\begin{align*}
           ||D(v_N)||_{L^2(\R^3)}^2 \leq C \sum_{i} ||D(h_i)||_{L^2(B_i)}^2 \leq C( \mS + \mR )\leq C \lambda^3.
\end{align*}
\end{proof}

\section{Proof of Theorem \ref{main_theorem}}
\label{section::proof of theorem 1}

Let $\phi$ a divergence-free test function in $C^\infty_0(\R^3)$. We have using the splitting $u_N=u_N^{p} + u_N^{app} + v_N$
\begin{align}
    \int_{\R^3} D(u_N):D(\phi) = \int_{\R^3} D(u_N^{\frak{p}}):D(\phi)  + \int_{\R^3} D(u_N^{app}):D(\phi)  + \int_{\R^3} D(v_N):D(\phi).
    \label{FV u_N dem}
\end{align}
Owing to Section \ref{section::boundeness u_N}, we know there exists an accumulation point $u_\lambda$ of $(u_N)$ in $L^p_{loc}(\R^3)$ for $1<p<\frac{3}{2}$. Up to an extraction $N_k$, there exists $u_\lambda^{\frak{p}}$ an accumulation point of $(u_N)$ in $\dot{H}^1(\R^3)$ such that 
\begin{align*}
    u_\lambda = u_\lambda^{\frak{p}} + w_0 + \underset{k \rightarrow + \infty}{\lim} \, v_{N_k}
\end{align*}
where $w_0=  St^{-1}(\lambda \, \div \sigma_1)$ is the limit of $u_N^{app}$ in $L^p_{loc}(\R^3)$, see Proposition \ref{convergence u_app^N}. Passing to the limit along this extraction in \eqref{FV u_N dem} yields
\begin{multline*}
    2 \mu \int_{\R^3} D(u_\lambda):D(\phi) \\
    = 2\mu\int_{\R^3} D(u_\lambda^{\frak{p}}):D(\phi)  + 2 \mu  \int_{\R^3} D( w_0):D(\phi)  + \underset{k \rightarrow + \infty}{\lim} \, 2 \mu \int_{\R^3} D(v_{N_k}):D(\phi).
\end{multline*}
Note that this last limit is well-defined since all the other terms in  \eqref{FV u_N dem}  converges along the extraction $N_k$. According to the passive Theorem \ref{thm::DGV_Mecherbet}, $u_\lambda^{\frak{p}}$ is a solution to system \eqref{system::effective_passive} which entails
\begin{align*}
    2 \mu\int_{\R^3} D(u_\lambda^{\frak{p}}):D(\phi) = -2 \mu \int_{\R^3} \frac{5}{2} \rho \lambda  D(u_\lambda^{\frak{p}}):D(\phi) + \int_{\R^3} g \cdot \phi + \langle \mR_\lambda^p| \phi \rangle.
\end{align*}
By definition of $w_0$, we also have 
\begin{align*}
     2 \mu \int_{\R^3} D( w_0):D(\phi) = \lambda \int_{\R^3}  \sigma_1 : D(\phi).
\end{align*}
Altogether, we can write the following variationnal formulation for $u_\lambda$
\begin{align}
     2   \mu \int_{\R^3} (1+\frac{5}{2} \rho \lambda)  D(u_\lambda):D(\phi) = \int_{\R^3} g \cdot \phi + \lambda \int_{\R^3}  \sigma_1 : D(\phi) + \langle \mR_\lambda| \phi \rangle
     \label{FV::final u_lambda}
\end{align}
where 
\begin{align*}
   \langle \mR_\lambda | \phi \rangle & :=\langle \mR_\lambda^p | \phi \rangle + 2 \mu \int_{\R^3} (\frac{5}{2} \rho \lambda )  D(w_0): D(\phi) \\
   & + \underset{k \rightarrow + \infty}{\lim} \, 2 \mu \int_{\R^3} (\frac{5}{2} \rho \lambda )  D(v_{N_k}): D(\phi) + \underset{k \rightarrow + \infty}{\lim} \, 2 \mu \int_{\R^3} D(v_{N_k}): D(\phi).
\end{align*}
Let $q\geq 3$. A standard $W^{1,q'}(\R^3)$ estimate on the Stokes equations \eqref{systeme::w_0} satisfied by $w_0$ yields the estimate $|| D(w_0)||_{L^{q'}(\R^3)} \leq C \lambda$ for $q \geq 3$. Therefore, we get using Hölder inequality
\begin{align}
    \Big| 2 \mu \int_{\R^3} (\frac{5}{2} \rho \lambda )  D(w_0): D(\phi) \Big| \leq C \lambda^{2} ||D(\phi)||_{L^q(\R^3)}.
    \label{inequality w_0 in main theorem}
\end{align}
Thanks to the inequality \eqref{inequality::nouvelle useful limsup}, we know that
\begin{align*}
    \Big| \underset{k \rightarrow + \infty}{\lim} \, 2 \mu \int_{\R^3} D(v_{N_k}): D(\phi) \Big| \leq C \lambda^{\frac{5}{3}} ||D(\phi)||_{L^q(\R^3)}
\end{align*}
and we immediately get 
\begin{align*}
      \Big| \underset{k \rightarrow + \infty}{\lim} \, 2  \mu \int_{\R^3} (\frac{5}{2} \rho \lambda)  D(v_{N_k}): D(\phi) \Big| \leq C \lambda^{\frac{8}{3}} ||D(\phi)||_{L^q(\R^3)}.
\end{align*}
By density, these three inequalities hold for any divergence-free $\phi$ in $\dot{H}^1(\R^3) \cap \dot{W}^{1,q}(\R^3)$. Furthermore, we already know that $|\langle \mR_\lambda^p | \phi \rangle| \leq C \lambda^{\frac{5}{3}} ||D(\phi)||_{L^q(\R^3)}$ for any divergence-free $\phi$ in $\dot{H}^1(\R^3) \cap \dot{W}^{1,q}(\R^3)$, which gives overall the expected estimate
    \begin{align*}
    & |\langle  \mR_\lambda | \phi \rangle| \leq C \lambda^{\frac{5}{3}} ||D(\phi)||_{L^q(\R^3)}, \quad \forall \phi \in \dot{H}^1(\R^3) \cap \dot{W}^{1,q}(\R^3) \text{ divergence-free}.
\end{align*}
The proof is ended as \eqref{FV::final u_lambda} is exactly the weak formulation of the system \eqref{system::effective_main_theorem}.
\qed
\section{Comments on the results}
\label{section::comments}
We proved that the system of interest \eqref{system::main} may be approached, up to an error of order $\lambda^{\frac{5}{3}}$, by the following effective system 
\begin{equation}
    \left\{
      \begin{aligned}
         - 2 \mu \, \div ([1+\frac{5}{2}\rho \lambda] D(u_\lambda)) + \na p_\lambda & = g + \lambda \, \div{} \sigma_1  &&\text{ in }  \R^3,\\
         \div u_\lambda & = 0 &&\ \text{in} \ \R^3,\\
         \underset{|x| \rightarrow + \infty}{ \lim} u_\lambda(x) &  = 0.
      \end{aligned}
    \right.
    \label{system::effective_model_comment_section}
\end{equation}
with $\sigma_1(x)= \alpha \mJ \int_{\mS^2} (\rp \otimes \rp-\frac{1}{3} Id) f(x,\rp) \, \d \rp. $ The modified effective viscosity of Einstein $ \mu(1+\frac{5}{2} \rho \lambda)$ naturally appears on the left-hand side and corresponds to the particles passive contribution. However, particles self-propulsion is responsible for the new stress source term $\lambda \, \div \sigma_1$ on the right-hand side, where $\sigma_1$ is the classic \textit{active stress tensor}. As explained in the introduction, several works explored active suspensions rheology, and it was shown that some anisotropy is needed to observe the effect of activity. {} Note that other effects, different from particles activity, can be responsible for similar stress terms. For instance, a similar viscoelastic stress is derived in \cite{hofer2022derivation} for passive non-spherical Brownian particles. \black{}  \medskip

In order to compare our result to these studies, remember that Stokes models of the form \eqref{system::effective_model_comment_section} are obtained from the incompressible Navier-Stokes equations at low Reynold number, in a stationary state at a frozen time $t$. With that in mind, we define an instantaneous kinetic energy of the flow $u_\lambda$ in a domain $\Omega $ containing $\mO$ 

\begin{equation*}
    E_c(t):=\frac{1}{2} \int_{\Omega} |u_\lambda|^2.
\end{equation*}
Performing an energy balance as in \cite{saintillan2010extensional}, the instantaneous energy dissipation rate is given by 
\begin{equation*}
    \frac{\mathrm{d} E_c}{\mathrm{d}t}(t) = -2 \mu \int_{\Omega} (1+\frac{5}{2} \rho \lambda) |D(u_\lambda)|^2  - \lambda \int_{\Omega} \sigma_1 : D(u_\lambda).
\end{equation*}
The first term corresponds to the mechanical energy loss due to the particles presence, and it appears that the Einstein contribution to viscosity leads to a greater energy dissipation. The second term is the one of interest here as it reflects the impact of particles self-propulsion in the system energy. \medskip

In more complete kinetic models where the particles dynamics is taken into account, the probability density $f(x,\rp,t)$ usually depends on time and solves a Fokker-Planck equation coupled with the Stokes one. Namely, the stationary fluid system \eqref{system::effective_model_comment_section} forms a closed system when coupled with the following Fokker-Planck equation
\begin{equation}
    \label{equation::FP}
    \partial_t f(x,\rp,t) + \na_x \cdot (\dot{x} \, f(x,\rp,t)) + \na_\rp \cdot (\dot{\rp} \, f(x,\rp,t)) = 0
\end{equation}
where $\dot{x}$ and $\dot{\rp}$ are the translational and angular flux velocities. A popular example is due to Saintillan and Shelley \cite{saintillan2008instabilities,alonso2016microfluidic}, and is given by
\begin{align}
    \dot{x} & = U_0 \rp + u_\lambda(x,t)\\
    \dot{\rp} & =  (I_d - \rp \otimes \rp) [ \xi D(u_\lambda(x,t)) + W(u_\lambda(x,t) )]\rp - D_r \na_{\rp} \ln f(x,\rp,t).
\end{align}
In the above equations, $U_0$ is the translational velocity of an isolated particle and $\xi$ is a parameter depending on the particles shape. $D_r$ denotes the rotational Brownian diffusion coefficient, and $W(u_\lambda(x,t))$ is the vorticity tensor of the fluid velocity $u_\lambda(x,t)$. In the greatly simplified model where we neglect the transport effects as well as vorticity, equation \eqref{equation::FP} takes the form
\begin{align}
\label{equation::FP simplified}
    \partial_t f + \na_\rp \cdot ( (I_d - \rp \otimes \rp) \xi D(u_\lambda(x,t)) \rp f)  - D_r \Delta_\rp f= 0.
\end{align}
Considering $f(x,\rp)$ a stationnary solution of the above equation, we can look for a probability density for which spatial dependence only occurs through the flow symmetric gradient, that is $f(x,\rp):= F[D(u_\lambda)(x)](\rp)$. In other words, $x$ can be seen as a parameter only acting through $D(u_\lambda)(x)$ in equation \eqref{equation::FP simplified}. In that specific case, the energy gain or loss due to self-propulsion is given by
\begin{align}
    - \lambda \int_{\Omega} \sigma_1 : D(u_\lambda) = -C \alpha \int_{\Omega} \Big( \int_{\mS^2} \rp \otimes \rp : D(u_\lambda)(x) F[D(u_\lambda)(x)](\rp) \, \d \rp \Big) \, \d x.
    \label{formula::loss/gain_energy}
\end{align}
Several remarks are in order about this formula. \medskip

Assuming some symmetry on the probability density $F$, say for any trace-free symmetric matrix $S$, we have $F[S](\rp_1,\rp_2,\rp_3) = F[S](\rp_1,-\rp_2,\rp_3)$ which means that particles spend an equal amount of time oriented into two opposed hemisphere. In that case,  the active contribution in \eqref{formula::loss/gain_energy} vanishes to zero when averaging over $\mS^2$. This is in accordance with many studies \cite{ishikawa2007rheology,haines2008effective, decoene2019modelisation} who pointed out the absence of rheology signature for isotropic distribution of active spherical particles. \medskip

At the opposite, if we want the active contribution  to have a fixed sign, we need to find a condition on $F[S](\rp)$ such that for any trace-free symmetric matrix $S$:
\begin{equation}
    \int_{\mS^2}  \rp \otimes \rp : S \, F[S](\rp) \, \d \rp >0.
    \label{condition::anisotropie_densite}
\end{equation}
If this condition is verified and if we consider a  pusher-like particles suspension ($\alpha<0$), the term $- \lambda \int_{\Omega} \sigma_1 : D(u_\lambda)$ injects kinetic energy in the system and is opposed to the energy loss due to the effective viscosity $(1+ \lambda \frac{5}{2} \rho )\mu$, hence a diminution in the measurement of the apparent viscosity which is consistent with the experience carried out in \cite{sokolov2009reduction}. For puller-like particles, the effect is opposite and more mechanical energy is being dissipated by the activity, thus an apparent viscosity augmentation, which is confirmed by the experiments described in \cite{rafai2010effective}. It remains to understand how condition \eqref{condition::anisotropie_densite} can be linked to some anisotropy on the orientation probability density. We give now a simple condition based on the eigenvalues of $S$. Consider $\lambda^+$ a positive eigenvalue of $S$, say the greatest, and $\rp^+$ the associated eigenvector in $\mS^2$.  As in \cite{haines2008effective, saintillan2008instabilities}, we consider a suspension of nearly aligned particles in the $\rp^+$ direction
\begin{equation}
    F[S](\rp)= \delta(\rp-\rp^+)
    \label{assumption::probability_density_dirac}
\end{equation}
and we verify \eqref{condition::anisotropie_densite} as
\begin{align*}
     \int_{\mS^2} \rp \otimes \rp : S  \, F[S](\rp) \, \d \rp = |\mS^2|\rp^+ \otimes \rp^+ : S = |\mS^2| \lambda^+ > 0.
\end{align*}
Looking back to the empirical measure convergence \eqref{assumption::empirical_measure}, we have for any $\varphi$ in $C^0(\R^3 \times \mS^2)$
\begin{equation*}
    \frac{1}{N} \sum_{i=1}^N \varphi(x_i,\rp_i) \underset{N \rightarrow + \infty}{\longrightarrow} \int_{\R^3}  \varphi(x,\rp_x^+) \, \d x
\end{equation*}
where $\rp_x^+$ is the normalized eigenvector associated to the biggest positive eigenvalues of $D(u_\lambda)(x)$. This means that particles at location $x$ tend to orient themselves in the extensional direction $\rp_x^+$ of the local flow $D(u_\lambda)(x)$, which is a classical feature of active suspensions.

\section{Relaxed assumptions}
 \label{section::Relaxed Assumption}
 As in \cite{GVRH}, the separation assumption \eqref{assumption::separation} is relaxed and replaced by the followings
 \begin{align}
     & \text{ there exists } M>2 \beta, \text{ such that }\forall N, \  \forall i \neq j, \ |x_i - x_j| \geq M a \tag{H2'} \label{assumption::separation_relaxed_1},\\
     & \text{ there exists } C,\alpha >0, \text{ such that } \forall \eta >0  , \ \sharp \{i, \ \exists j \neq i, \ |x_i- x_j| \leq \eta N^{-1/3} \} \leq C \eta^\alpha N. \tag{H2''} \label{assumption::separation_relaxed_2}
 \end{align}
The first assumption ensures that the micro-swimmers do not overlap. The second one means that a controlled number of particles might be very close to each other. For a fixed $\eta>0$, we introduced the following set of good and bad indices 
\begin{align*}
    & G_\eta := \{ 1 \leq i \leq N, \ \forall j \neq i, \  |x_i - x_j| \geq \eta N^{-1/3} \}, \\
    & B_\eta :=\{ 1, \dots, N\} \setminus G_\eta.
\end{align*}
The purpose of this Section is to see how slight modifications of the above analysis enables to replace Theorem \ref{main_theorem} by the following 
\begin{theorem}
\label{main_theorem relaxed duality}
Assume \eqref{assumption::empirical_measure_spatial}-\eqref{assumption::separation_relaxed_1}-\eqref{assumption::separation_relaxed_2} and that $g_N \rightarrow g$ in $L^{\frac{6}{5}}(\R^3)$ with $g \in L^1(\R^3) \cap L^\infty(\R^3)$.  Then there exists $q_{min}>1$ such that any accumulation point $u_\lambda$ of $(u_N)_N$ in  $L^p_{loc}(\R^3)$ ($1 < p < \frac{3}{2}$) verifies in the sense of distributions
\begin{equation*}
    \left\{
      \begin{aligned}
         - 2 \mu \, \div ([1+\frac{5}{2}\rho \lambda] D(u_\lambda)) + \na p_\lambda & = g + \lambda \, \div \sigma_1 + \mR_\lambda &&\ \text{in} \ \R^3,\\
         \div u_\lambda &  = 0 &&\ \text{in} \ \R^3,\\
         \underset{|x| \rightarrow + \infty}{ \lim} u_\lambda(x)  & = 0
      \end{aligned}
    \right.
\end{equation*}
where for all $q \geq q_{min}$ there exists $\delta<0$ such that the remainder verifies
\begin{align*}
    & \langle  \mR_\lambda | \phi \rangle \leq C \lambda^{1+\delta} ||D(\phi)||_{L^q(\R^3)}, \quad \forall \phi \in \dot{H}^1(\R^3) \cap \dot{W}^{1,q}(\R^3) \text{ divergence-free}
\end{align*}
and $\sigma_1$ defined in \eqref{def::sigma_1}.
\end{theorem}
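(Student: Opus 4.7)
The plan is to retain the splitting $u_N = u_N^{\frak{p}} + u_N^{app} + v_N$ from Section \ref{section::splitting} and to adapt each of the three contributions to the relaxed assumptions \eqref{assumption::separation_relaxed_1}--\eqref{assumption::separation_relaxed_2}, at the price of a slightly weaker power of $\lambda$ in the remainder. The organising principle is to introduce a small parameter $\eta>0$, use the partition of indices into $G_\eta$ and $B_\eta$ already defined in the paper, exploit the bound $\sharp B_\eta\leq C\eta^\alpha N$ from \eqref{assumption::separation_relaxed_2} to absorb the pathological particle clusters as a remainder, and finally to optimise $\eta$ as a small power of $\lambda$.

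For the passive part $u_N^{\frak{p}}$, I would invoke the analogue of Theorem \ref{thm::DGV_Mecherbet} proved in \cite{GVRH} precisely under the hypotheses \eqref{assumption::separation_relaxed_1}--\eqref{assumption::separation_relaxed_2}; this already delivers the Einstein viscosity correction with a remainder of the required dual form. For the approximation $u_N^{app}$, the proof of Proposition \ref{convergence u_app^N} uses separation only indirectly: the distributional convergence of $-\mu\Delta u_N^{app}+\nabla p_N^{app}$ towards $\lambda\,\div\sigma_1$ is driven by the empirical measure convergence \eqref{assumption::empirical_measure} alone, and the application points $x_{f,i}$ remain inside the fluid thanks to $M>2\beta$ in \eqref{assumption::separation_relaxed_1}. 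The subsequent duality argument passing from the distributional convergence to an $L^p_{loc}$ bound on $u_N^{app}$ itself relies only on $\|\na\varphi\|_{L^\infty(\R^3)}$ and goes through unchanged.

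The main obstacle is the remainder $v_N$, whose $\dot{H}^1$ control in Section \ref{subsection::study of v_N} uses separation in two places: the Bogovskii construction in Proposition \ref{prop:limsup v_N}, and the term-by-term estimates on the sums $\mS$ and $\mR$ in Proposition \ref{prop_borne_uniforme_v_N}. For the Bogovskii step, following the clustering strategy of \cite{GVRH} I would group the particles $\{B_i\}_{i\in B_\eta}$ into maximal connected clusters at scale $\eta N^{-1/3}$ and extend the test field through a joint Bogovskii operator on an enlarged convex hull whose distance to the remaining good balls is controlled by $\eta N^{-1/3}$, while keeping the pointwise construction on $B(x_i,\eta N^{-1/3}/2)$ for $i\in G_\eta$. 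This produces a bound of the form $\|\na\tilde{\phi}\|_{L^2(\R^3)}^2 \leq C\eta^{-\gamma}\|D(\phi)\|_{L^2(\cup_i B(x_i,\eta N^{-1/3}))}^2$ for an explicit exponent $\gamma$. For the sums $\mS$ and $\mR$, the key bound $\sup_i\sum_j(c+|y_i-y_j|)^{-4}<\infty$ is replaced by a sum restricted to $G_\eta$, which is uniformly bounded by a constant times $\eta^{-4}$ against an absolutely convergent series; the contribution of $B_\eta$ is controlled through $\sharp B_\eta\leq C\eta^\alpha N$ together with a direct Cauchy--Schwarz using the minimum distance $Ma$ from \eqref{assumption::separation_relaxed_1}. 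The net output is an estimate $\|D(v_N)\|_{L^2(\R^3)}\leq C\lambda^{3/2}\eta^{-\gamma'}$ with a loss factor $\eta^{-\gamma'}$ depending on the clustering.

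Finally, gathering these pieces as in Section \ref{section::proof of theorem 1}, the remainder acquires compensating factors $\eta^{-\gamma''}$ from the Bogovskii/sum losses and $\eta^{\alpha}$ (or a positive power thereof) from the good/bad splitting of the $u_N^{app}$ contribution; choosing $\eta=\lambda^{\kappa}$ for a small $\kappa>0$ balances the two and produces a remainder of the announced form $\lambda^{1+\delta}\|D(\phi)\|_{L^q(\R^3)}$ with $\delta$ explicit in terms of $\alpha$ and the loss exponents. The threshold $q_{min}$ arises from the duality-based handling of the bad part of $u_N^{app}$: Sobolev embedding $W^{2,q'}\hookrightarrow W^{1,\infty}$ requires $q'>3$, and the Hölder interpolations used to absorb the $\eta$-losses impose a further lower bound on $q$.
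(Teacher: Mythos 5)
Your overall architecture is the paper's: keep the splitting $u_N=u_N^{\frak{p}}+u_N^{app}+v_N$, quote Theorem \ref{theoreme DGV Höfer ordre 1} from \cite{GVRH} for the passive part, observe that Proposition \ref{convergence u_app^N} is insensitive to the relaxed separation, introduce $G_\eta$, $B_\eta$ and optimise $\eta=\lambda^\kappa$ at the end. The gap is in your treatment of $v_N$. You propose to keep a single global energy bound of the form $\|D(v_N)\|_{L^2(\R^3)}\leq C\lambda^{3/2}\eta^{-\gamma'}$, handling the bad particles "by counting plus Cauchy--Schwarz with the minimum distance $Ma$". This cannot work: inside a bad cluster the mutual distances are only of order $a$ (not $\eta N^{-1/3}$), so for such an $i$ the boundary datum $h_i$ contains terms $\frac{\lambda}{N}\mu\, D(\na U)(x-x_j)C(\rp_j)\sim a^3\cdot|x-x_j|^{-3}=O(1)$ on $B_i$ (and the sum over a packed cluster even threatens a $\log N$ accumulation, since \eqref{assumption::separation_relaxed_2} bounds only the total number of bad indices, not their local arrangement). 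Hence the bad particles contribute at least $\sim\lambda\eta^\alpha$ to $\sum_i\|D(h_i)\|^2_{L^2(B_i)}$, which for $\eta=\lambda^\kappa$ with small $\kappa$ is far larger than $\lambda^{3}\eta^{-2\gamma'}$; the claimed net estimate is therefore false in general, and feeding it into the Proposition \ref{prop:limsup v_N}-type duality would not produce the correct family of error terms.

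This is precisely the point where the paper's proof introduces extra structure that your plan is missing. The error is split as $v_N=v_N^1+v_N^2$ according to whether the boundary data comes from good or bad sources. The good-source part $v_N^1$ is handled by an energy estimate, but in $L^q$ (Proposition \ref{prop_borne_uniforme_v_N section 6}, using Lemma \ref{lemma comparaison série intégrale}), which is what produces the counting factors $\eta^{\alpha-2\alpha/q}$ when the data is evaluated on bad balls. The bad-source part $v_N^2$ is \emph{not} estimated through $\sum_i\|D(h_i^2)\|^2_{L^2(B_i)}$: the leading Stokeslet-doublet content of $v[\rp_j]$ is replaced by the exact straining solutions $\frac15 w[C(\rp_j)]$, so that $v_N^2=\psi_N+\phi_N-\phi_N^{app}$, where the dangerous superposition $\phi_N^{app}$ is estimated by testing its distributional equation directly against $\phi$ and Hölder, yielding $(\lambda\eta^\alpha)^{\frac{q-1}{q}}\|D(\phi)\|_{L^q(\R^3)}$, while only the faster-decaying remainder data ($\psi_N$) and the constant data ($\phi_N$) go through Bogovskii, giving $\lambda^{1-\frac1q}\eta^{\frac\alpha2}\|D(\phi)\|_{L^q(\R^3)}$. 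These contributions are of strictly lower order in $\lambda$ than $\lambda^{5/3}$, and it is exactly their presence that forces $q_{min}$ to depend on $\alpha$ and $\delta$ to be small; your proposal, by trying to absorb them into a $\lambda^{3/2}\eta^{-\gamma'}$ bound, both misestimates their size and loses the mechanism that makes the final optimisation in $\eta$ and $q$ close. (Your cluster-wise Bogovskii is also unnecessary: under \eqref{assumption::separation_relaxed_1} the per-particle construction of Proposition \ref{prop:limsup v_N} survives with constants depending on $M$ only; and note that the convergence of $u_N^{app}$ uses the full \eqref{assumption::empirical_measure}, not just \eqref{assumption::empirical_measure_spatial}.)
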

Using the set of indices $G_\eta$ and $B_\eta$, Gérard-Varet and Höfer \cite{GVRH} approached the passive system \eqref{system::main_passive} by an Einstein-like effective system of order one. More precisely, we have the following result that replaces Theorem \ref{thm::DGV_Mecherbet} in the analysis of the passive system \ref{system::main_passive}
\begin{theorem}[Theorem 1 from \cite{GVRH}]
\label{theoreme DGV Höfer ordre 1}
Let $\lambda>0$. Assume \eqref{assumption::empirical_measure_spatial}-\eqref{assumption::separation_relaxed_1}-\eqref{assumption::separation_relaxed_2} and that $g_N \rightarrow g$ in $L^{\frac{6}{5}}(\R^3)$ with $g \in L^1(\R^3) \cap L^\infty(\R^3)$. Then there exists $q_{min}^{\frak{p}}>1$ and $C>0$ such that any accumulation point $u_\lambda^{\frak{p}}$ of $(u_N^{\frak{p}})_N$ in $\dot{H}^1(\R^3)$ is a solution in the sense of distributions of
\begin{equation*}
    \left\{
      \begin{aligned}
         - 2 \mu \, \div ([1+\frac{5}{2}\rho \lambda] D(u_\lambda^{\frak{p}})) + \na p_\lambda^{\frak{p}} & = g + \mR^{\frak{p}}_\lambda &&\ \text{in} \ \R^3,\\
         \div u_\lambda^{\frak{p}} &  = 0 &&\ \text{in} \ \R^3,\\
         \underset{|x| \rightarrow + \infty}{ \lim} u_\lambda^{\frak{p}}(x)  & = 0
      \end{aligned}
    \right.
    \label{effective_finale system_theorem passive Höfer}
\end{equation*}
where for all $q \geq q_{min}^{\frak{p}}$, there exists $\delta^{\frak{p}}>0$ such that
\begin{align*}
    & \langle  \mR^{\frak{p}}_\lambda | \phi \rangle \leq C \lambda^{1+\delta^{\frak{p}}} ||D(\phi)||_{L^q(\R^3)}, \quad \forall \phi \in \dot{H}^1(\R^3) \cap \dot{W}^{1,q}(\R^3) \text{ divergence-free.}
\end{align*}
\end{theorem}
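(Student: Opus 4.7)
The final statement is quoted as Theorem 1 of \cite{GVRH} adapted to the first-order Einstein expansion, so my proposal is to sketch how the GVRH machinery yields it, taking as input only the uniform energy bound from Proposition \ref{prop::well_posedness_passive}. The starting point is the splitting $u_N^{\frak{p}} = St^{-1}(g) + w_N$ where the correction $w_N$ captures the particle-induced perturbation. One then builds a dilute approximation $w_N^{app}$ as a superposition of elementary stresslet-type fields: around each particle $B_i$, the disturbance flow generated by the rigidity constraint in a background strain $D(St^{-1}(g))(x_i)$ is (to leading order) a Stokeslet dipole whose coefficient is exactly the Einstein $\frac{5}{2}$. Summing these contributions and passing to the limit via \eqref{assumption::empirical_measure_spatial} should formally give $-\mu\Delta w + \nabla p = -\frac{5}{2}\mu \lambda \, \div(\rho D(u_\lambda^{\frak{p}}))$, recovering the effective equation.

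The core step is to quantify how close $u_N^{\frak{p}}$ is to this approximation in the dual of $\dot{W}^{1,q}$, divergence-free. The plan here is the following duality argument: given a test $\phi \in \dot H^1\cap \dot W^{1,q}$ divergence-free, integrate by parts to localize the defect on the union of particles, and extend $\phi$ inside each $B_i$ by a Bogovskii field $\tilde\phi_i$ matching the constant-plus-rotation boundary data dictated by the balance equations on $u_N^{\frak{p}}$. Under the genuine separation \eqref{assumption::separation}, Bogovskii extensions on disjoint balls $B(x_i, 2a)$ give an $L^2$-orthogonal family and the argument follows mechanically, as in the proof of Proposition \ref{prop:limsup v_N}. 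Under the relaxed assumptions, two adjustments are necessary: partition the indices into good $G_\eta$ and bad $B_\eta$ for a well-chosen $\eta = \eta(\lambda)$; apply the separated Bogovskii argument only on $G_\eta$, where well-separation holds at scale $\eta N^{-1/3}$, and build a cluster-type Bogovskii extension on $B_\eta$ by grouping overlapping neighbourhoods into connected components and solving a single div-problem on each component.

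The estimate of the bad contribution relies decisively on \eqref{assumption::separation_relaxed_2}, giving $\#B_\eta \leq C\eta^\alpha N$, so that the stresslet-dipole sum over $B_\eta$ and the cluster Bogovskii extension are both controlled by a positive power $\eta^\alpha$. The residual in the good sum, by contrast, comes from higher-order multipole terms in the method of reflections and from the truncation of Calderon--Zygmund kernels at scale $\eta N^{-1/3}$; these produce a factor $\eta^{-\gamma}\lambda^{1+\gamma'}$ for some positive exponents computed via the generalized Young inequality \eqref{inequality::generalized_Young}, exactly in the spirit of the $A_1^i, A_2^i, A_3^i$ decomposition used for the bound on $\mS$ in Proposition \ref{prop_borne_uniforme_v_N}. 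The optimization $\eta = \lambda^\theta$ for a suitable $\theta>0$ then yields a remainder of size $\lambda^{1+\delta^{\frak{p}}}$ with some explicit $\delta^{\frak{p}}>0$, provided the test-function exponent $q$ is at least some $q_{min}^{\frak{p}}>1$ coming from the $L^q$-boundedness of the Calderon--Zygmund operator $\M$.

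The main obstacle is undoubtedly the cluster Bogovskii construction on $B_\eta$: on a cluster of nearly touching balls, the classical Bogovskii operator degenerates, and one must instead construct a divergence-free extension on a thickened union with a norm bound that does not deteriorate with the cluster geometry. This is the central technical innovation of \cite{GVRH}, and it is what allows one to simultaneously maintain the $L^q$ duality bound, justify the passage to the limit in the empirical measure, and obtain a strictly positive gain $\delta^{\frak{p}}$ over the trivial $\lambda^1$ remainder. Once this extension is in place, the rest of the argument parallels Section \ref{section::proof of theorem 1}: one combines the passive remainder $\mR_\lambda^{\frak{p}}$ with the active Stokeslet-dipole convergence of Proposition \ref{convergence u_app^N} (which is unaffected by dropping \eqref{assumption::separation}, as it only uses the scaling and the empirical measure) and with the bound of Proposition \ref{prop_borne_uniforme_v_N} on $v_N$, itself to be revisited analogously with the good/bad split, to deduce the announced variational identity for $u_\lambda$.
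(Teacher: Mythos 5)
The paper offers no proof of this statement: it is imported verbatim as Theorem~1 of \cite{GVRH}, so there is nothing internal to compare your sketch against beyond the way Section~\ref{section::Relaxed Assumption} reruns the same machinery on the active part. At that level your architecture is consistent with the cited source: a dilute stresslet approximation driven by the local background strain, a duality argument against divergence-free $\dot{W}^{1,q}$ test functions via Bogovskii extensions, a good/bad splitting $G_\eta/B_\eta$ exploiting the counting assumption \eqref{assumption::separation_relaxed_2}, and an optimization $\eta=\lambda^\theta$ yielding the gain $\delta^{\frak{p}}>0$ for $q\ge q_{min}^{\frak{p}}$, which matches the explicit exponents recorded in Remark~\ref{remarque exposents}.

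The point I would push back on is your identification of a ``cluster Bogovskii construction on $B_\eta$'' as the central technical innovation. No such construction is needed: assumption \eqref{assumption::separation_relaxed_1} guarantees $|x_i-x_j|\ge Ma$ with a fixed $M>2\beta>2$ for \emph{all} pairs, bad ones included, so the balls $B(x_i,\tfrac{M}{2}a)$ are pairwise disjoint with a fixed aspect ratio and the per-ball Bogovskii operator has constants depending on $M$ but not on $i$ or $N$ --- this is exactly why the present paper can assert that Proposition~\ref{prop:limsup v_N section 6} ``is still valid as long as particles do not overlap.'' The genuine difficulty under the relaxed assumptions lies elsewhere: the interaction sums $\sum_{j\ne i}|x_i-x_j|^{-4}$ are no longer uniformly controlled when particles cluster, and the bad particles' contribution must instead be absorbed by H\"older in $L^q$ together with $\sharp B_\eta\le C\eta^\alpha N$; it is precisely this step that forces $q$ above a threshold $q_{min}^{\frak{p}}$ and produces the competing powers of $\eta$ and $\lambda$ that one optimizes. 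Your sketch does contain this mechanism, but attributing the crux to a degenerate-geometry extension problem misplaces where the work is. A minor scoping remark: your final paragraph recombines the passive remainder with the active pieces $u_N^{app}$ and $v_N$, which belongs to the proof of Theorem~\ref{main_theorem relaxed duality}, not to the purely passive statement under review.
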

\begin{remark}
\label{remarque exposents}
The constants are explicit and given by $q_{min}^{\frak{p}}:= \frac{6 + 2 \alpha}{\alpha}$ and $\delta^{\frak{p}}:=\frac{q-1}{q}-\frac{6(q-1)}{6(q-1)+(q-2) \alpha}$. For the estimate on $ \mR^{\frak{p}}_\lambda$ to hold, the parameter $\eta$ was assumed to be of the form $\lambda^{\theta}$ with $\theta:= \frac{2 q}{6 (q-1) + (q-2) \alpha}$ which always lies in $]0,\frac{1}{3}[$. The same choice will be made for Theorem \ref{main_theorem relaxed duality}.
 \end{remark}
 The strategy of the global analysis remains the same. We approximate $u_N^{\frak{a}}=u_N^{app}+v_N$ with the same dilute approximation $u_N^{app}$ defined in \eqref{def::u_N^app_regime_dilué}. All the results presented in Section \ref{section::boundeness u_N} remains true in this new setting. In particular, we know that $u_N^{app}$ converges toward $w_0$ in $L^p_{loc}(\R^3)$ and that $(u_N)_N$ is bounded in $L^p_{loc}(\R^3)$. However, the new separation assumptions lead to differences in the analysis of the error term $v_N$ which we detail below. Lastly, the proof of Theorem \ref{main_theorem relaxed duality} is carried out using a similar approach. \\
 \medskip

 \noindent The solution $v_N$ to \eqref{systeme::error_v_N} corresponds to the error made by the Method of Reflections and is here to correct the trace of $u_N^{app}$ on the particles through the boundary condition $D(v_N)= -D(h_i)$ on particle $B_i$ with  $ h_i(x) = \sum_{j \neq i} v[\rp_j](x-x_j)$. We will split the influence of good and bad particles and we write $v_N:= v_N^{1}+ v_N^{2}$ where \begin{align*}
    St(v_N^1) = 0 \quad \text{in} \quad  \Omega_N, \quad D(v_N^1)=  -D \big( \sum_{j \neq i \atop j \in G_\eta} v[\rp_j](x-x_j) \big):= - D(  h_i^1(x))\quad \text{on} \quad B_i
\end{align*} 
and 
\begin{align*}
    St(v_N^2) = 0 \quad \text{in} \quad  \Omega_N, \quad D(v_N^2)=  -D \big( \sum_{j \neq i \atop j \in B_\eta} v[\rp_j](x-x_j) \big):= - D(  h_i^2(x))\quad \text{on} \quad B_i.
\end{align*}
\subsubsection{Analysis of $v_N^1$}
 The proof of the following Proposition is similar to the proof of Proposition \ref{prop:limsup v_N} and is still valid as long as particles do not overlap, which holds under assumptions \eqref{assumption::separation_relaxed_1}.
 \begin{proposition}
For all $q \geq 2$ and all divergence-free $\phi$ in $C^\infty_0(\R^3)$,
\begin{align*}
    \underset{N \rightarrow + \infty}{\limsup} \, \Big| 2 \mu \int_{\R^3} D(v_N^1):D(\phi) \Big| \leq C \lambda^{\frac{1}{2}-\frac{1}{q}} ||D( v_N^1)||_{L^2(\R^3)} ||D(\phi)||_{L^q(\R^3)}.
\end{align*}
\label{prop:limsup v_N section 6}
\end{proposition}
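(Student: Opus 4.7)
The plan is to follow almost verbatim the strategy of the proof of Proposition \ref{prop:limsup v_N}, observing that the only place where the original separation assumption \eqref{assumption::separation} was used is to guarantee that the Bogovskii extensions built on each particle have pairwise disjoint supports. Since \eqref{assumption::separation_relaxed_1} gives $|x_i-x_j|\ge Ma$ with $M>2\beta>2$, the non-overlapping of slightly enlarged balls is preserved (one may replace the inflation factor $2$ by any fixed factor $<M/2$, up to choosing $\lambda$ small, so the Bogovskii estimate on $B(x_i, ra)$ is available for a uniform $r$). Crucially, the argument is independent of whether indices are good or bad: it only uses that $\{B(x_i,ra)\}$ are pairwise disjoint, which continues to hold under \eqref{assumption::separation_relaxed_1}.

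Concretely I would proceed as follows. First, since $v_N^1$ is Stokes in $\Omega_N$ with the force/torque balance on each $B_i$, integrate by parts to write
\begin{equation*}
2\mu\int_{\R^3} D(v_N^1):D(\phi) \;=\; -\sum_i \int_{\partial B_i} \sigma(v_N^1,\pi_N^1)n\cdot\phi \;+\; 2\mu\sum_i \int_{B_i} D(h_i^1):D(\phi),
\end{equation*}
and use the force and torque balance to substitute $\phi$ by $\phi + u_i + v_i\times(x-x_i)$ in each boundary integral, for any choice of constant vectors $(u_i,v_i)$. Then, applying Korn's inequality on $B_i$, choose $(u_i,v_i)$ so that $\|\phi + u_i + v_i\times(x-x_i)\|_{W^{1,2}(B_i)} \le C\|D(\phi)\|_{L^2(B_i)}$ with a constant $C$ independent of $i,N$ (by translation and scaling).

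Next, invoke the Bogovskii operator on $B(x_i,ra)$ for a fixed $r\in(2,M/2)$ to obtain $\tilde\phi_i \in H^1_0(B(x_i,ra))$ divergence free, equal to $\phi + u_i + v_i\times(x-x_i)$ on $B_i$, and satisfying $\|\nabla\tilde\phi_i\|_{L^2(\R^3)}\le C\|D(\phi)\|_{L^2(B_i)}$ uniformly in $i,N$. The key point is that, thanks to \eqref{assumption::separation_relaxed_1} and the choice $r<M/2$, the supports of the $\tilde\phi_i$ are pairwise disjoint, hence $\tilde\phi:=\sum_i\tilde\phi_i$ is divergence free with
\begin{equation*}
\|\nabla\tilde\phi\|_{L^2(\R^3)}^2 \;\le\; C\sum_i \|D(\phi)\|_{L^2(B_i)}^2 \;=\; C\|D(\phi)\|_{L^2(\cup_i B_i)}^2.
\end{equation*}
Now $\tilde\phi$ agrees with $\phi+u_i+v_i\times(x-x_i)$ on each $B_i$, so one can replace $\phi$ by $\tilde\phi$ in the boundary terms and then undo the integration by parts to arrive at
\begin{equation*}
2\mu\int_{\R^3} D(v_N^1):D(\phi) \;=\; 2\mu\int_{\R^3} D(v_N^1):D(\tilde\phi).
\end{equation*}

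Finally, by Cauchy--Schwarz and then Hölder in $\cup_i B_i$ (whose volume is $\sim \lambda$), one gets
\begin{equation*}
\Big|2\mu\int_{\R^3} D(v_N^1):D(\phi)\Big|\le C\|D(v_N^1)\|_{L^2(\R^3)}\|D(\phi)\|_{L^2(\cup_i B_i)}\le C\lambda^{\frac12-\frac1q}\|D(v_N^1)\|_{L^2(\R^3)}\|D(\phi)\|_{L^q(\R^3)},
\end{equation*}
which yields the estimate for every $N$ (the limsup being a stylistic choice inherited from Proposition \ref{prop:limsup v_N}). The only subtlety, and the step deserving care, is the uniformity of the Bogovskii and Korn constants, which is handled by rescaling each ball $B(x_i,ra)$ to the unit ball and invoking the standard estimates there; this is precisely the point at which \eqref{assumption::separation_relaxed_1} rather than \eqref{assumption::separation} suffices, since only non-overlapping of $\{B(x_i,ra)\}$ is needed, not a quantitative lower bound in terms of $N^{-1/3}$.
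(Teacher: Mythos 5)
Your proposal is correct and is essentially the paper's own argument: the paper simply observes that the proof of Proposition \ref{prop:limsup v_N} carries over unchanged, because the Bogovskii/Korn construction only requires the inflated balls around the particles to be pairwise disjoint, which \eqref{assumption::separation_relaxed_1} guarantees. One small fix: since \eqref{assumption::separation_relaxed_1} only gives $M>2\beta>2$ (not $M>4$), the inflation factor should be taken in $(1,M/2)$ rather than $(2,M/2)$, and the disjointness then holds for every $\lambda$ without any smallness assumption; the rescaled Bogovskii and Korn constants depend only on this fixed ratio, so the rest of your argument is unaffected.
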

 The next result replaces Proposition \ref{prop_borne_uniforme_v_N} in the analysis. The proof must be modified to take into account the new separation assumptions. 
 \begin{proposition}
For all $N >0$ and $q \geq 2$,
\begin{align*}
            ||D(v_N^1)||_{L^2(\R^3)} \leq C \big(\lambda^{\frac{3}{2}} \eta^{-3}  \:+\:   \lambda^{\frac{1}{6}} \eta^{1+\frac{\alpha}{2}-\frac{\alpha}{q}} \:+\: \lambda^{\frac{3}{2} - \frac{1}{q}} \eta^{-3+\frac{\alpha}{2}+\frac{3}{q}-\frac{\alpha}{q}}  \big).
\end{align*}
\label{prop_borne_uniforme_v_N section 6}
\end{proposition}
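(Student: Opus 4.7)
The plan is to follow the variational strategy used in the proof of Proposition \ref{prop_borne_uniforme_v_N}, while tracking the dependence on the new parameter $\eta$. The Bogovskii-type argument of \cite[Proposition 2.1]{gerard2020correction} remains valid under \eqref{assumption::separation_relaxed_1} since the balls $B_i$ are pairwise disjoint, so we obtain
$$\|D(v_N^1)\|_{L^2(\R^3)}^2 \leq C \sum_{i=1}^N \|D(h_i^1)\|_{L^2(B_i)}^2.$$
I would then use the Taylor expansion \eqref{taylor_expansion_v[p]} to decompose $D(h_i^1)$ into a Stokeslet doublet sum $D(\mathcal{S}_i)$ and a remainder sum $D(\mathcal{R}_i)$, and estimate each contribution separately.

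For the Stokeslet doublet sum, I would reuse the disjoint balls convolution decomposition $\mathcal{S}_i = A_1^i + A_2^i + A_3^i$ from Proposition \ref{prop_borne_uniforme_v_N}, but with the smaller scale $d_N := \eta N^{-1/3}/4$. Since every $j \in G_\eta$ is $\eta N^{-1/3}$-separated from all other particles, the balls $(B(x_j,d_N))_{j \in G_\eta}$ are disjoint, so the auxiliary function $l := \sum_{j \in G_\eta} C(\rp_j)|B(x_j,d_N)|^{-1}\mathbf{1}_{B(x_j,d_N)}$ satisfies $\|l\|_{L^2}^2 \lesssim N^2\eta^{-3}$. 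Combining this with the Calder\'on--Zygmund continuity of $\mathcal{M}$ and tracking the factors $a^9/d_N^3 \sim \lambda^3/(N^2\eta^3)$ in the $A_3$ piece (the same scaling arises for $A_1$, $A_2$ via the generalized Young inequality \eqref{inequality::generalized_Young} applied with $b_j = |C(\rp_j)|$ and the bound $\sup_i \sum_{j \in G_\eta} |y_i-y_j|^{-4} \leq C\eta^{-4}$), one obtains $\sqrt{\mathcal{S}} \leq C\lambda^{3/2}\eta^{-3}$, which matches the first term of the claim. The Taylor remainder $\mathcal{R}$ can then be bounded by the same $\eta$-separated variant of the proof of Proposition \ref{prop_borne_uniforme_v_N}, and one checks that the resulting contribution is dominated by $\lambda^{3/2}\eta^{-3}$ as long as $\eta \geq \lambda^{1/3}$.

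The remaining two terms arise from a more delicate treatment in which one splits $\sum_i = \sum_{i\in G_\eta} + \sum_{i\in B_\eta}$ and exploits assumption \eqref{assumption::separation_relaxed_2} through the cardinality bound $|B_\eta| \leq C\eta^\alpha N$. For indices $i \in B_\eta$ the balls $B(x_i,d_N)$ may overlap with neighbouring ones, so the Calder\'on--Zygmund estimate must be replaced by a H\"older-type bound at an exponent $q \geq 2$ and its conjugate $q'$: the factor $\eta^{\alpha/2}$ arises from $|B_\eta|^{1/2}$, the factor $\eta^{\pm\alpha/q}$ from evaluating the $L^q$ norm of the remainder convolution restricted to overlapping balls, and the power $\eta^{3/q}$ from the rescaling of a single ball volume to the exponent $1/q-1/2$. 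The main obstacle is precisely this $A_3$-type analysis in the non-disjoint regime: one must interpolate between the uniform bound $\|l\|_{L^\infty} \lesssim N\eta^{-3}$ and the above $L^2$ bound, while correctly pairing each bad index with a H\"older inequality that costs $\eta^{\alpha/q}$. Summing the three contributions yields the claimed estimate.
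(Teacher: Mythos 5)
Your treatment of the first term is sound and essentially the paper's argument in a slightly more elementary form: the variational bound, the Taylor splitting into Stokeslet doublet plus remainder, and the convolution decomposition at scale $d_\eta=\tfrac{\eta}{4}N^{-1/3}$ with the bound \eqref{lemme ine 1} do give a contribution $\lambda^{3}\eta^{-6}$ (hence $\lambda^{3/2}\eta^{-3}$ after square root), and your domination of the remainder under $\eta\geq\lambda^{1/3}$ is consistent with the eventual choice $\eta=\lambda^{\theta}$, $\theta<\tfrac13$. But note that the disjointness you invoke is the one that matters for the \emph{source} balls $B(x_j,d_\eta)$, $j\in G_\eta$; in the $A_3$-type step the disjointness needed is that of the \emph{averaging} balls $B(x_i,d_\eta)$ indexed by the centers $i$, and the boundary data of $v_N^1$ lives on all particles, good and bad. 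So your second paragraph only covers the centers $i\in G_\eta$, and everything hinges on what you do for $i\in B_\eta$.

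That is precisely where your proposal has a genuine gap: the second and third terms of the stated estimate, which carry the exponents in $\alpha$ and $q$, are never actually derived. You correctly identify that one must split $\sum_i=\sum_{i\in G_\eta}+\sum_{i\in B_\eta}$ and use the cardinality bound from \eqref{assumption::separation_relaxed_2}, but the mechanism you sketch (an interpolation of $l$ between $L^\infty$ and $L^2$, plus a per-index H\"older ``pairing'' costing $\eta^{\alpha/q}$) is not developed, and your bookkeeping of the exponents (e.g. ``$\eta^{\alpha/2}$ from $|B_\eta|^{1/2}$'') does not match a verifiable computation. The paper's resolution of the obstacle you flag is different and concrete: (i) the factor $\eta^{\alpha(1-2/q)}$ comes from a single H\"older step on the volume prefactor, $\big(\sum_{i\in B_\eta}|B_i|\big)^{1-2/q}\leq C(\lambda\eta^{\alpha})^{1-2/q}$, after reducing each $\|D(h_i^1)\|_{L^2(B_i)}^2$ to an $L^q(B_i)$ norm; and (ii) for bad centers the convolution decomposition is redone at the \emph{minimal separation} scale $d_M=\tfrac{Ma}{4}$, at which the averaging balls are pairwise disjoint for all indices by \eqref{assumption::separation_relaxed_1}, while the source patches keep radius $d_\eta$ and the lattice sum is controlled at scale $\tilde M\lambda^{1/3}$ via \eqref{lemme ine 4}; Calder\'on--Zygmund continuity is then applied in $L^q$, not $L^2$. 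This mixed-scale, $L^q$-based treatment is what produces $\mS_2\leq C(\lambda^{1-q/3}\eta^{q}+\lambda^{q}\eta^{3-3q})$ and hence the terms $\lambda^{1/6}\eta^{1+\alpha/2-\alpha/q}$ and $\lambda^{3/2-1/q}\eta^{-3+\alpha/2+3/q-\alpha/q}$; your proposal neither supplies this step nor an alternative argument that demonstrably yields those exponents, so as written the proof is incomplete exactly on the part that distinguishes this proposition from Proposition \ref{prop_borne_uniforme_v_N}.
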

We will need the following Lemma, proved in the dedicated annex section.
\begin{lemma}
Using the notation $y_i = x_i N^{\frac{1}{3}}$, we denote $\tilde{M} = \frac{3 M}{4 \pi}$  such that assumptions \eqref{assumption::separation_relaxed_1}-\eqref{assumption::separation_relaxed_2} read
\begin{align*}
     \min_{i \neq j} |y_i - y_j| & \geq \tilde{M} \lambda^{\frac{1}{3}}\\
    \min_{i \in G_\eta, \, j \neq i} |y_i - y_j| & \geq \eta.
\end{align*}
There exists a constant $C>0$ such that the following inequalities hold
\begin{align}
     \sup_{i \in G_\eta} \sum_{j \in G_\eta} (\eta + |y_i-y_j|)^{-4} & \leq C \eta^{-4} \label{lemme ine 1}\\
     \sup_{i} \sum_{j} (\tilde{M} \lambda^{\frac{1}{3}} + |y_i-y_j|)^{-4} & \leq C \lambda^{-\frac{4}{3}}. \label{lemme ine 4}
\end{align}
\label{lemma comparaison série intégrale}
\end{lemma}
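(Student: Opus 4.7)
Both inequalities follow the same template: compare the discrete sum to a continuous integral via a disjoint ball packing around the points $y_j$. I will describe the argument for the first inequality; the second is identical after replacing $\eta$ by $\tilde M \lambda^{1/3}$ and $G_\eta$ by $\{1,\dots,N\}$.

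The key geometric observation is that for $j_1 \neq j_2$ in $G_\eta$ one has $|y_{j_1} - y_{j_2}| \geq \eta$ (by definition of $G_\eta$, since at least one of them is good), so the balls $B(y_j, \eta/2)$ for $j \in G_\eta$ are pairwise disjoint. The function $z \mapsto (\eta + |y_i - z|)^{-4}$ is quasi-constant on each such ball: for $z \in B(y_j, \eta/2)$ the triangle inequality yields $|y_i - z| \in [\,|y_i - y_j| - \eta/2,\; |y_i - y_j| + \eta/2\,]$, so
$$\tfrac{1}{2}(\eta + |y_i - y_j|) \leq \eta + |y_i - z| \leq \tfrac{3}{2}(\eta + |y_i - y_j|),$$
which gives $(\eta + |y_i - y_j|)^{-4} \leq 16\,(\eta + |y_i - z|)^{-4}$ throughout $B(y_j, \eta/2)$. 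Integrating this over $B(y_j, \eta/2)$ and dividing by its volume $C\eta^3$, then summing over $j \in G_\eta$ and using the disjointness, I obtain
$$\sum_{j \in G_\eta} (\eta + |y_i - y_j|)^{-4} \leq \frac{C}{\eta^3} \int_{\cup_j B(y_j,\eta/2)} (\eta + |y_i - z|)^{-4}\,\mathrm{d}z \leq \frac{C}{\eta^3} \int_{\R^3} (\eta + |y_i - z|)^{-4}\,\mathrm{d}z.$$
By translation invariance and the change of variables $z = y_i + \eta t$, the integral equals $\eta^{-1} \int_{\R^3} (1 + |t|)^{-4}\,\mathrm{d}t$, a finite universal constant. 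Hence $\sum_{j \in G_\eta} (\eta + |y_i - y_j|)^{-4} \leq C \eta^{-4}$ uniformly in $i \in G_\eta$, which is the first inequality.

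The second inequality follows by the same argument, now using the full separation assumption \eqref{assumption::separation_relaxed_1}, namely $|y_i - y_j| \geq \tilde M \lambda^{1/3}$ for all $i \neq j$, which guarantees pairwise disjointness of $B(y_j, \tilde M \lambda^{1/3}/2)$ for \emph{all} $1 \leq j \leq N$. Repeating the quasi-constancy estimate with $\tilde M \lambda^{1/3}$ in place of $\eta$ yields $\sup_i \sum_j (\tilde M \lambda^{1/3} + |y_i - y_j|)^{-4} \leq C (\tilde M \lambda^{1/3})^{-4} = C \lambda^{-4/3}$. The proof is entirely elementary: there is no real obstacle, only the bookkeeping of the comparison between the discrete sum and the integral, and the verification that the exponent $4$ is large enough for $\int_{\R^3}(1+|t|)^{-4}\,\mathrm{d}t$ to converge (in dimension three, $4 > 3$).
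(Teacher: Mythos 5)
Your proof is correct and follows essentially the same route as the paper: both compare the discrete sum to an integral by packing disjoint balls of radius half the separation distance around the $y_j$ and using a pointwise domination of $(\eta+|y_i-y_j|)^{-4}$ on each ball, yielding the $\eta^{-3}\cdot\eta^{-1}$ (resp. $\lambda^{-1}\cdot\lambda^{-1/3}$) scaling. The only cosmetic difference is that you use a two-sided quasi-constancy bound and a scaling change of variables over all of $\R^3$, while the paper integrates the one-sided bound over $\R^3\setminus B(y_i,\eta/2)$ and computes the radial integral directly.
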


\begin{proof}[Proof of Proposition \ref{prop_borne_uniforme_v_N section 6}]
The first steps of the proof are the same as in Proposition \ref{prop_borne_uniforme_v_N} and thanks to a variational argument we get $ ||D(v_N^1)||_{L^2(\R^3)}^2 \leq  \sum_{i} ||D(h_i^1)||_{L^2(B_i)}^2 $. Through Hölder inequality and the Taylor expansion \eqref{taylor expansion D(h_i)}, we write
\begin{align*}
       & \sum_{i} ||D(h_i^1)||_{L^2(B_i)}^2 = \sum_{i \in G_\eta} ||D(h_i^1)||_{L^2(B_i)}^2 + \sum_{i \in B_\eta} ||D(h_i^1)||_{L^2(B_i)}^2\\
    & \leq  C    \Big( \sum_{i \in G_\eta} |B_i|\Big)^{1-\frac{2}{q}} \: \Big( \sum_{i \in G_\eta}  \int_{B_i} \Big| a^3 \sum_{j \neq i \atop j \in G_\eta} \M(x-x_j) C(\rp_j) \Big|^q  \d x \Big)^{\frac{2}{q}} \black \\
    & + C  \Big( \sum_{i \in B_\eta} |B_i|\Big)^{1-\frac{2}{q}} \: \Big( \sum_{i \in B_\eta}  \int_{B_i} \Big| a^3 \sum_{j \neq i \atop j \in G_\eta} \M(x-x_j) C(\rp_j) \Big|^q  \d x \Big)^{\frac{2}{q}}  \\
    & +C  \Big( \sum_{i \in G_\eta} |B_i|\Big)^{1-\frac{2}{q}} \: \Big( \sum_{i \in G_\eta} \int_{B_i} \Big| \sum_{j \neq i \atop j \in G_\eta} D(R[\rp_j])(x-x_j)\Big|^q  \d x \Big)^{\frac{2}{q}}\\
    & + +C  \Big( \sum_{i \in B_\eta} |B_i|\Big)^{1-\frac{2}{q}} \: \Big( \sum_{i \in B_\eta} \int_{B_i} \Big| \sum_{j \neq i \atop j \in G_\eta} D(R[\rp_j])(x-x_j)\Big|^q  \d x \Big)^{\frac{2}{q}}\\
    & \leq C \lambda^{1-\frac{2}{q}}(  \mS_1^{\frac{2}{q}} \black + \eta^{\alpha - \frac{2 \alpha}{q}} \mS_2^{\frac{2}{q}}) + C \lambda^{1-\frac{2}{q}}(\mR_1^{\frac{2}{q}} + \eta^{\alpha - \frac{2 \alpha}{q}} \mR_2^{\frac{2}{q}}) 
\end{align*}
To bound the quantity $\mS_1:= \sum_{i \in G_\eta}  \int_{B_i} \Big| a^3 \sum_{j \neq i \atop j \in G_\eta} \M(x-x_j) C(\rp_j) \Big|^q  \d x $, we introduce
\begin{align*}
    d_\eta:= \frac{\eta}{4} N^{-\frac{1}{3}} \leq \frac{1}{4} \, \underset{i \neq j \atop i \in G_\eta}{\min} \, |x_i - x_j|
\end{align*}
and the function $l_G$ defined by
\begin{align*}
    l_G:=  \sum_{j \in G_\eta} C(\rp_j) |B(x_j,d_\eta)|^{-1} 1_{B(x_j,d_\eta)}.
\end{align*}
For any $i \in G_\eta$ and any $x \in B_i$, we have the decomposition
\begin{align*}
    a^3 \sum_{j \neq i \atop j \in G_\eta} \M(x-x_j) C(\rp_j) & = a^3 \int_{\R^3 \setminus B(x_i,2 d_\eta)} (\M(x-T(y)) - \M(x-y)) l_G(y) \, \d y\\
    & + a^3 \dashint_{B(x_i,d_\eta) } \int_{\R^3 \setminus B(x_i,2 d_\eta)}  (\M(x-y) - \M(z-y)) l_G(y) \, \d y \d z\\
    & + a^3 \dashint_{ B(x_i,d_\eta) } \int_{\R^3 \setminus B(x_i,2 d_\eta)}  \M(z-y) l_G(y) \, \d y \d z \black \\
    & := A^1_{i}(x) + A^2_{i}(x) +  A^3_{i} .
\end{align*}
We start with the analysis of $ A^1_{i}(x)$ and we wish to compute $ \sum_{i \in G_\eta} ||A^1_{i}||_{L^q(B_i)}^q$. For $y \in B(x_j,d_\eta)$ with $j \in G_\eta, \ j \neq i$, we write
\begin{align*}
    |\M(x-x_j) - \M(x-y)| \leq C |y-x_j| \sup_{\bar{z} \in [x_j,y]} |\bar{z}-x|^{-4} \leq C d_\eta (|x_i-x_j|- d_\eta - a)^{-4},
\end{align*}
and using assumptions \eqref{assumption::separation_relaxed_1}-\eqref{assumption::separation_relaxed_2} yields
\begin{align*}
     (|x_i-x_j|- d_\eta - a)^{-4} \leq |x_i-x_j|^{-4} \Big(\frac{3}{4}-\frac{1}{M}\Big)^{-4} \leq C|x_i-x_j|^{-4}
\end{align*}
as long as $M>\frac{4}{3}$. Therefore, we obtain, using that $|y_i-y_j| \geq \frac{1}{2} (|y_i-y_j| + \eta )$ for $i \neq j $ in $G_\eta$
\begin{align*}
    \sum_{i \in G_\eta} ||A^1_{i}||_{L^q(B_i)}^q & \leq  C  a^{3q} \sum_{i \in G_\eta} \int_{B_i} \Big| \sum_{j \neq i, \, j \in G_\eta}  \frac{d_\eta |C(\rp_j)|}{|x_i-x_j|^4} \Big|^q \, \mathrm{d}x\\
    & \leq  C  a^{3 + 3q} \eta^q N^{-\frac{q}{3}} N^{\frac{4q}{3}} \sum_{i \in G_\eta} \Big| \sum_{j \neq i, \, j \in G_\eta}  (\eta + |y_i-y_j|)^{-4} |C(\rp_j)| \Big|^q\\
    & \leq C  \frac{\lambda^{q+1}}{N} \eta^{q} \Big( \sup_{i \in G_\eta} \sum_{j \in G_\eta} (\eta+|y_i-y_j|)^{-4} \Big)^q  \sum_{i \in G_\eta} |C(\rp_i)|^q\\
    & \leq C \lambda^{q+1} \eta^{-3q}
\end{align*}
where we used inequality \eqref{lemme ine 1}.
The analysis of $A^2_{i}$ is very similar and yields the same estimate.  To deal with $A^3_{i}$, we introduce the following notation:
\begin{align*}
    l_G^{i}= 
\begin{cases}
    C(\rp_i) |B(x_i,d_\eta)|^{-1} 1_{B(x_i,d_\eta)}  \quad & \text{if} \quad i \in G_\eta ,\\
    0             &\text{otherwise}
\end{cases}.
\end{align*}
For all $z \in B(x_i, d_\eta)$, we can write
\begin{align*}
    \int_{\R^3 \setminus B(x_i,2 d_\eta)}  \M(z-y) l_G(y) \, \d y = (\M \star (l_G-l_G^{i}))(z)
\end{align*}
and we subsequently obtain
\begin{align*}
    \sum_{i \in G_\eta}||A^3_{i}||_{L^q(B_i)}^q & = C \frac{a^{3 + 3q}}{d_\eta^{3q}} \sum_{i \in G_\eta} \Big(\int_{B(x_i,d_\eta)} (\M \star (l_G-l_G^i))(z) \, \d z \Big)^q\\
    & \leq C \frac{a^{3 + 3q}}{d_\eta^{3q}} \sum_{i \in G_\eta}|| \M  \star (l_G-l_G^i)||_{L^q(B(x_i,d_\eta))}^{q} |B(x_i,d_\eta)|^{q-1}\\
    & \leq C  \frac{a^{3 + 3q}}{d_\eta^{3}} \sum_{i \in G_\eta} || \M  \star (l_G-l_G^i)||_{L^q(B(x_i,d_\eta))}^{q}\\
    & \leq  C  \frac{\lambda^{q+1}}{N^{q+1}} \eta^{-3} \big( ||\M  \star l_G||_{L^q(\R^3)}^{q} + \sum_{i \in G_\eta} || \M \star l_G^{i}||_{L^q( \R^3)}^{q} \big)
\end{align*}
where we used that the family of balls $(B(x_i,d_\eta))_{i \in G_\eta}$ is pair-wise disjoint thanks to \eqref{assumption::separation_relaxed_2}. Since $\M$ is a Calderon-Zyngmund operator, we have
\begin{align*}
    \sum_{i \in G_\eta}||A^3_{i}||_{L^q(B_i)}^q  \leq \frac{\lambda^{q+1}}{N^{q}} \eta^{-3} \big( \, ||l_G||_{L^q( \R^3)}^{q} + \sum_{i \in G_\eta}||l_G^i||_{L^q( \R^3)}^{q} \big).
\end{align*}
    It remains to compute 
    \begin{align*}
  \sum_{i \in G_\eta}||l_G^i||_{L^q( \R^3)}^{q}  = ||l_G||_{L^q(\R^3)}^{q} & = \int_{\R^3} \Big| \sum_{j \in G_\eta} C(\rp_j) |B(x_j,d_\eta)|^{-1} 1_{B(x_j,d_\eta)} \Big|^q \, \mathrm{d}x\\
   & = |B(x_j,d_\eta)|^{-q} \sum_{j \in G_\eta} \int_{B(x_j,d_\eta)} |C(\rp_j)|^q \, \d x\\
   & \leq C \eta^{3 - 3 q} N^{q} 
\end{align*}
 which implies $\sum_{i}||A^3_{i}||_{L^q(B_i)}^q \leq C \lambda^{q+1} \eta^{-3q} $. Altogether, we proved that $\mS_1 \leq C \lambda^{q+1} \eta^{-3q}$.\\

\noindent To estimate $\mS_2:=\sum_{i \in B_\eta}  \int_{B_i} \Big| a^3 \sum_{j \neq i \atop j \in G_\eta} \M(x-x_j) C(\rp_j) \Big|^q  \d x $, we introduce the following notation
\begin{align*}
    d_M:= \frac{M a}{4} \leq \frac{1}{4} \, \underset{i \neq j}{\min} \, |x_i-x_j|.
\end{align*}
We use a similar decomposition for any $i \in B_\eta$ and $x \in B_i$: 
\begin{align*}
    a^3 \sum_{j \in G_\eta} \M(x-x_j) C(\rp_j) &  = a^3 \int_{\R^3} (\M(x-T(y)) - \M(x-y)) l_G(y) \, \d y\\
    & + a^3 \dashint_{B(x_i,d_M) } \int_{\R^3}  (\M(x-y) - \M(z-y)) l_G(y) \, \d y \d z\\
    & + a^3 \dashint_{ B(x_i,d_M) } \int_{\R^3 }  \M(z-y) l_G(y) \, \d y \d z\\
    & := D^1_{i}(x) + D^2_{i}(x) + D^3_{i}.
\end{align*}
We first deal with $D^1_{i}$. For all $y \in B(x_j,d_\eta)$ with $j \in G_\eta$, we have $|\M(x-x_j) - \M(x-y)| \leq C d_\eta |x_i - x_j|^{4}$, thus
\begin{align*}
    \sum_{i \in B_\eta} ||D^1_{i}||_{L^q(B_i)}^q &  \leq a^{3 + 3q} d_\eta^{q} N^{\frac{4q}{3}} \sum_{i \in B_\eta} \Big| \sum_{j \in G_\eta} (\tilde{M} \lambda^{\frac{1}{3}} + |y_i-y_j|)^{-4} |C(\rp_j)| \Big|^q\\
    & \leq \frac{\lambda^{q+1}}{N} \eta^{q} \Big( \sup_{i } \sum_{j} (\tilde{M} \lambda^{\frac{1}{3}}+ |y_i-y_j|)^{-4} \Big)^q \sum_{i \in G_\eta} |C(\rp_i)|^q\\
    & \leq C \lambda^{1-\frac{q}{3}} \eta^{q} \quad \text{using} \quad \eqref{lemme ine 4}.
\end{align*}
The same estimate holds for $\sum_{i \in B_\eta} ||D^2_{i}||_{L^q(B_i)}^q$.  We now turn to the contribution from $D^3_{i}$ and we write
\begin{align*}
     \sum_{i \in B_\eta} ||D^3_{i}||_{L^q(B_i)}^q &  \leq C \frac{a^{3 + 3q}}{d_M^{3q}} \sum_{i \in B_\eta} \Big( \int_{B(x_i,d_M)} (\M \star l_G )(z) \, \d z \Big)^q\\
     & \leq C \frac{a^{3 + 3q}}{d_M^3} \sum_{i \in B_\eta} || \M \star l_G||_{L^q(B(x_i,d_M))}^q\\
     & \leq C a^{3q} ||l_G||_{L^q(\R^3)}^q\\
     & \leq C \lambda^{q} \eta^{3-3q}.
\end{align*}
Gathering all the results together, there holds $\mS_2 \leq C(\lambda^{1-\frac{q}{3}} \eta^{q}  + \lambda^q \eta^{-3q+3})$. Comparable computations yield similar estimates for the remainder terms $\mR_1$ and $\mR_2$ and we obtain
\begin{align*}
          &  ||D(v_N^1)||_{L^2(\R^3)}^2  \leq C \lambda^{1-\frac{2}{q}} \: (\mS_1^{\frac{2}{q}} \: + \: \eta^{ \alpha - \frac{2 \alpha}{q}} \mS_2^{\frac{2}{q}})  \:+ \: C \lambda^{1-\frac{2}{q}}(\mR_1^{\frac{2}{q}} \: + \: \eta^{\alpha-\frac{2 \alpha}{q}} \mR_2^{\frac{2}{q}}) \\
           & \leq C \big(\lambda^3 \eta^{-6} \: + \:+\: \lambda^{\frac{1}{3}} \eta^{ 2+ \alpha-\frac{2 \alpha}{q} +2} \:+\: \lambda^{3 - \frac{2}{q}} \eta^{-6 + \alpha +\frac{6}{q} - \frac{2 \alpha}{q}} \big).
\end{align*}
\end{proof}
\subsubsection{Analysis of $v_N^2$}

Before studying $v_N^2$, we recall a few properties on an elementary straining-flow problem which is reminiscent of passive particles analysis. For a trace-free symmetric matrix $S$, we denote $w[S]$ the solution of
\begin{align*}
    - \mu \Delta w[S] + \na p[S] & = 0  && \text{in} \quad \R^3 \setminus B(0,a)\\
     \div w[S] & = 0  && \text{in} \quad \R^3 \setminus B(0,a)\\
     w[S] & = -S x  && \text{on} \quad \partial B(0,a)
\end{align*}
which is explicit \cite{guazzelli2011physical} and given by 
\begin{align*}
    w[S](x)& = \: - \frac{5}{2} S : (x \otimes x) \frac{a^3 x}{|x|^5} \: - \: S x \frac{a^5}{|x|^5} \:+ \: \frac{5}{2} (S : x \otimes x) \frac{a^5 x}{|x|^7}\\
    & = \: 5  \frac{\lambda}{N} \mu \nabla U(x) S \: + \: R[S](x)
\end{align*}
with $|R[S](x)| = \mathcal{O}(a^5 |x|^{-4})$. Note that this expansion has a structure similar to the expansion \eqref{taylor_expansion_v[p]} of $v[\rp]$. Precisely, we write using the linearity of $S \mapsto w[S]$:
\begin{align*}
    v[\rp](x) - \frac{1}{5} w[C(\rp)](x)= \tilde{R}[\rp](x)
\end{align*}
where $\tilde{R}[\rp]=R[\rp] - \frac{1}{5}R[C(\rp)]$ is a remainder satisfying 
$$|\tilde{R}[\rp](x)|= \mathcal{O} \big(  a^4 (|x|^{-3} \: + \:|x-a \beta \rp|^{-3}\:+\: |x-a \beta^{-1} \rp|^{-3})\:+ \: a^5 |x|^{-4} \big).$$
Inspired from this decomposition, we split the boundary condition of the field $v_N^2$ on the ball $B_i$ through   $$h_i^2(x) := \sum_{j \neq i} v[\rp_j](x-x_j) = \frac{1}{5}\sum_{j \neq i} w[C_{\rp_j}](x-x_j) + \sum_{j \neq i} \tilde{R}[\rp_j](x-x_j) \quad \text{for} \quad x \in B_i.$$
Subsequently, we write $v_N^2 :=  \: \tilde{v}_N^2 \: + \:  \psi_N$ where $\tilde{v}_N^2$ is a solution to 
\begin{align*}
    St(\tilde{v}_N^2) = 0 \quad \text{in} \quad \Omega_N, \quad D(\tilde{v}_N^2)(x) = - \frac{1}{5}D \big( \sum_{j \neq i \atop j \in B_\eta} w[C(\rp_j)](x-x_j) \big) \quad \text{in} \quad B_i
\end{align*}
and $\psi_N$ solves 
\begin{align*}
    St(\psi_N) = 0 \quad \text{in} \quad \Omega_N, \quad D(\psi_N)(x) = - D \big( \sum_{j \neq i \atop j \in B_\eta }\tilde{R}[\rp_j](x-x_j) \big) \quad \text{in} \quad B_i.
\end{align*}
The control of $\psi_N$ is obtained through
\begin{proposition}
For all $q \geq 2$ and all divergence-free $\phi$ in $C^\infty_0(\R^3)$, 
\begin{align*}
    \underset{N \rightarrow + \infty}{\limsup} \, \Big| 2 \mu \int_{\R^3} D(\psi_N):D(\phi) \Big| \leq C \lambda^{1-\frac{1}{q}} \eta^{\frac{\alpha}{2}}||D(\phi)||_{L^q(\R^3)}.
\end{align*}
\end{proposition}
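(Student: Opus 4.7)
The strategy mirrors Propositions \ref{prop:limsup v_N section 6} and \ref{prop_borne_uniforme_v_N section 6}: a Bogovskii duality argument reduces the claim to a bound on $||D(\psi_N)||_{L^2(\R^3)}$, after which a variational argument using the decay of $\tilde R[\rp]$ and the cardinality bound $|B_\eta| \leq C \eta^\alpha N$ produces the missing $\eta^{\alpha/2}$. The key point is that the restriction of the sum defining the boundary data of $\psi_N$ to indices $j \in B_\eta$ is exactly what generates this factor.

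Since $\psi_N$ solves a Stokes equation in $\Omega_N$ with the force and torque balances inherited from $v_N^2$, the Bogovskii extension argument from Proposition \ref{prop:limsup v_N section 6} carries over verbatim thanks to \eqref{assumption::separation_relaxed_1}: for any divergence-free $\phi \in C_0^\infty(\R^3)$, one produces a divergence-free $\tilde\phi$ supported in the pairwise disjoint balls $B(x_i, 2a)$, and finds
\begin{equation*}
\Bigl|\, 2\mu \int_{\R^3} D(\psi_N) : D(\phi) \,\Bigr| \leq C\, ||D(\psi_N)||_{L^2(\R^3)}\, ||D(\phi)||_{L^2(\cup_i B_i)} \leq C \lambda^{\frac{1}{2}-\frac{1}{q}}\, ||D(\psi_N)||_{L^2(\R^3)}\, ||D(\phi)||_{L^q(\R^3)},
\end{equation*}
the last inequality being Hölder on the set $\cup_i B_i$ of volume $O(\lambda)$. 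It remains to show $||D(\psi_N)||_{L^2(\R^3)} \leq C \lambda^{1/2} \eta^{\alpha/2}$.

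The variational characterization of $\psi_N$ (as in the proof of Proposition \ref{prop_borne_uniforme_v_N}) yields
\begin{equation*}
||D(\psi_N)||_{L^2(\R^3)}^2 \leq C \sum_{i=1}^N ||D(\tilde h_i)||_{L^2(B_i)}^2, \qquad \tilde h_i(x) := \sum_{j \neq i,\, j \in B_\eta} \tilde R[\rp_j](x-x_j).
\end{equation*}
From the decay $|D \tilde R[\rp](x)| = O(a^4 |x|^{-4} + a^5 |x|^{-5})$ (with analogous bounds near the shifted singularities, all of comparable magnitude on $B_i$ by \eqref{assumption::separation_relaxed_1}), the rescaling $y_k := N^{1/3} x_k$, and the generalized Young inequality \eqref{inequality::generalized_Young} applied with the sequence $b_j = \mathbf{1}_{B_\eta}(j)$, I bound
\begin{equation*}
\sum_i ||D(\tilde h_i)||_{L^2(B_i)}^2 \leq C\, a^{11} N^{8/3} \Bigl( \sup_i \sum_j (\tilde M \lambda^{1/3} + |y_i - y_j|)^{-4} \Bigr)^2 |B_\eta| \leq C\, \lambda^{11/3} \cdot \lambda^{-8/3} \eta^\alpha = C \lambda\, \eta^\alpha,
\end{equation*}
where the inner supremum is controlled by \eqref{lemme ine 4} and $|B_\eta| \leq C \eta^\alpha N$ is \eqref{assumption::separation_relaxed_2}. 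Combining with the duality estimate above gives the announced inequality.

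The only delicate point is ensuring that the power of $\eta$ is exactly $\eta^{\alpha/2}$: the indicator $\mathbf{1}_{B_\eta}$ has to occupy the $b_j$-slot of Young's inequality so that the weight of \eqref{assumption::separation_relaxed_2} is captured linearly as $|B_\eta|$, while the two outer suprema over the kernel $(\tilde M \lambda^{1/3} + |y_i - y_j|)^{-4}$ remain global and are controlled only through the minimal separation $\tilde M \lambda^{1/3}$. The decay $|D \tilde R[\rp](x)| = O(a^4 |x|^{-4})$ is precisely the borderline decay matched by the dilute separation, and the higher-order term $a^5 |x|^{-5}$ contributes at the same order on $B_i$, so neither improving nor worsening the estimate.
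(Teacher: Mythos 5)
Your proposal is correct and follows essentially the same scheme as the paper's (sketched) proof: a Bogovskii duality step reducing the claim to $||D(\psi_N)||_{L^2(\R^3)}\leq C\lambda^{\frac12}\eta^{\frac{\alpha}{2}}$, then the minimization representation and the estimate $\sum_i ||D(\tilde h_i)||_{L^2(B_i)}^2\leq C\lambda\eta^{\alpha}$ of the boundary data. Your Schur-test computation with $b_j=\mathbf{1}_{B_\eta}(j)$, the kernel $(\tilde M\lambda^{1/3}+|y_i-y_j|)^{-4}$ controlled via \eqref{lemme ine 4}, and the cardinality bound \eqref{assumption::separation_relaxed_2} is precisely the ``similar analysis'' the paper leaves implicit, and the exponents check out.
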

\begin{proof}
We only explain the proof scheme. The first step is to apply a Bogovskii argument as we did in Proposition \ref{prop:limsup v_N}, it is then necessary to estimate the norm of $D(\psi_N)$ in $L^2(\R^3)$. As the problem solved by $\psi_N$ admits a simple minimization representation, we only need to control the data inside the balls, that is $\sum_{i} ||D \big( \sum_{j \neq i \atop j \in B_\eta }\tilde{R}[\rp_j](\cdot-x_j) \big) ||_{L^2(B_i)}^2$. Terms of this nature were already encountered, and a similar analysis yields 
\begin{align*}
    \sum_{i} ||D \big( \sum_{j \neq i \atop j \in B_\eta }\tilde{R}[\rp_j](\cdot-x_j) \big) ||_{L^2(B_i)}^2 \leq C \lambda \eta^\alpha.
\end{align*}
\end{proof}
In order to study $\tilde{v}_N^2$, we introduce the following sum of elementary solutions which relates again to the Method of Reflections
\begin{align*}
    \phi_N^{app}:= \frac{1}{5}\sum_{i \in B_\eta} w[C(\rp_i)](x-x_i)
\end{align*}
and we verify the following separation $\tilde{v}_N^2 = \phi_N - \phi_N^{app}$ where $\phi_N$ is a solution to 
\begin{align*}
     St(\phi_N) = 0 \quad \text{in} \quad \Omega_N, \quad D(\phi_N) = \begin{cases}
         & \frac{1}{5} C(\rp_i) \quad \text{in} \quad B_i, \quad \text{if} \quad i \in B_\eta\\
         & 0 \quad \text{in} \quad B_i, \quad \text{otherwise}
     \end{cases}.
\end{align*}
The analysis of $\phi_N$ is standard. Using a Bogovskii argument as in the above proof, we only need to control $\sum_{i \in B_\eta} ||C(\rp_i)||_{L^2(B_i)}^2$ and we obtain
\begin{proposition}
For all $q \geq 2$ and all divergence-free $\phi$ in $C^\infty_0(\R^3)$, 
\begin{align*}
    \underset{N \rightarrow + \infty}{\limsup} \, \Big| 2 \mu \int_{\R^3} D(\phi_N):D(\phi) \Big| \leq C  \lambda^{1-\frac{1}{q}} \eta^{\frac{\alpha}{2}}||D(\phi)||_{L^q(\R^3)}.
\end{align*}
\end{proposition}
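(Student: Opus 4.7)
The plan is to mimic very closely the Bogovskii--duality argument used in the proof of Proposition \ref{prop:limsup v_N} (and of its analogue for $\psi_N$), reducing everything to a control of $\|D(\phi_N)\|_{L^2(\R^3)}$, and then to exploit the variational characterisation of $\phi_N$ together with the counting estimate $\sharp B_\eta \leq C N \eta^\alpha$ provided by \eqref{assumption::separation_relaxed_2}.

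First I would integrate by parts to write, for any divergence-free $\phi \in C_0^\infty(\R^3)$,
\begin{align*}
2\mu \int_{\R^3} D(\phi_N):D(\phi)
 & = 2\mu \int_{\Omega_N} D(\phi_N):D(\phi) + \frac{2\mu}{5} \sum_{i \in B_\eta}\int_{B_i} C(\rp_i):D(\phi)\\
 & = -\sum_i \int_{\partial B_i} \sigma(\phi_N,\pi_N)n\cdot \phi + \frac{2\mu}{5} \sum_{i \in B_\eta}\int_{B_i} C(\rp_i):D(\phi),
\end{align*}
using that $D(\phi_N)=0$ on $B_i$ for $i \notin B_\eta$ and $D(\phi_N)=\tfrac{1}{5}C(\rp_i)$ (constant) on $B_i$ for $i \in B_\eta$. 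Since $\phi_N$ is the minimizer of $\int |D(u)|^2$ among divergence-free fields with prescribed strain inside each $B_i$, its Euler--Lagrange equations yield force and torque balance on each ball, namely $\int_{\partial B_i}\sigma(\phi_N,\pi_N)n = 0$ and $\int_{\partial B_i}\sigma(\phi_N,\pi_N)n\times(x-x_i)=0$. This is exactly what enables the Bogovskii replacement: one constructs $\tilde\phi = \sum_i \tilde\phi_i$ with $\tilde\phi_i \in H^1_0(B(x_i,2a))$ divergence-free, equal to $\phi + u_i + v_i\times(x-x_i)$ on $B_i$, and satisfying the Korn-type bound $\|\nabla\tilde\phi\|_{L^2(\R^3)}^2 \leq C \|D(\phi)\|_{L^2(\cup_i B_i)}^2$. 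The balance conditions let me replace $\phi$ by $\tilde\phi$ in the boundary integrals, and reverting the integration by parts gives
\begin{equation*}
2\mu \int_{\R^3} D(\phi_N):D(\phi) = 2\mu \int_{\R^3} D(\phi_N):D(\tilde\phi).
\end{equation*}

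The second step is the $L^2$ estimate of $D(\phi_N)$. Testing the variational problem against the obvious admissible competitor supported in $\cup_{i \in B_\eta} B_i$ and vanishing elsewhere, combined with standard extension arguments, gives the bound
\begin{equation*}
\|D(\phi_N)\|_{L^2(\R^3)}^2 \leq C \sum_{i \in B_\eta} \|C(\rp_i)\|_{L^2(B_i)}^2 \leq C a^3 \sharp B_\eta,
\end{equation*}
where I used that $|C(\rp_i)| \leq C$ uniformly in $i$ thanks to $\rp_i \in \mathbb{S}^2$. Invoking now \eqref{assumption::separation_relaxed_2} to bound $\sharp B_\eta \leq C N \eta^\alpha$ and using $a^3 N = \tfrac{3\lambda}{4\pi}$, this yields $\|D(\phi_N)\|_{L^2(\R^3)} \leq C \lambda^{1/2} \eta^{\alpha/2}$.

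Finally, Cauchy--Schwarz and Hölder on the union of balls produce the announced estimate: by the Bogovskii bound and $|\cup_i B_i| \leq C Na^3 = C\lambda$,
\begin{equation*}
\|\nabla \tilde\phi\|_{L^2(\R^3)} \leq C \|D(\phi)\|_{L^2(\cup_i B_i)} \leq C |\cup_i B_i|^{\frac{1}{2}-\frac{1}{q}} \|D(\phi)\|_{L^q(\R^3)} \leq C \lambda^{\frac{1}{2}-\frac{1}{q}} \|D(\phi)\|_{L^q(\R^3)},
\end{equation*}
and combining with the control of $\|D(\phi_N)\|_{L^2}$ gives
\begin{equation*}
\Big| 2\mu \int_{\R^3} D(\phi_N):D(\phi) \Big| \leq C \lambda^{\frac{1}{2}}\eta^{\frac{\alpha}{2}} \cdot \lambda^{\frac{1}{2}-\frac{1}{q}} \|D(\phi)\|_{L^q(\R^3)} = C \lambda^{1-\frac{1}{q}} \eta^{\frac{\alpha}{2}} \|D(\phi)\|_{L^q(\R^3)}.
\end{equation*}
The only point requiring care is the justification that the Euler--Lagrange conditions indeed produce the force/torque balance needed for Bogovskii, even though the prescribed strain $\tfrac{1}{5}C(\rp_i)$ inside bad balls is not a rigid motion; this is however a direct consequence of varying within admissible rigid perturbations, which leave the strain constraint unchanged.
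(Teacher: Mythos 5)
Your proposal is correct and follows essentially the same route the paper sketches: a Bogovskii replacement (licensed by the zero net force and torque coming from the variational characterisation of $\phi_N$) reducing the pairing to $\|D(\phi_N)\|_{L^2(\R^3)}\,\|D(\phi)\|_{L^2(\cup_i B_i)}$, then the variational bound $\|D(\phi_N)\|_{L^2(\R^3)}^2 \leq C\sum_{i\in B_\eta}\|C(\rp_i)\|_{L^2(B_i)}^2 \leq C\lambda\eta^{\alpha}$ via $\sharp B_\eta\leq C\eta^{\alpha}N$, and Hölder giving the factor $\lambda^{\frac12-\frac1q}$. The exponent bookkeeping $\lambda^{\frac12}\eta^{\frac{\alpha}{2}}\cdot\lambda^{\frac12-\frac1q}=\lambda^{1-\frac1q}\eta^{\frac{\alpha}{2}}$ matches the stated estimate, so no gap beyond the standard extension details you already flag.
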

The last contribution of $\phi_N^{app}$ was already studied in \cite{GVRH} and yields
\begin{proposition}
For all $q \geq 2$ and all divergence-free $\phi$ in $C^\infty_0(\R^3)$, 
\begin{align*}
    \underset{N \rightarrow + \infty}{\limsup} \, \Big| 2 \mu \int_{\R^3} D(\phi_N^{app}):D(\phi) \Big| \leq C (\lambda \eta^\alpha )^{\frac{q-1}{q}} ||D(\phi)||_{L^q(\R^3)}.
\end{align*}
\end{proposition}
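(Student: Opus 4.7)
The plan is to follow the stresslet duality argument from the passive analysis of \cite{GVRH}, which applies here with essentially no change since $\phi_N^{app}$ is a sum of sphere-stresslet Stokes flows supported at the bad particles. The starting point is to extend $w[C(\rp_i)](\cdot - x_i)$ inside $B(x_i,a)$ by the linear function $-C(\rp_i)(x-x_i)$; this preserves the boundary trace and yields a divergence-free field in $\dot{W}^{1,r}_{loc}(\R^3)$ for $r<3$, whose symmetric gradient equals $-C(\rp_i)$ inside the ball and which satisfies the homogeneous Stokes equations outside.

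First, for each $i \in B_\eta$, I would integrate by parts separately on $B(x_i,a)$ and on $\R^3 \setminus B(x_i,a)$ to obtain
\begin{align*}
    2\mu \int_{\R^3} D\bigl(w[C(\rp_i)](\cdot - x_i)\bigr) : D(\phi)
    = -\!\int_{\partial B(x_i, a)}\!\! [\sigma(w[C(\rp_i)], p[C(\rp_i)]) n]^+(\cdot - x_i) \cdot \phi \, \mathrm{d}s - 2\mu \int_{B(x_i, a)}\!\! C(\rp_i) : D(\phi).
\end{align*}
I would then Taylor-expand $\phi$ around $x_i$ on $\partial B(x_i,a)$ and invoke the two standard identities for a rigid sphere in a pure straining flow: the net force $\int_{\partial B(x_i,a)} [\sigma n]^+ \mathrm{d}s$ vanishes, while the symmetric first moment is the classical stresslet, proportional to $\mu a^3 C(\rp_i)$. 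Summing over $i \in B_\eta$, and checking that the $O(a^2\|\nabla^2 \phi\|_{L^\infty})$ Taylor remainders contribute $O(a^2 \lambda) \to 0$ in the $\limsup$ as $N \to \infty$, the expression reduces to
\begin{align*}
    2\mu \int_{\R^3} D(\phi_N^{app}) : D(\phi) = C_0 \, \mu \sum_{i \in B_\eta} a^3 \, C(\rp_i) : \nabla \phi(x_i) + o(1)
\end{align*}
for an explicit stresslet constant $C_0$.

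The final step is a Hölder estimate on this discrete sum with conjugate exponents $q$ and $q'=q/(q-1)$. Since $|C(\rp)| \leq C$ is uniform on $\mS^2$ and $Na^3 \sim \lambda$, assumption \eqref{assumption::separation_relaxed_2} yields
\begin{align*}
    \sum_{i \in B_\eta} a^3 \,\leq\, C\, \tfrac{\lambda}{N}\, \sharp B_\eta \,\leq\, C\lambda \eta^\alpha,
\end{align*}
while the empirical measure convergence \eqref{assumption::empirical_measure_spatial} combined with $\rho \in L^\infty$ provides
\begin{align*}
    \sum_{i=1}^N a^3 |\nabla \phi(x_i)|^q \,\longrightarrow\, C \lambda \int_{\R^3} \rho(x) |\nabla \phi(x)|^q \, \mathrm{d}x \,\leq\, C \lambda\, \|\nabla \phi\|_{L^q(\R^3)}^q.
\end{align*}
Applying Hölder to $\sum_{i \in B_\eta} a^3 |C(\rp_i)||\nabla \phi(x_i)|$ then produces a bound of order $(\lambda \eta^\alpha)^{(q-1)/q} \lambda^{1/q} \|\nabla \phi\|_{L^q}$, which is sharper than the claimed $(\lambda \eta^\alpha)^{(q-1)/q} \|D(\phi)\|_{L^q}$ since $\lambda^{1/q} \leq 1$ in the dilute regime. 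The main technical point is to justify that the small-scale Taylor remainders (which a priori involve $\|\nabla^2 \phi\|_{L^\infty}$) vanish in the $\limsup$, which is why the result is stated for $C^\infty_0$ test functions before being extended to $\dot H^1 \cap \dot W^{1,q}$ by density, in the same spirit as in the proof of Proposition \ref{convergence u_app^N}.
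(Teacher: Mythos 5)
Your argument is correct, but it follows a genuinely different route from the paper's. The paper's proof (recalled from \cite{GVRH}) rests on an \emph{exact} distributional identity: since each $w[C(\rp_i)](\cdot-x_i)$, extended inside the ball by the linear field, solves $-\mu \Delta \phi_N^{app}+\nabla \pi_N^{app}=-\div\big(\mu\sum_{i\in B_\eta}C(\rp_i)1_{B_i}\big)$ up to surface terms that are invisible to divergence-free test functions, testing immediately gives $2\mu\int_{\R^3}D(\phi_N^{app}):D(\phi)=\mu\sum_{i\in B_\eta}\int_{B_i}C(\rp_i):D(\phi)$ with no Taylor expansion and no passage to the limit; a single Hölder inequality over the pairwise disjoint balls, combined with $\sharp B_\eta\leq C\eta^\alpha N$ from \eqref{assumption::separation_relaxed_2}, then yields the bound $C(\lambda\eta^\alpha)^{\frac{q-1}{q}}\|D(\phi)\|_{L^q(\R^3)}$ uniformly in $N$ (the limsup is not even needed). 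You instead argue particle by particle with integration by parts, the force-free property, the stresslet identity and a Taylor expansion of $\phi$, reducing the pairing to the Riemann-type sum $C_0\mu\sum_{i\in B_\eta}a^3\,C(\rp_i):\nabla\phi(x_i)+o(1)$, and then conclude via \eqref{assumption::empirical_measure_spatial}, \eqref{assumption::separation_relaxed_2} and a discrete Hölder inequality. This buys a slightly sharper bound, with an extra factor $\lambda^{1/q}$ (at the price of a constant depending on $\|\rho\|_{L^\infty}$), but it costs you the $N\to\infty$ limit (your remainders involve $\|\nabla^2\phi\|_{L^\infty}$, so the limsup and the restriction to $C^\infty_0$ are genuinely used, whereas the paper's estimate holds for every $N$), and your per-particle remainder is in fact $O(\mu\lambda a\,\|\nabla^2\phi\|_{L^\infty})$ in total rather than $O(a^2\lambda)$ --- still vanishing, so harmless. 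Two small points to tidy up: since $C(\rp_i)$ is symmetric you can write $C(\rp_i):\nabla\phi(x_i)=C(\rp_i):D(\phi)(x_i)$ and run the Hölder step directly on $D(\phi)$, which avoids having to invoke a Korn/Riesz-transform inequality to convert $\|\nabla\phi\|_{L^q}$ into $\|D(\phi)\|_{L^q}$; and note that the interior contribution $-2\mu\int_{B_i}C(\rp_i):D(\phi)$ is of the same order $a^3$ as the stresslet term, so it must be absorbed into your constant $C_0$, as your final formula implicitly does.
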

\begin{proof}
For the sake of completeness, we remind here the argument employed in \cite{GVRH}. Following some tedious computations, $\phi_N^{app}$ satisfies the following equation in the sense of distributions
\begin{align*}
    - \mu \Delta \phi_N^{app} + \nabla \pi_N^{app} = \mu S x s^1  = - \, \div \big(  \mu \sum_{i \in B_\eta} C(\rp_i) 1_{B_i} \big) \quad \text{in} \quad \R^3.
\end{align*}
Testing this equation against a divergence-free smooth field $\phi$ yields 
\begin{align*}
    2 \mu \int_{\R^3} D(\phi_N^{app}):D(\phi) = \mu \sum_{i \in B_\eta}\int_{B_i} C(\rp_i) : D(\phi)
\end{align*}
and applying Hölder inequality gives
\begin{align*}
    \Big|  5 \mu \sum_{i \in B_\eta}\int_{B_i} D(\phi):C(\rp_i) \Big| &  \leq C \Big( \sum_{i \in B_\eta} \int_{B_i} |D \phi(x)|^q  \, \d x\Big)^{\frac{1}{q}} \Big( \sum_{i \in B_\eta} \int_{B_i} |C(\rp_i)|^{\frac{q}{q-1}} \Big)^{\frac{q-1}{q}}\\
    & \leq C (\lambda \eta^\alpha)^{\frac{q-1}{q}}||D(\phi)||_{L^q(\R^3)} \quad \text{owing to \eqref{assumption::empirical_measure}}.
\end{align*}
\end{proof}

\subsubsection{Proof of Theorem \ref{main_theorem relaxed duality}}

To sum up the different splittings we used on $v_N$, we wrote $v_N = \: v_N^1 \:+\: \psi_N \:+\: \phi_N \:- \:\phi_N^
{app}$ and we analysed separately each contribution. All in all, we proved that for any smooth divergence-free functions $\phi$, we have for any $q \geq 2$
\begin{multline}
        \Big|2 \mu \int_{\R^3} D(v_N) : D(\phi) \Big| \leq C \big( \lambda^{2-\frac{1}{q}} \eta^{-3}  \:+\: \lambda^{\frac{2}{3}-\frac{1}{q}} \eta^{1+\frac{\alpha}{2}- \frac{\alpha}{q}} \:+\: \lambda^{2-\frac{2}{q}} \eta^{-3+\frac{\alpha}{2}+\frac{3}{q}-\frac{\alpha}{q}} \\
        +\: \lambda^{1-\frac{1}{q}} \eta^{\frac{\alpha}{2}}  \:+ \: (\lambda \eta^\alpha)^{\frac{q-1}{q}} \big) ||D(\phi)||_{L^q(\R^3)}.
    \label{inegalité conclusion dualité}
\end{multline}
We can now turn to the proof of Theorem \ref{main_theorem relaxed duality}.
\begin{proof}[Proof of Theorem  \ref{main_theorem relaxed duality}]

As the sequence $u_N= \: u_N^{\frak{p}} \:+\: u_N^{app}\:+\:v_N$ is bounded in $L^p_{loc}(\R^3)$ for $1<p<\frac{3}{3}$, there exists an accumulation point $u_\lambda$ of $(u_N)$ in $L^p_{loc}(\R^3)$. Up to an extraction $N_k$, there exists $u_\lambda^{\frak{p}}$ an accumulation point of $(u_N)$ in $\dot{H}^1(\R^3)$ such that 
\begin{align*}
    u_\lambda = \:u_\lambda^{\frak{p}} \:+ \: w_0 \:+\: \underset{k \rightarrow + \infty}{\lim} \, v_{N_k},
\end{align*}
where $w_0$ is the limit of $u_N^{app}$ in $L^p_{loc}(\R^3)$, see Proposition \ref{convergence u_app^N}.\\

\noindent Following \textit{mutatis mutandis} the proof of Theorem \ref{main_theorem}, but replacing Theorem \ref{thm::DGV_Mecherbet} with Theorem \ref{theoreme DGV Höfer ordre 1},  we obtain the following variational formulation verified by $u_\lambda$ for any divergence-free $\phi$ in $C^\infty_0(\R^3)$
\begin{align*}
     2  \mu \int_{\R^3} (1+\frac{5}{2} \rho \lambda) D(u_\lambda):D(\phi) = \int_{\R^3} g \cdot \phi + \lambda \int_{\R^3}  \sigma_1 : D(\phi) + \langle \mR_\lambda| \phi \rangle
\end{align*}
and we only need to analyse the following remainder 
\begin{multline*}
   \langle \mR_\lambda | \phi \rangle  :=\langle \mR_\lambda^p | \phi \rangle + 2  \mu \int_{\R^3} (\frac{5}{2} \rho \lambda )  D(w_0): D(\phi) \\
   + \underset{k \rightarrow + \infty}{\lim} \, 2 \mu \int_{\R^3} (\frac{5}{2} \rho \lambda )  D(v_{N_k}): D(\phi) + \underset{k \rightarrow + \infty}{\lim} \, 2 \mu\int_{\R^3} D(v_{N_k}): D(\phi).
\end{multline*}
It was already verified (see \eqref{inequality w_0 in main theorem}) that 
\begin{align*}
    \Big| 2 \mu \int_{\R^3} (\frac{5}{2} \rho \lambda)  D(w_0): D(\phi) \Big| \leq C \lambda^{2} ||D(\phi)||_{L^q(\R^3)}.
\end{align*}
For any $q \geq q_{min}^{\frak{p}}$, we have $|\langle \mR_\lambda^p | \phi \rangle| \leq C \lambda^{1+\delta^{\frak{p}}}$ where $q_{min}^{\frak{p}}, \ \delta^{\frak{p}}$ and $\eta$ are specified in Remark \ref{remarque exposents}. We know from \eqref{inegalité conclusion dualité} that
\begin{align*}
    \Big| \underset{k \rightarrow + \infty}{\lim} \, 2 \mu \int_{\R^3} D(v_{N_k}): D(\phi)\Big| \leq C \sum_{k=1}^5 C_k(\lambda) ||D(\phi)||_{L^q(\R^3)}
\end{align*}
where 
\begin{align*}
     &C_1(\lambda) := \lambda^{2-\frac{1}{q}} \eta^{-3}, && C_2(\lambda) :=\lambda^{\frac{2}{3}-\frac{1}{q}} \eta^{1+\frac{\alpha}{2} - \frac{\alpha}{q}},\\
    & C_3(\lambda) :=\lambda^{2-\frac{2}{q}} \eta^{-3+\frac{\alpha}{2}+\frac{3}{q}-\frac{\alpha}{q}}   , && C_4(\lambda) := \lambda^{1-\frac{1}{q}} \eta^{\frac{\alpha}{2}} ,   \\
     &C_5(\lambda)  :=  (\lambda \eta^\alpha)^{\frac{q-1}{q}}.
\end{align*}
Of course, a similar estimate with an additional $\lambda$ holds for $2 \mu \int_{\R^3} (\frac{5}{2} \rho \lambda )  D(v_{N_k}): D(\phi)$. 
Choosing $\eta= \lambda^{\theta}$ with $\theta= \frac{2q}{6(q-1)+(q-2) \alpha}$, some tedious computations yields that there exists $q_\alpha$, only depending on $\alpha$, such that for all $q \geq q_\alpha$, one can find $\delta > 0$ such that 
\begin{align*}
    \sum_{k=1}^5 C_k(\lambda) \leq C \lambda^{1+\delta}.
\end{align*}
With the choice $q_{min} = \max(q_\alpha, q_{min}^{\frak{p}})$, we have simultaneously the above inequality as well as the needed passive system estimate $|\langle \mR_\lambda^p | \phi \rangle| \leq C \lambda^{1+\delta^{\frak{p}}} ||D(\phi)||_{L^q(\R^3)}$ . A density argument on the above estimates concludes the proof of the result.
\end{proof}

\section*{Acknowledgements}
I would like to acknowledge my PhD advisor David Gérard-Varet for his confidence and guidance, as well as his scientific suggestions and careful rereadings on this work. I also express my gratitude to Richard Höfer for his comments that greatly improved the manuscript. I acknowledge the support of the DIM Math Innov de la Région Ile-de-France.

\begin{figure}[H]
    \centering
    \includegraphics[scale=0.3]{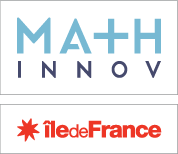}
    \label{fig:my_label}
\end{figure}

\appendix

\section{Proof of Lemma \ref{lemma::construction de v[p]}} 
\label{appendix::lemma_v[p]}
\begin{proof}[Proof of Lemma \ref{lemma::construction de v[p]}]
In order to compute $v[\rp]$ and $p[\rp]$, we use Stokes equations linearity to decompose $v[\rp]:=  v_1 + v_{2}+ v_3$.  The first term is $v_1$, which represents the contribution from the point force source, satisfies in the sense of distributions
 \begin{equation*}
    \left\{
      \begin{aligned}
        - \mu\Delta v_1 + \na p_1 & = - k_f  \delta(x- a\beta\rp) \, \rp &&\ \text{in} \ \R^3 \setminus B,\\
        \div v_1 & = 0 &&\ \text{in} \ \R^3 \setminus B,\\
        v_1 & =0 &&\text{ on } \partial B,\\
        \underset{|x| \rightarrow + \infty}{ \lim} v_1(x) & = 0.
      \end{aligned}
    \right.
\end{equation*}
The solution can be computed using the Oseen tensor and the \textit{Method of Images}. Roughly speaking, this method corresponds to consider a virtual point force inside the particle, the intensity and the position of which is chosen to respect the zero velocity condition on $\partial B$. Using computations from  \cite[Chapter 10]{KK}, we get
\begin{align*}
    v_1(x)  = &   - \: k_f U(x-a\beta \rp) \, \rp \:+\: k_f \gamma_1 U(x-a\beta^{-1} \rp) \, \rp \\
    &  +\: k_f \gamma_2 a \na U(x-a\beta^{-1} \rp) (\rp \otimes \rp- \frac{Id}{3}) \:+\:k_f \gamma_3 a^2 \Delta U(x-a\beta^{-1} \rp) \, \rp,
\end{align*}
with $\gamma_1$, $\gamma_2$ and $\gamma_3$ coefficients depending on the point force proximity through $\beta$ by
\begin{align*}
     \gamma_1 = \frac{3 \beta^{-1}}{2}  - \frac{\beta^{-3} }{2}, \quad \gamma_2 =  \beta^{-2} - \beta^{-4} \quad \text{and} \quad \quad \gamma_3 = \frac{\beta^{-1}}{4}(1-\beta^{-2})^2.
\end{align*}
The associated pressure $p_1$ is computed using Stokes equations linearity and the well-known pressure associated to the \textit{Stokeslet} $U$, the \textit{Stokeslet doublet} $\na U$ and $\Delta U$:
\begin{multline*}
        p_1(x)  = \frac{k_f}{4 \pi} \Big[ \frac{\rp \cdot(x-a \beta \rp)}{|x- a \beta \rp|^3} \:+\: \frac{\rp \cdot(x-a \beta^{-1} \rp)}{|x- a \beta^{-1} \rp|^3}  \\
     + \: 3 \gamma_2 a \frac{(\rp \otimes \rp - \frac{Id}{3}):(x- a \beta^{-1} \rp)\otimes(x- a \beta^{-1} \rp)}{|x- a \beta^{-1} \rp|^5}  \: +3 \: \gamma_3 a^2\frac{\rp \cdot(x-a \beta^{-1} \rp)}{|x- a \beta^{-1} \rp|^5} \Big].
\end{multline*}
Through integration by parts, we can compute the force and torque on the sphere associated to this solution 
\begin{equation}
    \int_{\partial B}\sigma(v_1,p_1)n  = - k_f \gamma_1 \,\rp \quad \text{and} \quad \int_{\partial B}\sigma(v_1,p_1)n \times x     = k_f \gamma_1 \, \rp \times \rp = 0.
    \label{force and torque v_1}
\end{equation}
Note that using tedious computations from \cite[Section 10.2.1]{KK}, one can also compute the following \textit{Stresslet}
\begin{align*}
    \int_{\partial B } [\sigma(v_1,p_1)^+ n](y) \otimes y  \, \d s (y) = k_f \Big( - \frac{5}{2} \beta^{-2} + \frac{3}{2} \beta^{-4} \Big) \, \rp \otimes \rp.
\end{align*} \medskip 

 \noindent The second term $v_{2}$ comes from the particles reaction to the pushing/pulling and solves
 \begin{equation*}
    \left\{
      \begin{aligned}
        - \mu\Delta v_{2} + \na p_2 & = 0  &&\ \text{in} \ \R^3 \setminus B,\\
        \div v_{2} & = 0 &&\ \text{in} \ \R^3 \setminus B,\\
        v_{2} & = U_2 &&\text{ on } \partial B,\\
        \int_{\partial B} (\sigma_{\mu}(v_1,p_1) +\sigma(v_{2},p_2)) n   \, \d s   + k_f \rp & =0 \\
        \underset{|x| \rightarrow + \infty}{ \lim} v_{2}(x) &  = 0.
      \end{aligned}
    \right.
\end{equation*}
Using Stokes law for a sphere translating at speed $U_2$ in a Stokes flow and the force expression \eqref{force and torque v_1}, we get that $\int_{\partial B}\sigma(v_{2},p_2)n = -6 \pi  \mu a U_2 = (\gamma_1-1) k_f \, \rp$ which leads to $U_2=\frac{k_f (1-\gamma_1)}{6 \pi \mu a} \rp$. Note that this velocity direction is consistent with Figure \ref{Geometry 1} and \ref{Geometry 2} as a pusher particle with a negative $k_f$ swims in the direction $- \rp$, whereas a puller one ($k_f>0$) swims in the direction $\rp$. Using computations from \cite{guazzelli2011physical}, we find 
$$v_{2}(x)= k_f (1-\gamma_1) U(x) \, \rp + k_f (1-\gamma_1)\frac{a^2}{|x|^2} \Tilde{U}(x) \, \rp, \quad p_2(x)=  k_f (1-\gamma_1) \frac{ x \cdot \rp}{4 \pi |x|^3},$$
where we remind that $\Tilde{U}(x)= \frac{1}{8 \pi \mu} ( \frac{Id}{3 |x|} - \frac{x \otimes x}{|x|^3})$. The torque $\int_{\partial B}\sigma(v_2,p_2)n \times x$ produced by this solution is also equal to zero. A simple computation shows that the associated Stresslet $\int_{\partial B} \sigma(v_2,p_2)^+ n \otimes y \, \d s(y)$ is zero. \medskip

\noindent Lastly, $v_3$ solves the rotation problem 
 \begin{equation*}
    \left\{
      \begin{aligned}
        - \mu\Delta v_{3} + \na p_3 & = 0  &&\ \text{in} \ \R^3 \setminus B,\\
        \div v_{3} & = 0 &&\ \text{in} \ \R^3 \setminus B,\\
        v_{3}(x) & = V_3 \times x &&\text{ on } \partial B,\\
        \sum_{k=1}^3 \int_{\partial B}\sigma(v_k,p_k) n \times x   \, \d s   & =0 \\
        \underset{|x| \rightarrow + \infty}{ \lim} v_{3}(x) & = 0.
      \end{aligned}
    \right.
\end{equation*}
Since we have  $\int_{\partial B}\sigma(v_1,p_1) n \times x= \int_{\partial B}\sigma(v_2,p_2) n \times x=0$, computations from \cite{guazzelli2011physical} show that rotation speed $V_3$ is zero, which finally gives that $v_3$ and $p_3$ are both zero.
\noindent Altogether, the active contribution boils down to 
\begin{align*}
     v[\rp](x) = & \, v_1(x)+ v_{2}(x) \\
     = & - k_f U(x-a\beta \rp) \, \rp + k_f \gamma_1 U(x-a\beta^{-1} \rp) \, \rp + k_f \gamma_2 a \na U(x-a\beta^{-1} \rp) \big(\rp \otimes \rp- \frac{Id}{3}\big)\\
    &+ k_f (\gamma_1-1) U(x) \, \rp + k_f(\gamma_1-1) \frac{a^2}{|x|^2} \Tilde{U}(x) \, \rp, \quad x \in \R^3 \setminus B.
\end{align*}
 We need to find a good way to extend $v[\rp]$ inside the ball $B$. To do so, notice that 
\begin{equation*}
     v[\rp](x)=v_1(x)+ v_{2}(x)  = U_2= \frac{ k_f (1-\gamma_1)}{6 \pi \mu a}  \, \rp \quad \text{for} \quad x \in \partial B,
\end{equation*}
and we extend $v[\rp]$ by this solid field inside the ball. Note that this extension does not create a jump at the boundary and thus $v[\rp]$ lies in $\dot{W}_{loc}^{1,r}(\R^3)$ with $1 \leq r < 3$ using the jump formula.
\end{proof}

\section{Proof of Lemma \ref{lemma comparaison série intégrale}}
\label{appendix::lemma serie}
\begin{proof}[Proof of Lemma \ref{lemma comparaison série intégrale}]
The proof is similar for the two inequalities. We will only deal with  \eqref{lemme ine 1}. Let $(i,j)$ fixed in   $G_\eta \times G_\eta$. For any $y \in B(y_j, \frac{\eta}{2})$, we have straightforwardly 
\begin{align*}
    \frac{1}{( \eta + |y_i - y_j|)^{4}} \leq \frac{1}{(\frac{\eta}{2}  + |y_i - y|)^{4}}.
\end{align*}
Integrating over $B(y_j, \frac{\eta}{2})$ and summing on the $j$ in $G_\eta$, we obtain
\begin{align*}
    \sum_{j \in G_\eta}   \frac{1}{( \eta + |y_i - y_j|)^{4}} & \leq C \eta^{-3} \sum_{j \in G_\eta} \int_{B(y_j,\frac{\eta}{2})} \frac{1}{(\frac{\eta}{2}+ |y_i - y|)^{4}} \, \d y\\
    & \leq C \eta^{-3} \int_{\R^3 \setminus B(y_i,\frac{\eta}{2})} \frac{1}{(\frac{\eta}{2} + |y_i - y|)^{4}} \, \d y
\end{align*}
using that $\cup_{j \in G_\eta} B(y_j, \frac{\eta}{2}) $ is a disjoint family of balls strictly included in $\R^3 \setminus B(y_i,\frac{\eta}{2})$ since $|y_i - y_j| \geq \eta$ for any $j$ in $G_\eta$. Using a change of variables, we wish to estimate 
\begin{align*}
    \int_{\R^3 \setminus B(0,\frac{\eta}{2})} \frac{1}{(\frac{\eta}{2} + | y|)^{4}} \, \d y & = C \int_{r=\frac{\eta}{2}}^{+ \infty} \frac{r^2}{(\frac{\eta}{2} + r)^{4}} \, \d r\\
    & \leq C \int_{r=0}^{+ \infty} \frac{ (r- \frac{\eta}{2})^2}{r^4} \, \d r \\
    & \leq C \eta^{-1}
\end{align*}
which yields the expected result. 
\end{proof}
{\footnotesize 
\bibliographystyle{siam}

 \bibliography{biblio}
 }
 
 \end{document}